\newtheorem{theorem}{Theorem}[section]
\newtheorem{proposition}[theorem]{Proposition}
\newtheorem{corollary}[theorem]{Corollary}
\theoremstyle{remark}
\newtheorem{remark}[theorem]{Remark}
\newtheorem*{remark*}{Remark}
\theoremstyle{definition}
\newtheorem{definition}[theorem]{Definition}
\newtheorem{example}[theorem]{Example}
\newtheorem{problem}{Problem}
\newcommand{\continuation}{??}
\newenvironment{continueexample}[1]
{\renewcommand{\continuation}{\ref{#1}}\excont[continued]}
{\endexcont}
\newtcolorbox[number within=section, use counter=theorem]{mybox}[2][]{%
	title=Algorithm~\thetcbcounter: #2, 
	#1,
	breakable,
	colback=black!5!white,
	colframe=black
}
\newtcolorbox{mybox2}[2][]{%
	title=#2,
	breakable,
	colback=black!10!white,
	colframe=black
}
\def \r{\mathbb R}
\def \z{\mathbb Z}
\def \Z{\mathbb Z}
\DeclareMathOperator{\conv}{conv} \DeclareMathOperator{\iv}{lV}
\DeclareMathOperator{\isin}{lsin}
\DeclareMathOperator{\lsin}{lsin}
\DeclareMathOperator{\icos}{lcos}
 \DeclareMathOperator{\id}{ld}
\DeclareMathOperator{\il}{l\ell} 
\DeclareMathOperator{\aff}{Aff}
\DeclareMathOperator{\g}{GL} 
\DeclareMathOperator{\GL}{GL} 
\DeclareMathOperator{\sail}{sail} 
\def \({\langle}
\def \){\rangle}
\title{Geometry of multidimensional Farey summation algorithm and frieze patterns}
\author{Oleg Karpenkov and Matty van Son}
\def\keywords{\xdef\@thefnmark{}\@footnotetext}
\begin{document}

	\maketitle
	\keywords{\emph{Keywords:} Farey tessellation, frieze patterns, subtractive algorithms, multidimensional continued fractions, lattice geometry, triangulated polyhedra}
	\keywords{\emph{MSC Class:} 11J70 (Primary) 11H99; 05B45 (Secondary) }
	
	\begin{abstract}
		In this paper we develop a new geometric approach to subtractive continued fraction algorithms in high-dimensions. 
		We adapt a version of Farey summation to the geometric techniques proposed by F.~Klein in 1895.
		More specifically we introduce Farey polyhedra
		and their sails that generalise respectively Klein polyhedra and their sails, and show similar duality properties of the Farey sail integer invariants.
		The construction of Farey sails is based on the multidimensional generalisation of the Farey tessellation provided by a modification of the continued fraction algorithm introduced by R.~W.~J.~Meester. We classify Farey polyhedra in the combinatorial terms of prismatic diagrams.
		Prismatic diagrams extend boat polygons introduced by S.~Morier-Genoud and V.~Ovsienko in the two-dimensional case.
		As one of the applications of the new theory we get a multidimensional version of  Conway-Coxeter frieze patterns. We show that multidimensional frieze patterns satisfy generalised Ptolemy relations.  
	\end{abstract}
	
	\tableofcontents
	
	\vspace{5mm}

	\section{Introduction}

	This paper is dedicated to the study of multidimensional continued fractions for subtractive algorithms based on integer invariants of their {\it sails}. (Sails are polyhedral surfaces associated to continued fractions.)
	We introduce \textit{Farey polyhedra}, generalisations of Klein polyhedra in two-dimensions, and study their properties and combinatorics.
	Finally we define a notion of higher-dimensional frieze patterns, analogous to Conway-Coxeter frieze patterns.
	
	\vspace{2mm}
	
	This section is organised as follows. We start with a brief introduction to the subject areas in Subsection~\ref{subsection: history}.
	We discuss our main contributions and provide an overview of the organisation of the paper in Subsection~\ref{subsection: main contributions}.

	\subsection{History and background}
	\label{subsection: history}
	
	\noindent
	{\bf Multidimensional continued fractions.}
	The question on generalisation of continued fractions to the multidimensional
	case was raised for the first time in 1868 by C.~G.~J.~Jacobi
	(see~\cite{Jacobi1868}). One of the first generalisations of continued fractions
	was proposed by F.~Klein~\cite{Klein1896,Klein1895} in 1895. F.~Klein
	considered the cones in three-dimensional space; he introduced polyhedral
	surfaces that are now called {\it sails} (the boundaries of the convex hulls of all
	integer points inside the cone). Nearly 30 years ago V.~I.~Arnold initiated a
	detailed study of the geometric and combinatoric structures of Klein's polyhedra
	(see e.g. in~\cite{Arnold1998,Arnold2002}]). His suggestion was to examine
	the geometry and combinatorics of continued fractions via integer lattice
	invariants such as integer congruence classes
	of the faces of the sails, their quantities and
	frequencies, integer angles between the faces, integer distances, volumes, and
	so on.
	\vspace{1mm}
	
	H. Tsuchihashi~\cite{Tsuchihashi1983} found the connection between periodic
	multidimensional continued fractions and multidimensional cusp singularities. 
	J.-O. Moussafir
	in~\cite{Moussafir2000} and O. German in~\cite{German2002} studied the
	relationship between the sails of multidimensional continued fractions and Hilbert
	bases. Statistical properties of sails were studied by M.~L.~Kontsevich and Yu.~M.~Suhov in~\cite{Kontsevich1999}. 
	Multidimensional continued fractions appear in rigidity theory~\cite{KMMS2024}
	and toric geometry~\cite{MW2024}. 
	Some examples of the periodic
	multidimensional sails were calculated in the
	papers~\cite{Korkina1996,Korkina1995} by E. Korkina, \cite{Lachaud1993} by G. Lachaud,
	\cite{Bryuno1994, Parusnikov2000} by A.~D.~Bruno and V.~I.~Parusnikov, and the first author~\cite{Karpenkov2004,Karpenkov2006} (see also~\cite{Briggs,karpenkov-book-v2}). For the classical theory of regular continued
	fractions we refer to~\cite{Khinchin1961}.
	\vspace{1mm}
	
	Another famous generalisation of continued fractions was proposed in~\cite{Perron1907,Perron1913} by O.~Perron in 1907.
	O.~Perron invented an algorithm that produces approximations of a fixed direction (similarly to the continued fraction algorithm).
	Later his algorithm was referred to as the {\it Jacobi-Perron algorithm}. 
	The Jacobi-Perron algorithm was further modified to numerous {\it subtractive algorithms}. 
	Here we would like to mention the ordered Jacobi–Perron algorithm (OJPA)~\cite{Podsypanin1977,Hardcastle2000},
	Brun's algorithm~\cite{Brun1958,Berthe2018},
	and the fully subtractive algorithm by F.~Schweiger~\cite{Schw-2000}.
	\vspace{1mm}
	
	Technically speaking, Klein's approach to Jacobi's question works with cones rather then with a single direction, distinct from all subtractive algorithms. 
	For this reason there is no straightforward matching between their properties. 
	It is worth mentioning that in the classical two-dimensional case both Klein's and Perron's theories provide the same continued fractions.

	\vspace{2mm}
	
	\noindent
	{\bf Hermite's problem.}
	For the last 100 years subtractive algorithms and Klein's polyhedra were competing techniques to approach various mathematical problems. Let us consider one of such problems.
	{\it Hermite's problem} of 1848 asks 
	to find a comprehensive description of cubic irrationalities (i.e. the roots of cubic irreducible polynomials over $\mathbb Q$ with integer coefficients) 
	in terms of some eventually periodic sequences. The answer to this question would generalise 
	Lagrange's theorem on the periodicity of regular continued fractions for quadratic irrationalities.
	\vspace{1mm}
	
	Klein polyhedra are doubly-periodic for triples of conjugate vectors, however the constructive combinatorial description of such periods is not yet known. V.~I.~Arnold formulated several famous open problems on the quotient tori decompositions of fundamental domains that lead to the generalisation of the Lagrange's theorem on classical periodicity to the multidimensional case (see, e.g., in~\cite{karpenkov-book-v2}). 
	Partial results in this directions where obtained in papers~\cite{Lachaud1993,German2008}.
	\vspace{1mm}
	
	Contrary to the situation for Klein polyhedra, the description of periods 
	for subtractive algorithms is simple, as it is conjectured that most of the subtractive algorithms  are not necessarily periodic for cubic vectors.
	(For further discussions see, e.g.
	Jacobi's last theorem for the Jacobi-Perron algorithm 
	in Section~27.4 of~\cite{karpenkov-book-v2}.)
	It is only recently that the first periodic subtractive algorithms were constructed~\cite{karpenkov2021,karpenkov2022}.
	It is remarkable that the periodicity proof of the $\sin^2$-algorithm of~\cite{karpenkov2021} (the only subtractive algorithm which is proven to be periodic for cubic irrationalities) substantially involves the periodicity of Klein polyhedra.

	\vspace{2mm}
	
	\noindent
	{\bf Geometry of subtractive algorithms.}
	The first steps in studying the geometry of subtractive algorithms were done 
	in 2001 when T.~Garrity 
	introduced triangle sequences~\cite{Garrity2001}.
	Together with his coauthors he proved several dynamical results and a criterion on convergence in~\cite{Garrity2005}.
	In another paper~\cite{Garrity2004} he studied the Farey partition
	and generalises the Minkovskii ?-function for it. 
	Later in 2008 G.~Panti considered another triangle map in~\cite{Panti2008}. 
	\vspace{1mm}
	
	We aim to develop new techniques of Klein polyhedra adjusted to subtractive algorithms. 
	We pick one particular subtractive algorithm that suits our purposes the best: {\it the Farey summation algorithm}, corresponding to the Farey partition introduced~\cite{Garrity2004}.
	This algorithm is based on the most straightforward generalisation of Farey summation to the higher-dimensional cases. 
	
	\vspace{1mm}
	
	This algorithm has remarkable properties.
	It admits a natural generalisation of the nose stretching algorithm
	(see~\cite{Arnold2002} for the classical nose stretching algorithm) and a remarkable convergence set (studied in detail in~\cite{KM1995}). 
	Note that the dual to the Farey summation algorithm is the Meester algorithm introduced  in 1989 by R.~W.~J~Meester in~\cite{Meester1989}. Namely the Meester algorithm acts as the Gauss map for the Farey summation algorithm. 
	As follows from the main theorem of~\cite{KM1995} (on Meester algorithm), the infinite sequences of cones produced by the Farey summation algorithm  do not converge to a single ray almost everywhere.
	As one of the consequences we get that Jacobi's last theorem does not hold for the Farey summation algorithm (there are cubic irrationalities with non-periodic continued fractions).
	\vspace{1mm}
	
	Informally speaking the Farey summation algorithm is the simplest algorithm from the geometric perspective. This is partially due to its rigid version of nose stretching algorithm, see Subsection~\ref{subsubsection: nose stretching} below.
	Note that the proposed geometric techniques can be generalised for other subtractive algorithms. All but a few of the properties will have straightforward generalisations. 
	
	\vspace{2mm}
	
	\noindent
	\textbf{Frieze patterns and triangulated polygons.}
	In the classical two-dimensional case {\it frieze patterns} are tables of numbers introduced by H.~Coxeter~\cite{Coxeter1970} and studied together with J.~Conway in~\cite{ConwayCoxeter1973}.
	In the latter paper a correspondence between frieze patterns and triangulations of convex polygons was found, which in turn has provided a connection with the more recent study of cluster algebras (see, e.g. in~\cite{moriergenoud2015,Schiffler2012}).
	
	\vspace{1mm}

	The correspondence may be understood in the following two ways. First, counting the number of triangles incident to each vertex in a triangulation provides a sequence that uniquely defines the frieze pattern up to cyclic permutations of the elements in the sequence.
	Second, by embedding the polygon triangulation to the Farey complex in the hyperbolic plane, one associates rational numbers to the vertices of the original polygon.
	There exists an elegant formula connecting rational numbers at pairs of vertices to each element of the frieze pattern corresponding to the triangulation, see the paper~\cite{short2022} by I.~Short for more details.
	Frieze patterns can be thought of as encoding the combinatorics of triangulated polygons.
	
	\vspace{1mm}
	
	In this paper we introduce prismatic diagrams that serve as an important combinatorial invariant of Farey polyhedra. 
	We use the combinatorics of prismatic diagrams to define three-dimensional frieze patterns where each element is associated to a pair of vertices on the prismatic diagram. 
	Similar to the two-dimensional case, three-dimensional frieze patterns admit a version of the Ptolemy relation for pairs of faces in a prismatic diagram.

	\subsection{Main results}
	\label{subsection: main contributions}
	
	In this paper we generalise Klein polyhedra and their sails to higher-dimensional cases and study their properties.
	In particular we introduce frieze patterns in higher-dimensions.
	
	\vspace{2mm}
	
	\noindent
	\textbf{Definition of Farey polyhedra.}
	Our generalisation of Klein polyhedra is provided by the following four basic algorithmic definitions: 
	\begin{itemize}
		\item \textbf{Algorithm~\ref{FareyTess-def}:} first we generate a tessellation of $\z^n$ that is equivalent to the tessellation of the hyperbolic plane by the Farey graph;
		\item \textbf{Algorithm~\ref{definition-farey-summation-algorithm}:} for a ray emanating from the origin we consider the union of simplices intersecting the ray. 
		This is the \textit{Farey polyhedron}, the central object of study, and was first studied by T.~Garrity in~\cite{Garrity2001};
		\item \textbf{Algorithm~\ref{definition-Meester-algorithm}:} the Meester algorithm, introduced by R.~W.~J~Meester in~\cite{Meester1989}, is the subtractive algorithm that defines continued fractions for the Farey polyhedron;
		\item \textbf{Algorithm~\ref{definition-Nose-stretching-algorithm}:} finally the \textit{nose stretching algorithm} provides a reconstruction of the Farey polyhedron from the continued fraction. 
	\end{itemize}
	We use the term \textit{polyhedron} for any dimension greater than~2.
	
	\vspace{2mm}
	
	\noindent
	{\bf Combinatorics of Farey polyhedra, prismatic diagrams, and sails.}
	We introduce a combinatorial description of the boundaries of Farey polyhedra using {\it prismatic diagrams}, special polyhedra with fixed triangulations.
	Prismatic diagrams are complete invariants of Farey polyhedra (see Theorem~\ref{theorem: prismatic diagrams invariant}). They  generalise the combinatorial description of triangulated polygons given by Farey boats, introduced by S.~Morier-Genoud and V.~Ovsienko in~\cite{moriergenoud2019}.
	Prismatic diagrams allow us to generalise several important notions of the geometry of continued fractions to the multidimensional case: sails and LLS sequences, whose elements encode the elements of the Farey summation continued fractions
	(see Definition~\ref{def:LLS-3D} and Theorem~\ref{3D-LLS-invariance}). Prismatic diagrams and their LLS sequences are suitable tools with which to study geometric aspects of the Farey summation algorithm.
	
	\vspace{1mm}

	\vspace{2mm} 
	
	\noindent
	{\bf Generalised frieze patterns.} 
	In 1973 J.~Conway and H.~S.~M.~Coxeter discovered a one-to-one correspondence between triangulated polygons and frieze patterns, where each pair of vertices corresponds to a unique frieze element.
	The defining relation of frieze patterns is then interpreted as a Ptolemy relation on the edges of the corresponding triangulated polygon.
	
	\vspace{1mm}
	
	A natural question to ask is as follows: {\it is there an analogous frieze-like Ptolemy relation for faces of the Farey polyhedron, described by invariants of the prismatic diagram?}
	We answer this question in the affirmative in Theorem~\ref{theorem: ptolemy in 3d}.
	We use newly defined three-dimensional continuants 
	to find $\lambda$\textit{-lengths}, values assigned to pairs of vertices of Farey polyhedra.
	We show that the $3\times3$ matrix of $\lambda$-lengths defined by pairs of faces of the Farey polyhedron have determinant $1$.

	\vspace{2mm}

	\noindent\textbf{Organisation of the paper.}
	We start in Section~\ref{section: construction of all algorithms} with the discussion of Farey tessellation and the construction of the \textit{Farey polyhedra}, the central objects of study in this paper.
	
	\vspace{1mm}
	
	We recall some basic notions of integer geometry in Subsection~\ref{int geo subsection} before defining the Farey tessellation in the integer setting in Subsection~\ref{subsection: farey tessellation}.
	Farey polyhedra are defined by the Farey summation algorithm, described in Subsection~\ref{subsection: farey summation algorithm}.
	Here we also introduce important terminology used throughout the paper.
	A multidimensional continued fraction is then defined from the Farey summation algorithm in Subsection~\ref{subsection: farey summation cf}.
	
	\vspace{1mm}
	
	In Subsection~\ref{subsection: Meester algorithm} we recall the algorithm of R.~W.~J~Meester and note its equivalence to the continued fraction defined by the Farey summation algorithm.
	Finally we recall the situation of convergence of the Meester algorithm in Subsection~\ref{Convergence of the Farey summation algorithm}, and relate these known results to the Farey summation algorithm.
	
	\vspace{2mm}
	
	In Section~\ref{section: properties of farey polyhedra} we study the properties of Farey polyhedra.
	We introduce important invariants of the polyhedra, including the combinatoric prismatic diagrams.
	
	\vspace{1mm}
	We start with Subsection~\ref{subsection: basic properties tessellation}
	in which we discuss the basic properties of the Farey tessellation. 
	Prismatic diagrams are defined in Subsection~\ref{subsection: prismatic triangulation}.
	
	We define generalisations of the important integer invariants, sails and LLS sequences, from prismatic diagrams in Subsection~\ref{Sails}.
	
	\vspace{1mm}

	We study the matrix decomposition of the Farey summation algorithm in Subsection~\ref{subsection: Semi-group of matrices}.
	This is a prerequisite for the study of LLS sequences in Subsection~\ref{LLS sequence and the elements}.
	The matrix decomposition admits a natural generalisation of the notion of \textit{continuants}.
	We discuss this in Subsection~\ref{subsection: continuants}.

	\vspace{1mm}
	In Subsection~\ref{LLS sequence and the elements} we describe the Ptolemy relation for Farey polyhedra, and introduce three-dimensional frieze patterns.
	
	\vspace{2mm}
	
	Finally in Section~\ref{Plans for further work} we mention some open questions in the area.

	\section{Farey tessellation and related algorithms}
	\label{section: construction of all algorithms}
	
	We start this section with a short discussion on integer geometry, which forms the basis of much of the paper, followed by a description of the Farey tessellation.
	Then we define the Farey summation and the Meester algorithms which are closely linked together, and they are both related to the geometric continued fraction that we study.
	We close the section with some short words on the convergence of these algorithms.

	\subsection{A few words on integer geometry}
	\label{int geo subsection}

	We work in affine integer geometry with points and vectors as objects: two points $A$ and $B$ may generate a vector $AB=B-A$.

	Let us fix some integer $n\ge 2$.
	A point in $\r^n$ is said to be {\it integer} if its coordinates are integer. All integer points form the lattice of integer points $\z^n$.
	A vector is \textit{integer} if all of its coordinates are integer.
	A segment, polygon, or polyhedron are {\it integer} if all their vertices are integer.
	
	\textbf{\textit{MAJOR COMMENT: PICTURES MAKE THIS A LOT EASIER TO READ... ADD THEM}}
	
	\vspace{2mm}
	
	We say that an affine transformation is {\it integer} if it preserves the lattice of integer points. The set of affine transformation is denoted by $\aff(n,\z)$. Any integer affine transformation is a composition of a $\GL(n,\z)$ matrix multiplication and a shift on an integer vector.

	\begin{definition}
		Two sets $S_1$ and $S_2$ are {\it integer congruent} if there is a bijective integer affine transformation between $S_1$ and $S_2$.
	\end{definition}
	
	\begin{definition}
		{\it Integer length} of an integer segment is the number of integer points in the interior if this segment plus one. The {\it integer distance} between two integer points is the integer length of the segment connecting them. 
	\end{definition}
	
	We define integer volume in terms of index of sublattices. 
	We refer the reader to~\cite[Section~$2.1.3$]{karpenkov-book-v2} for more details.
	\begin{definition}
		The \textit{integer volume} of a simplex in $\Z^n$ generated by linearly independent integer vectors $V=(v_i)_{i=1}^n$ is the index of the sublattice generated by $V$ in the integer lattice.
		Denote it by $\iv(V)$.    
	\end{definition}

	We will work often with points whose coordinates are relatively prime.
	We introduce the notion of a unit integer circle to describe such points.
	
	\begin{remark}
		\label{remark: lattice volume}
		
		We denote a polyhedron with vertices $V_1,\ldots,V_k$ by $V_1\ldots V_k$. 
		Consider the dimension $k-1$ simplex $E_1\ldots E_k$ made up of standard basis vectors $E_i$.
		Let $E=E_1\oplus\ldots\oplus E_k$.
		From~\cite[Prop.~$18.11$]{karpenkov-book-v2} the dimension $k$ simplex $OE_1\ldots E_k$ has integer volume~$1$.
		To find the integer volume of the simplex $EE_1\ldots E_k$ one must take the greatest common divisor of the Pl\"ucker coordinates for the lattice generated by the vectors $EE_i$ for all $i=1,\ldots,k$, see~\cite[Thm.~$18.30$]{karpenkov-book-v2}.
		The integer volume of $EE_1\ldots E_k$ is $k-1$.
	\end{remark}	
	
	\begin{definition}
		The {\it unit integer sphere $\z S^{n-1}\subset \z^n$ centred at the origin} is the set of all points whose integer distance to the origin equals $1$.
		We call the unit integer sphere $\Z S^1$ the \textit{unit integer circle}.
	\end{definition}
	
	The left hand side of Figure~\ref{figure: unit circle and polygon} shows the points on the unit integer circle within the box focused on the origin of side length~$11$.

	\begin{figure}[t]
		\[
		\includegraphics[width=.4\textwidth]{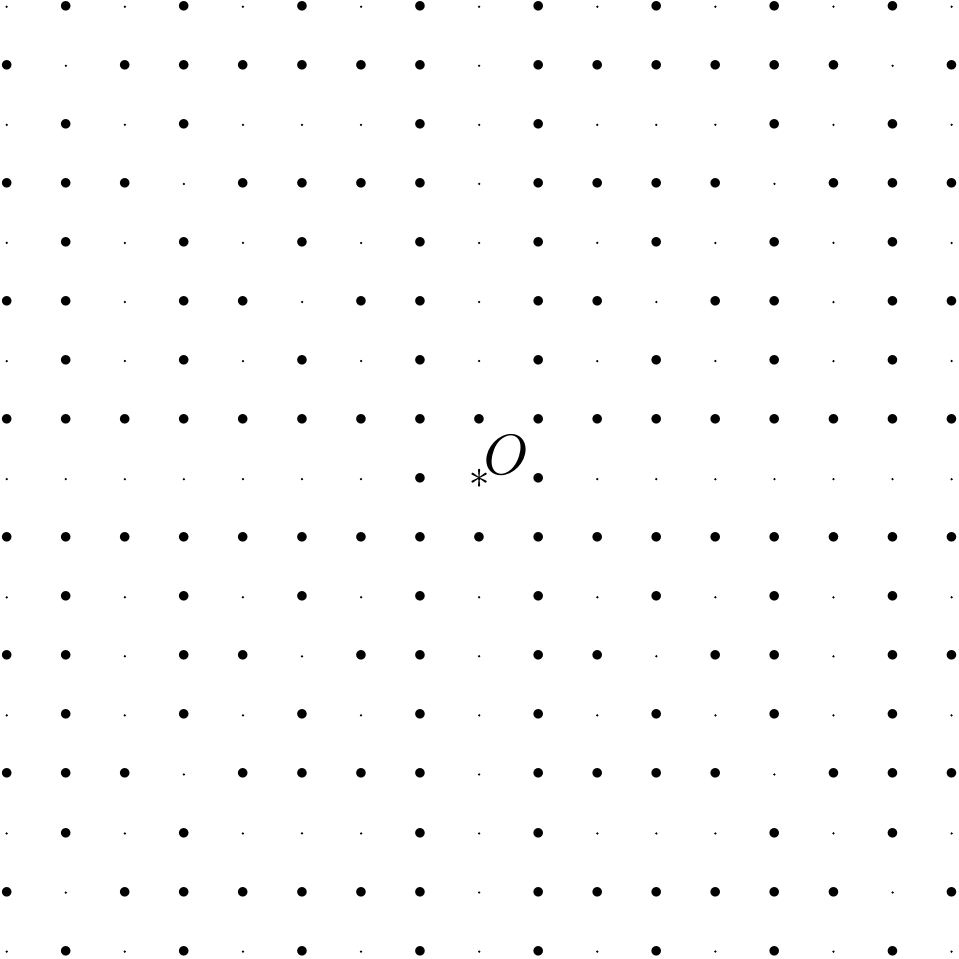}
		\qquad
		\qquad
		\includegraphics[width=.4\textwidth]{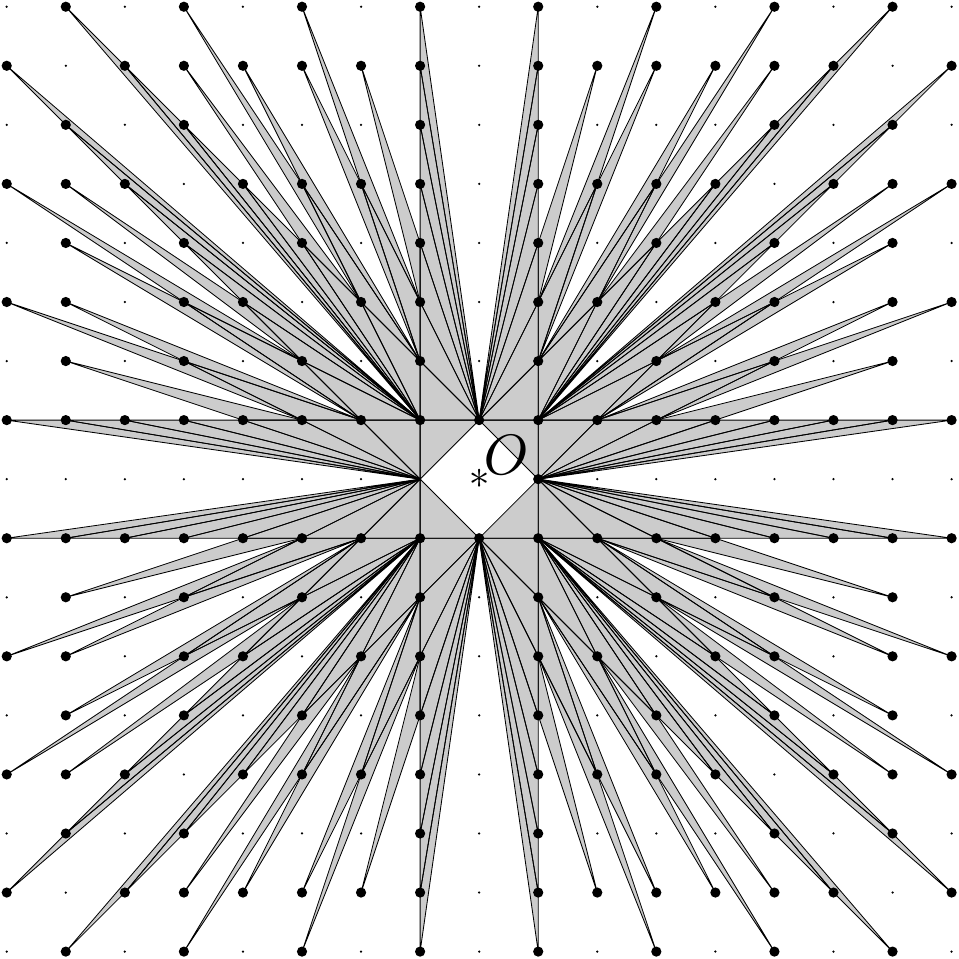}
		\]
		\caption{Unit integer circle (left) and Farey tessellation of the entire lattice (right).}
		\label{figure: unit circle and polygon}
	\end{figure}

	\subsection{Farey tessellation in higher-dimensions}
	\label{subsection: farey tessellation}
	Let us describe the Farey tessellation, recalling first the familiar two-dimensional case.
	
	\subsubsection{Construction of the Farey tessellation}

	Consider two rational numbers $p/q$ and $r/s$ defined by the two pairs of relatively prime numbers $(p, q)$ and $(r,s)$.
	The {\it Farey sum} of these two numbers is defined as follows:
	\[
	\frac{p}{q}\oplus\frac{r}{s}=\frac{p+r}{q+s}.
	\]
	Consider an angle $\angle E_1 O E_2$ 
	and the corresponding coordinate system with basis $\{OE_1,OE_2\}$.
	Let $A=O+(p,q)$, $B=O+(r,s)$, and $C=O+(p+r,q+s)$ in this coordinate system ($O$, $A$, $B$, and $C$ are points, while $(p,q)$ and $(r,s)$ are vectors).
	Then the \textit{Farey summation of $A$ and $B$ is} 
	\[
	A\oplus_O B=C.
	\]
	Let us extend this definition to the multidimensional case in the following way.

	\begin{definition}
		Consider a cone $OE_1,\ldots, OE_n$ 
		and the  corresponding coordinate system.
		Let 
		\[
		A=O+(a_1,\ldots,a_n),\qquad B=O+(b_1,\ldots,b_n),\qquad C=O+(a_1+b_1,\ldots,a_n+b_n),
		\] 
		in this coordinate system.
		Then the \textit{Farey summation of $A$ and $B$ is} 
		\[
		A\oplus_O B=C.
		\]
		This summation is dependent on $O$; however, in this paper we fix $O$, and so we drop the subscript and simply write $A\oplus B$.

	\end{definition}

	Now we are ready to give the major building block of the multidimensional Farey tessellation that we describe later in Algorithm~\ref{FareyTess-def}.
	
	\begin{definition}
		Let $V_1\ldots V_k$ be a simplex in $\z S^n$ with dimension $k\leq n+1$. We say that its {\it Farey pyramid} is the
		simplex $WV_1\ldots V_k$, where $W=V_1\oplus \cdots\oplus V_k$.
		Here $W$ is the vertex of the pyramid and $V_1\ldots V_k$ is its base.
		\\
		We say that a face of the Farey pyramid is {\it side/base} if it contains/does not contain $W$.
		\\
		In the degenerate case $k=1$ the vertex $W$ coincides with the base $V_1$.
		In this case we say that such pyramid does not have side faces.
	\end{definition}
	
	\begin{remark}
		Side faces of the Farey pyramid can have any dimension from $0$ to $k-1$. 
	\end{remark}
	
	Technically we do not require $W$ to be in~$\z S^{n-1}$, however in the below construction it will be always the case.
	
	\begin{mybox}[label=FareyTess-def]{Farey tessellation}
		\noindent\textbf{\underline{Input}:}
		Consider points $O$, $E_1,\ldots,E_n$ such that the vectors $OE_1,\ldots,OE_n$ form a lattice basis.
		Consider the positive orthant in this basis.
		
		\vspace{1mm}
		
		\noindent\textbf{\underline{Base of construction}:}
		Consider the set $S$ of all faces (of all possible dimensions) of the $(n-1)$-dimensional simplex $E_1 \ldots E_n$.  
		
		\vspace{1mm}
		
		\noindent\textbf{\underline{Step of construction}:}
		Let $S$ be the set of faces from the previous step. 
		At this step for every face in $S$ we add all side faces of its Farey pyramid to $S$.
		
		\vspace{2mm}
		
		We iterate the step of construction finite or infinitely many times.
		
		\begin{itemize}
			\item 
			the Farey pyramids of the resulting set $S$ are said to be {\it Farey simplices};

			\item
			the decomposition of the positive orthant into Farey simplices is said to be {\it Farey tessellation} of the positive orthant.
			(For completeness, we add the basis simplex to the tessellation.)

		\end{itemize}
	\end{mybox}

	We may reflect the Farey tessellation around all coordinate axes.
	Such a tessellation is shown on the right hand side of Figure~{\ref{figure: unit circle and polygon}}, which shows the tessellation centred at the origin, within a box of edge length~$17$.
	
	\begin{remark} \label{remark: Farey sum preserves bases}
		Let $E$ be the Farey sum of the standard basis vectors $E=E_1\oplus \ldots \oplus E_n$.
		Note that the collections $E_1,\ldots,E_n$ and $E_1,\ldots,E_{i-1},E,E_{i+1},\ldots,E_n$, for any $i$, are related by a $\g_n(\Z)$ transformation.
		Hence, by induction, the vertices of the base of any Farey simplex is a basis for the $n$-dimensional integer lattice.
	\end{remark}
	
	\begin{remark}
		The central projection of all edges of the Farey tessellation to the plane $x_n=1$, namely
		\[
		(x_1,\ldots ,x_n) \to \Big(\frac{x_1}{x_n}, \ldots, \frac{x_{n-1}}{x_n} \Big),
		\]
		provides a triangulation of the basis simplex given by {\it triangle sequences} of~\cite{Garrity2004}. This triangulation is known as the {\it Farey partition} of the simplex.
	\end{remark}
	
	Let us mention the following quantitative properties of the Farey tessellation.
	
	\begin{proposition}\label{basic-properties-FT}
		\label{int geo prop}
		The following holds:
		
		\begin{itemize}
			\item The integer volume of a Farey simplex of dimension $k$ equals~$k{-}1$ $($with the only exception for the basis one whose integer volume is $1$$)$;

			\item The pyramid with the vertex at the origin and the base in a face of any Farey simplex has a unit integer volume.
		\end{itemize}
		
	\end{proposition}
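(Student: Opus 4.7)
The plan is to prove both bullets by a single induction on the steps of Algorithm~\ref{FareyTess-def}, carrying the following strengthened invariant: \emph{at every step, for every face $F$ of every simplex in the current set $S$, the vertices of $F$ form a $\z$-basis of $\z^n\cap\sspan(F)$}. Once this invariant is in place, the second bullet is immediate, since the integer volume of the pyramid from $O$ on $F$ equals the index $[\z^n\cap\sspan(F):\z\langle F\rangle]$, which the invariant forces to be $1$.

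The base case of the induction is immediate: every face of $E_1\ldots E_n$ has vertex set contained in $\{E_1,\ldots,E_n\}$ and therefore forms a $\z$-basis of its linear span. For the inductive step, suppose the invariant holds before a face $V_1\ldots V_k\in S$ is processed, set $W=V_1\oplus\cdots\oplus V_k$, and consider a newly added side face $WV_{i_1}\ldots V_{i_j}$ with $\{i_1,\ldots,i_j\}\subsetneq\{1,\ldots,k\}$. Introducing $V':=W-V_{i_1}-\cdots-V_{i_j}$, which equals the sum of the complementary $V_l$'s, one has $\z\langle W,V_{i_1},\ldots,V_{i_j}\rangle=\z\langle V_{i_1},\ldots,V_{i_j},V'\rangle$. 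By the induction hypothesis applied to $V_1\ldots V_k$, the tuple $V_1,\ldots,V_k$ is a $\z$-basis of $\z^n\cap\sspan(V_1,\ldots,V_k)$; expanding a general integer vector of $\sspan(V_{i_1},\ldots,V_{i_j},V')$ in this basis then forces all coefficients on $V_{i_1},\ldots,V_{i_j}$ and on $V'$ to be integers. This closes the induction and proves the second bullet.

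The first bullet now follows from a single determinant computation. For a Farey pyramid $WV_1\ldots V_k$ with $k\ge 2$, the invariant applied to the base $V_1\ldots V_k$ lets us take $V_1,\ldots,V_k$ as a $\z$-basis of the ambient integer lattice of their span. In this basis $V_i-W=e_i-\mathbf{1}$, so the edge matrix $[V_1-W,\ldots,V_k-W]$ becomes $I_k-\mathbf{1}\mathbf{1}^{\top}$, whose eigenvalues are $1-k$ (on $\mathbf{1}$) and $1$ (with multiplicity $k-1$). Its determinant therefore has absolute value $k-1$, which is the asserted integer volume. The basis simplex $E_1\ldots E_n$ is treated separately and trivially: its vertices are the standard basis vectors and their differences $E_i-E_1$ generate $\z^n\cap\{x_1+\cdots+x_n=0\}$, so its integer volume equals $1$.

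The only real care required is in the bookkeeping: checking that at every stage the newly added face has linearly independent integer vertices and that ``span of a face'' is consistently taken to be its linear span through the origin, so that the integer-volume interpretation via a lattice index applies uniformly to the origin-based pyramids appearing in both bullets.
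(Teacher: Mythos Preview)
Your proof is correct and follows essentially the same approach as the paper: both reduce to working in the $\z$-basis $V_1,\ldots,V_k$ of the base face, where $W$ becomes $(1,\ldots,1)$ and the volume computations become transparent. The paper compresses the argument into the observation that all Farey pyramids of a given dimension are integer congruent to a single model, whereas you spell out the underlying induction (that every face in $S$ has vertices forming a $\z$-basis of its linear span) and the determinant $\det(I_k-\mathbf{1}\mathbf{1}^\top)=1-k$ explicitly; your version is more rigorous but not materially different.
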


	\begin{proof}[Proof of Proposition~\ref{basic-properties-FT}]
		Let $W,V_1,\ldots, V_k$ be a Farey simplex.
		Consider the integer sublattice whose basis is $OV_1, \ldots, OV_k$.		
		In this basis 
		\[
		W=V_1\oplus \cdots \oplus V_k=(1,\ldots,1,0,\ldots,0)
		\] 
		(here the first $k$ coordinates equal $1$).
		By Remark~\ref{remark: Farey sum preserves bases} the integer sublattice spanned by this basis is the integer lattice itself.
		Hence by Remark~\ref{remark: lattice volume} the integer volume of each Farey simplex of dimension $k$ is $k-1$, and any pyramids whose base is a face of a Farey simplex whose vertex is the origin has integer volume~$1$.
		
	\end{proof}
	
	The following corollary is necessary for the proof of Proposition~{\ref{basic-properties-FT-extended}} below.
	
	\begin{corollary} \label{cor: farey simplex = basis}
		The vertices of any face of a Farey simplex form a basis of the integer lattice.
		\qed
	\end{corollary}

	\subsubsection{Farey and quasi-Farey nets}
	
	Recall the following general definition.
	
	\begin{definition}
		We say that an integer simplex of dimension $n$ in $\z^n$ is {\it unimodular} if the vectors of its edges generate the lattice $\z^n$.
	\end{definition}
	
	Consider a tessellation $T$ of the positive orthant. Let us centrally project it to the coordinate triangle of the plane $x_n=1$. The image of the edges of the tessellation is called the {\it net} of this tessellation.
	
	\begin{definition}
		A net is said to be {\it Farey} if all the simplices in the original tessellation are unimodular. 
	\end{definition}
	
	Note that Farey nets were introduced by A. Hurwitz in~\cite{Hur-1894} and further developed by D.~J.~Grabiner in~\cite{Grab-1992}.   
	
	\vspace{2mm}
	
	In what follows we consider a slightly extended notion of Farey nets.
	
	\vspace{2mm}
	We say that a convex polyhedron with integer vertices is {\it empty} if it does not contain integer points distinct to its vertices (both in the interior and at the boundary).
	
	\begin{definition}
		A net is said to be {\it quasi-Farey} if all the simplices in the original tessellation are empty. 
	\end{definition}
	
	\begin{remark}
		\label{int geo remark}
		In the two-dimensional case all empty triangles are integer congruent to the coordinate one.
		However in higher-dimensions there are infinitely many different types of empty simplices that are integer non-congruent to each other. 
		All empty tetrahedra in dimension three have been classified by G.~K.~White in~\cite{White-1964}. The complete description of empty simplices in dimension greater than 3 is not known.
	\end{remark}

	\begin{proposition}
		The Farey tessellation of the coordinate cone generated by the standard basis vectors generates a quasi-Farey net.
	\end{proposition}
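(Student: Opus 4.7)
The plan is to show that every top-dimensional ($n$-dimensional) tetrahedron of the Farey tessellation is empty. Such a tetrahedron is necessarily a Farey pyramid $\Delta = WV_1\cdots V_n$ with $W = V_1 \oplus \cdots \oplus V_n$, arising from some $(n-1)$-dimensional face $V_1\cdots V_n$ in the set $S$ produced by Algorithm~\ref{FareyTess-def}. So it suffices to prove emptiness for an arbitrary such $\Delta$.

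The key reduction uses Proposition~\ref{basic-properties-FT}. Applied to the base face $V_1\cdots V_n$ of $\Delta$ (viewed as a face of the Farey simplex $\Delta$ itself), it yields that the pyramid $OV_1\cdots V_n$ has unit integer volume, i.e., $|\det(V_1,\ldots,V_n)| = 1$. Hence $V_1,\ldots,V_n$ form a $\z$-basis of $\z^n$, and the transformation $T \in \GL(n,\z)$ sending $V_i \mapsto e_i$ carries $W$ to $\mathbf{1} := (1,\ldots,1)$ and $\Delta$ to the standard Farey pyramid $\Delta_0 := \conv(e_1,\ldots,e_n,\mathbf{1})$. Since $T$ preserves the integer lattice, it is enough to show that $\Delta_0$ is empty.

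For this I would observe that $\Delta_0 \subset [0,1]^n$ by convexity (every vertex of $\Delta_0$ lies in the cube), and the only integer points of $[0,1]^n$ are its $2^n$ corners $\{0,1\}^n$. For a candidate corner $x \in \{0,1\}^n$, write $x = a\mathbf{1} + \sum_i b_i e_i$ with $a, b_i \geq 0$ and $a + \sum_i b_i = 1$; then $x_i = a + b_i$, which forces $a = 0$ as soon as some coordinate of $x$ vanishes. A short case split (depending on whether any $x_i$ equals $0$) then shows that the only cube corners belonging to $\Delta_0$ are $e_1,\ldots,e_n,\mathbf{1}$ themselves, and hence $\Delta_0$ is empty.

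The only conceptually non-routine ingredient is the reduction to the single model $\Delta_0$ using Proposition~\ref{basic-properties-FT}; once this is in place, the final count of cube corners inside $\Delta_0$ is elementary. In particular, the argument shows a strengthening of the statement: every top-dimensional Farey tetrahedron is integer-congruent to the fixed tetrahedron $\Delta_0$, recovering and refining the integer-volume computation $\iv(\Delta) = n-1$ that already appears in Proposition~\ref{basic-properties-FT}.
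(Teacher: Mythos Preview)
Your proof is correct, but it follows a different route from the paper's. The paper argues directly from the second item of Proposition~\ref{basic-properties-FT}: for a Farey tetrahedron $T$, every pyramid $\conv(O,F)$ with $F$ a facet of $T$ has unit integer volume, hence is empty; since $T$ is covered by these pyramids (any point of $T$ lies on a segment from $O$ to the far facet it sees), $T$ itself is empty. You instead invoke Proposition~\ref{basic-properties-FT} only for the base facet, use the resulting $\GL(n,\z)$ change of basis to reduce every top-dimensional Farey simplex to the single model $\Delta_0=\conv(e_1,\ldots,e_n,\mathbf{1})$, and verify emptiness of $\Delta_0$ by a cube-corner check. Your argument is longer but more explicit, and it yields the extra information that all top-dimensional Farey simplices are integer-congruent to one another; the paper's argument is shorter but leaves the covering step implicit.
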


	\begin{proof}
		Recall that any simplex with integer volume~$1$ must be empty, by~\cite[Prop.~$18.11$]{karpenkov-book-v2}.
		
		Consider any Farey simplex $T$.
		By Proposition~\ref{basic-properties-FT}, the volumes of the pyramids with centre at the origin and bases at faces of $T$ equal $1$. 
		Therefore, they are all empty, and hence $T$ is empty as well.  
	\end{proof}
	
	\begin{remark}
		In the following section we introduce the \textit{Farey summation algorithm}, whose quasi-Farey net was studied by O.~R.~Beaver and  T.~Garrity in~\cite{Garrity2004} in dimension $3$. 
		The corresponding continued fraction map on the net 
		is called {\it the Triangle map}.
	\end{remark}

	\begin{remark}
		Later in Proposition~\ref{basic-properties-FT-extended}
		we show that the closure of the union of all the Farey simplices is the whole non-negative orthant and that two Farey simplices do not intersect.
	\end{remark}

	\subsection{Description of the Farey summation algorithm}
	\label{subsection: farey summation algorithm}
	
	Let us describe a natural continued fraction algorithm generated by the Farey tessellation.

	\begin{mybox}[label=definition-farey-summation-algorithm]{Farey summation algorithm}
		The {\it Farey summation algorithm} is the algorithm that produces a (finite or infinite) sequence of simplices $T_i$ for a given vector $v\in \r^n_+$. 
		
		\vspace{2mm}
		
		\noindent
		{\it \underline{Base (Step~$0$) of the algorithm:}}
		Let $S_0$ be the basis
		simplex $OE_1\ldots E_n$.
		We formally set the simplex $E_1\ldots E_n$
		to be the \textit{deck $($zero yard$)$ $T_0$} of the algorithm.
		Here a \textit{yard} is an $(n-1)$-dimension simplex through which the ray in the direction of $v$ passes.
		
		\vspace{2mm}
		
		\noindent
		{\it \underline{Step~$i$ of the algorithm:}}
		In each Step~$k$ we construct the $k$-th yard $T_{k}$. 
		Hence we begin Step~$i$ with the $(i-1)$-th yard $T_{i-1}$ as input.
		
		Denote by $S_i$ the Farey pyramid with base $T_{i-1}$, we call it the {\it $i$-th Farey pyramid} of the Farey summation algorithm.
		Let us show how to construct the {\it $i$-th yard} $T_i$.
		Note that the ray in the direction of $v$ intersects exactly one side face of $S_i$ of any dimension in its interior (here interiors of zero-dimensional faces is the vertex itself). 
		We set $T_i$ to be this face.
		
		If the dimension of $T_i$ is lower than the dimension of $T_{i-1}$ we say \textit{the dimension drops at Step~$i$}.
		We demonstrate this step diagrammatically in Figure~{\ref{figure: farey summation step}}.
		
		\vspace{2mm}
		
		\noindent
		{\it \underline{Termination of the algorithm:}}
		In case the next chosen simplex is a single  vertex, the algorithm naturally terminates. 
		Otherwise, the algorithm produces an infinite sequence of yards.
		
		The last yard is called \textit{the $($crow's$)$ nest};
		the last vertex is called the 
		\textit{pennant}.
	\end{mybox}

	\begin{figure}[t]
		\[
		\includegraphics[width=.22\textwidth]{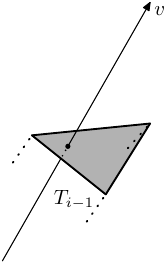}
		\qquad\quad
		\includegraphics[width=.22\textwidth]{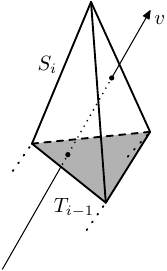}
		\qquad\quad
		\includegraphics[width=.22\textwidth]{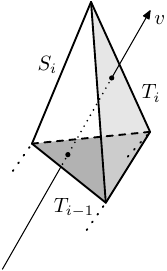}
		\]
		\caption{The $i$-th step in the Farey summation algorithm.}
		\label{figure: farey summation step}
	\end{figure}
	
	\begin{definition}
		
		The union of the Farey pyramids formed by the algorithm is called the \textit{Farey polyhedron}.
	\end{definition}
	
	The Farey polyhedron for the vector $(5,7,8)$ is shown in Figure~{\ref{sail3d-1-2.pdf}} below.
	The Farey summation algorithm may be tabulated as a continued fraction, and this is shown precisely in the following subsection.

	\begin{remark}
		Let $T_k$ and $T_l$ ($k<l$) be two yards for some vector $v$.
		Then the dimension of $T_l$ does not exceed the dimension of $T_k$.
		(The same for $S_k$ and $S_l$.)
	\end{remark}
	
	\begin{remark}
		Historically a multidimensional continued fraction algorithm is called {\it Farey} if its net is Farey. 
		However, while the multidimensional Farey tessellation of the Farey summation algorithm provides the most straightforward generalisation of the Farey tessellation of the plane, 
		its corresponding projection is a quasi-Farey net (and not a Farey may).
		For this reason the Farey summation algorithm is not really a Farey algorithm (however it provides the direct generalisation of the Farey addition). 
	\end{remark}

	\begin{remark}
		\label{remark: nautical terminology}
		The nautical terminology is motivated by two previous unrelated definitions: the \textit{sails} first studied by F.~Klein and so named by V.~I.~Arnold, and \textit{Farey boats} introduced by S.~Morier-Genoud and V.~Ovsienko, which we study further in Subsection~\ref{subsection: prismatic triangulation}. 
		
	\end{remark}

	\subsection{Definition of Farey summation continued fractions}
	\label{subsection: farey summation cf}
	
	We describe an algorithm for \textit{Farey summation continued fractions}, based on the Farey summation algorithm.
	For simplicity we work essentially in the three-dimensional case.
	
	\subsubsection{Enumeration of vertices}
	
	At each step of the Farey summation algorithm we have a Farey simplex $P_1$ as input and a second Farey simplex $P_2$ as output, connected by a yard.
	If $\dim(P_1)=\dim(P_2)$ then there is one vertex $v_1$ of $P_1$ that is not present in $P_2$, and one vertex $v_2$ of $P_2$ not present in $P_1$.
	In this way each step of the algorithm adds a vertex to the Farey polyhedron, and we say that $v_2$ \textit{replaces} $v_1$.

	Let us first enumerate the vertices in the algorithm.
	An \textit{external face} is any face of any dimension in the Farey polyhedron that is not a yard.
	Our base external face is $V_{0,1}=(1,0,\ldots,0)$, up to $V_{0,n}=(0,\ldots,0,1)$.  
	
	\vspace{2mm}
	
	\begin{mybox2}{Enumeration of Farey summation algorithm}
		We start the enumeration with the labelling of the basis vectors $V_{0,1},\ldots,V_{0,n}$ and the vector $v$.

		\vspace{2mm}
		
		\noindent
		As input for Step~$i{+}1$ we have an external face made up of $k\leq n$ vectors $V_{i,j_1},\ldots,V_{i,j_k}$, whose indices $(j_1,\ldots,j_k)$ form a subsequence of $(1,\ldots,n)$.
		
		We have two situations, either the dimension drops at Step~$i{+}1$ or it does not. 
		\begin{itemize}
			\item (No dimension drop): In this case one vertex is replaced by a new vertex. 
			Set the index of the new vertex to be the same as the replaced vertex (while the indices of the unchanged vectors stay the same). 
			For instance, if $V_{i,j_2}$ is replaced then we label: $V_{i+1,j_1}=V_{i,j_1}$; $V_{i+1,j_2}=V_{i,j_1}+\ldots+V_{i,j_k}$;  $V_{i+1,j_3}=V_{i,j_3}$; up to $V_{i+1,j_k}=V_{i,j_k}$.

			\item (Dimension drop): In this case the ray in the direction of $v$ passes through an edge of dimension $<k$ of the external face.
			We cease the labelling of all vectors not making up this edge. 
			The new vertex in this step may take on any remaining index.
			(The choice corresponds to the number of zeros after a dimension drop seen in the Meester continued fraction, see Subsection~{\ref{subsubsection: nose stretching}} below.)
			The remaining indices are labelled in a cyclic order.
			
		\end{itemize}
		
	\end{mybox2}

	\begin{definition}
		\label{definition: principal}
		
		We say that the $i$-th yard ($i\ge 1$) of a Farey polyhedron is \textit{principal} if either the dimension drops at Step $i{+}1$ or if the index of the vertex added to the Farey polyhedron in Step $i{+}1$ is different to the index of the vertex added in Step $i$.
		The deck and the nest are considered to be \textit{principal} by default.
		
	\end{definition}

	\begin{figure}[t]
		\[
		\includegraphics[width=6.5cm]{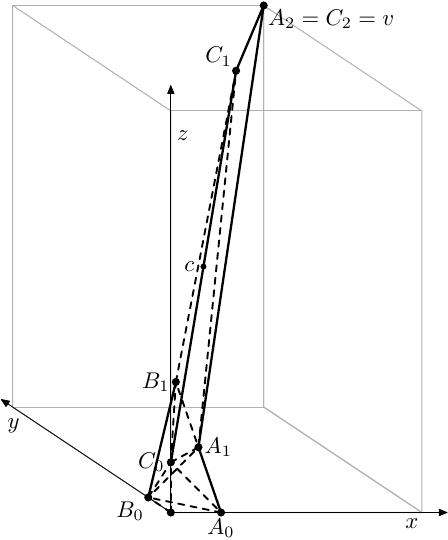}
		\qquad
		\qquad
		\includegraphics[width=7cm]{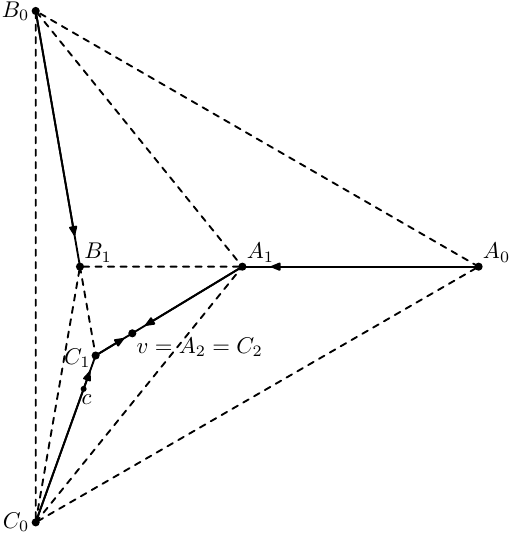}
		\]
		\caption{Farey summation algorithm for $(5,7,8)$ (left) and its central projection to the plane $x+y+z=1$.}
		\label{sail3d-1-2.pdf}
	\end{figure}
	
	Vertex enumeration allows us to track the supporting structure of the Farey polyhedra.
	To illustrate this, in the following example we relabel the standard basis from $\{E_0,E_1,E_2\}$ to $\{A_0,B_0,C_0\}$.

	\begin{example}\label{(5,7,8)-part 1}
		In Figure~\ref{sail3d-1-2.pdf} we show the Farey summation algorithm for the vector $(5,7,8)$.
		Note that in right hand picture's projection the origin is located at $A_1$.
		
		\begin{itemize}
			\item {\it Base:} We start with the basis tetrahedron $S_0=OA_0B_0C_0$ and set $T_0=A_0B_0C_0$, here $T_0$ is the deck.
			
			\item {\it Step~1:} We construct the Farey pyramid $S_1=A_0B_0C_0A_1$ with $A_1=A_0\oplus B_0 \oplus C_0 =(1,1,1)$. 
			The vector $v$ intersects the side $A_1B_0C_0$ (seen clearly from the right picture). 
			Hence $T_1=A_1B_0C_0$.
			
			\item {\it Step~2:} The next Farey pyramid is $A_1B_0C_0B_1$ with 
			$B_1=A_1\oplus B_0 \oplus C_0=(1,2,2)$.
			Hence $T_2=A_1B_1C_0$.
			
			\item {\it Step~3:}
			$S_3=A_1B_1C_0c$ with $c=(2,3,4)$; \\
			$T_3=A_1B_1c$. 
			The point $c$ is written in lowercase as $T_3$ is not a principal yard.
			
			\item {\it Step~4:}
			$S_4=A_1B_1c\, C_1$ with $C_1=(4,6,7)$;\\
			$T_4=A_1C_1$. Note that here the yard is one-dimensional. 
			
			\item {\it Step~5:}
			$S_5=A_1C_1v$. Since $T_4$ is one-dimensional, $S_5$ is two-dimensional.
			Here we arrive to the final vector 
			$v=A_1\oplus C_1=(5,7,8)$.\\
			The algorithm terminates here.
		\end{itemize}

		\vspace{2mm}
		
		In Figure~\ref{sail3d-1-2.pdf} (Left) we show all the Farey simplices for $(5,7,8)$ in the space; in Figure~\ref{sail3d-1-2.pdf} (Right) we show the central projection all the Farey simplices to the plane $x+y+z=1$. 
		As we will see later the point $v$ 
		can be labelled either as $A_2$ or $C_2$.
		
	\end{example}

	In the above example all the yards $T_i$ except for $i=3$ are principal; $T_0$ is a deck; $T_4$ is a nest; and the last vector $v$ is a pennant.

	\begin{example}
		
		\begin{figure}[t]
			\[
			\includegraphics[width=6.5cm]{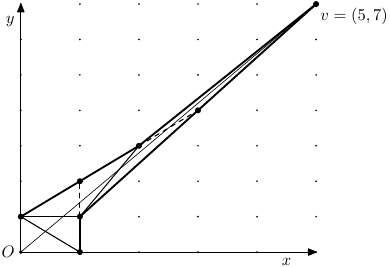}
			\]
			\caption{Farey summation algorithm for $(5,7)$.}
			\label{principalyardexample.pdf}
		\end{figure}
		
		Consider the Farey summation algorithm in two-dimensions for the vector $(5,7)$, shown in Figure~{\ref{principalyardexample.pdf}}.
		Note that the $(i-1)$-th and $i$-th principal yards (coloured grey in the figure) separate the Farey polygon into collections of $c_i$ triangles, where $c_i$ is the $i$-th element of the ordinary continued fraction of $7/5$, namely $[1;2:2]$.
		
	\end{example}

	Let us formulate the following important properties of Farey pyramids.
	
	\begin{proposition}
		Two consecutive principal yards of a Farey polyhedron
		share a face of codimension~1 $($with respect to the yard$)$.
		
		\vspace{2mm}
		
		\noindent
		The convex hull of two consecutive principal yards is a simplex; it coincides with the union of the Farey pyramids between them.
		\qed
	\end{proposition}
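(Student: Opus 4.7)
The plan is to unpack the definition of \emph{principal} to pin down the structure of the block of non-principal yards separating two consecutive principal ones, compute the resulting sequence of Farey apexes explicitly, and then read off both conclusions from a short identity about these apexes.

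First I would characterise the intermediate yards. If $T_i$ and $T_{i+r}$ are consecutive principal yards with $T_{i+1}, \ldots, T_{i+r-1}$ non-principal in between, then by definition no dimension drop occurs on any transition $T_{i+l} \to T_{i+l+1}$ for $l = 1, \ldots, r-1$, and each of these replaces the vertex of the current yard in the same slot $j$ as the previous transition did. Propagating this across the block, every transition $T_{i+l} \to T_{i+l+1}$ for $l = 0, \ldots, r-1$ preserves the dimension $k-1$ of the yard and replaces the vertex in slot $j$. Writing $T_i = V_1 \ldots V_k$, this forces each $T_{i+l}$ to be of the form $V_1 \ldots V_{j-1} W_l V_{j+1} \ldots V_k$ for a single varying vertex $W_l$.

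Next I would compute the apex sequence. Setting $W_1 = V_1 \oplus \cdots \oplus V_k$ and applying the Farey pyramid construction to the base $T_{i+l-1}$ gives the recursion
$$
W_l \;=\; W_{l-1} \oplus \bigoplus_{m \ne j} V_m \;=\; W_{l-1} + (W_1 - V_j),
$$
and a one-line induction yields $W_l = l\,W_1 - (l-1) V_j$, or equivalently
$$
W_l \;=\; \tfrac{l}{r}\,W_r + \tfrac{r-l}{r}\,V_j \qquad (l = 0, \ldots, r), \qquad W_0 := V_j.
$$
So $V_j, W_1, \ldots, W_r$ are collinear and ordered along the segment $[V_j, W_r]$. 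Proposition~\ref{basic-properties-FT} says $V_1, \ldots, V_k, W_1$ are affinely independent, hence so are $V_1, \ldots, V_k, W_r$, and $P := \conv(V_1, \ldots, V_k, W_r)$ is a $k$-simplex.

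Both conclusions then fall out. The yards $T_i$ and $T_{i+r}$ share the face spanned by $\{V_m : m \ne j\}$, which has codimension~$1$ in each. By construction $P = \conv(T_i \cup T_{i+r})$, and the collinear apex sequence cuts $P$ through the hyperplanes spanned by $\{V_m : m \ne j\} \cup \{W_l\}$ for $l = 1, \ldots, r{-}1$ into exactly the $r$ slices
$$
S_{i+l} \;=\; \conv\bigl(\{V_m : m \ne j\} \cup \{W_{l-1}, W_l\}\bigr),
$$
which are the Farey pyramids of steps $i{+}1, \ldots, i{+}r$. The step I expect to require the most care is the degenerate scenario in which $T_i$ is principal because the dimension already drops at step $i{+}1$: then $r = 1$, the intermediate block is empty, $T_i$ and $T_{i+1}$ have different dimensions, and ``codimension~$1$ with respect to the yard'' must be read in the smaller $T_{i+1}$; both claims then reduce to a direct check on the single pyramid $S_{i+1}$.
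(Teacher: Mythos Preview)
The paper gives no proof at all for this proposition --- it is stated and immediately marked with \qed. Your argument therefore supplies exactly the detailed verification the authors chose to omit, and the core of it is correct: between two consecutive principal yards all intermediate steps replace the vertex in a single fixed slot $j$, the successive apexes $W_l$ lie equally spaced on the segment $[V_j,W_r]$, and this collinearity both forces the shared codimension-$1$ face and gives the subdivision of the simplex $\conv(V_1,\ldots,V_k,W_r)$ into the Farey pyramids $S_{i+1},\ldots,S_{i+r}$.

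One small point deserves attention. Your treatment of the degenerate case locates the dimension drop at step $i{+}1$ and concludes $r=1$. If you compare with Example~\ref{(5,7,8)-part 1}, where $T_2$ and $T_4$ are consecutive principal yards with the non-principal $T_3$ between them and the dimension drops at step~$4$, you will see that the drop can occur at the \emph{last} step $i{+}r$ of the block rather than the first (this depends on how one reads ``the dimension drops at Step $i{+}1$'' in the definition of principal; the example makes clear that the intended reading refers to the dimension of the pyramid $S_{i+1}$, equivalently of $T_i$ relative to $T_{i-1}$). Fortunately your main computation already covers this: $T_{i+r}$, being a side face of $S_{i+r}$, still contains $W_r$, so $\conv(T_i\cup T_{i+r})=\conv(V_1,\ldots,V_k,W_r)$ regardless, and the shared face $\{V_m:m\in T_{i+r},\,m\ne j\}$ has codimension~$1$ in the smaller yard $T_{i+r}$. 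So the fix is cosmetic: relocate the caveat from ``$r=1$'' to ``dimension may drop at step $i{+}r$'', and observe that the convex-hull identity survives unchanged.
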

	
	As we will see later, while defining continued fractions we group consecutive Farey pyramids of the Farey summation algorithm together as follows.
	
	\begin{definition}
		\label{division simplex}
		The convex hull of two consecutive principal yards is said to be a \textit{division simplex} for the Farey polyhedron (division tetrahedron in three-dimension).  
	\end{definition}

	\subsubsection{Tabulation of the algorithm}\label{definition: Farey summation continued fractions}
	
	Let us present a tabulation of the Farey summation algorithm in three-dimensions.
	
	\begin{mybox2}{Farey summation algorithm continued fractions (Tabulation)}
		In the three-dimensional case the algorithm splits into two stages.

		\vspace{2mm}
		
		\noindent 
		{\it \underline{Stage~1 $($three-dimensional stage$)$:}} 
		
		\noindent Step~$i$ is in this stage if the $i$-th yard is two-dimensional, that is the simplices we generate are three-dimensional.
		
		\noindent Define $r_i\in \{1, 2,3\}$ to be the index of the vertex that is changing. 
		Let us write the sequence of $r_i$ generated in this stage in the following form: 
		\[
		1^{a_1}2^{a_2}3^{a_3}1^{a_4}\ldots,
		\] 
		where $a_j$ are the multiplicities of the consequent indices.

		\vspace{2mm}
		
		\noindent 
		{\it \underline{Stage~2 $($two-dimensional stage$)$:}}
		
		\noindent Step~$k{+}i$ is in this stage when the $(k+i)$-th yard is one-dimensional, that is the simplices we generate are two-dimensional.

		\noindent Here we change only the vertices with indices $s,t\in \{1,2,3\}$ such that $s\equiv k$ and $t\equiv k+1 \mod 3$.
		Again we can abbreviate the sequence of changed indices as:
		\[
		s^{b_1}t^{b_2}s^{b_3}t^{b_4}\ldots
		\]
		(note that $a_k$, $a_{k-1}$, and $b_1$ may be zero while passing from Stage $1$ to Stage $2$).
	\end{mybox2}

	\begin{remark}
		The proposed tabulation is very similar to LR-notation in the Farey graphs (see e.g. in~\cite{karpenkov-book-v2} for more details).
	\end{remark}
	
	Finally we join both of the sequences $(a_i)$ and $(b_j)$ together to form a single sequence.
	\begin{definition}
		\label{farey continued fractions}
		Consider a vector $v$ with positive coordinates. 
		Let it generate the sequences $(a_i)$ and $(b_j)$
		described above.
		The sequence
		\[
		[a_1;a_2:\cdots:a_k\, | \, b_1:\cdots : b_l]
		\]    
		is said to be the {\it Farey summation continued fraction}
		for a vector $v$.
		In case the sequences $(a_i)$ or $(b_i)$ are infinite we write
		\[
		[a_1;a_2:\cdots \, | ]
		\quad \hbox{and} \quad
		[a_1;a_2:\cdots:a_k\, | \, b_1:\cdots ]
		\]
		respectively.
	\end{definition}

	\begin{proposition}
		By construction all finite \emph{(}$i=1,\ldots,k$, $j=1,\ldots,l$\emph{)} and infinite sequences $(a_i)$ and  $(b_j)$ of non-negative
		elements are realised as Farey summation continued fractions for some vector $v$ with the following list of exceptions:
		\begin{itemize}
			\item There are no two consecutive zeros in $(a_i)$ except for:

			--- \, $a_1=a_2=0$ and $a_3\ne 0$;
			
			--- \, $a_{k-1}=a_k=0$, $a_{k-2}\ne 0$, and $(b_j)$ contains at least one positive element;
			
			\item If $(b_j)$ is empty, then $a_k\ne 0$.
			\item All $b_i$ are positive with the only exception that $b_1$ can be $0$ in the case $l\ge 2$.
		\end{itemize}
		\qed
	\end{proposition}
	
	\begin{example}\label{(5,7,8)-part 2}
		Consider a vector $v=(5,7,8)$. In Example~\ref{(5,7,8)-part 1} we have discussed the Farey summation algorithm for $v$. 
		Now we can write the Farey summation continued fraction for it:
		\[
		[1;1:2:0:0 \, |\, 1].
		\]
		Note that we add two zeros since our pair $(s{=}3, t{=}1)$ follows after $(s{=}1, t{=}2)$ and $(s{=}2,t{=}3)$ that we skip.
		\\
		Note also that the sum of all elements is $5$, which is equal to the number of steps (Farey pyramids) in the Farey summation algorithm.
	\end{example}

	\subsection{Meester algorithm}
	\label{subsection: Meester algorithm}
	
	Our next goal is to describe the 
	Meester algorithm which is a Jacobi-Perron type subtractive algorithm. 
	The Meester algorithm plays the same role for the Farey summation continued fractions
	as the Euclidean algorithm plays for continued fractions. In this subsection we work in an arbitrary dimension.

	\subsubsection{A general definition}
	Let us begin with a small remark.
	
	\begin{remark}
		In the Meester algorithm below we fix the order of basis vectors.
		A different ordering of basis vectors will lead to slightly different continued fractions but the same tessellation.
	\end{remark}

	We split the algorithm into two parts.
	The first part is subtractive.
	The second part is a tabulation of the data obtained.
	
	\begin{mybox}[label=definition-Meester-algorithm]{Meester algorithm I: Subtraction}
		We start with an $n$-tuple of non-negative real numbers $(v_1,\ldots, v_n)$. 
		While at least one coordinate is positive we perform the following iteration steps.
		
		\vspace{2mm}
		
		The $i$-th iteration step is as follows
		($i=1,\ldots$): 
		\begin{itemize}
			\item Let $j\in\{1,\ldots, n\}$ satisfy  $j\equiv i \mod n$.
			
			\item 
			If the $j$-th coordinate is zero then we go straight to the next step.
			Otherwise we subtract the $j$-th coordinate from all other \underline{non-zero} coordinates simultaneously as many times as it is possible for all to remain  non-negative: denote the number of times we have subtracted by~$a_i$ (note that $a_i=0$ is possible).
			Denote also the resulting vector as 
			$(v_{i,1},\ldots, v_{i,n})$.

			\item 
			As a result of the previous item we obtain a new $n$-tuple of non-negative real numbers.
			We keep the number $a_i$ as 
			the element of the continued fraction.
		\end{itemize}
		
		The algorithm terminates if we have a single non-zero coordinate.

		\vspace{2mm}

	\end{mybox}
	
	\begin{example}
		\label{example: Meester algorithm}
		Consider the example of $(55,10,67)$. The Meester algorithm produces  
		\[
		(55,10,67) \overset{0,5}{\to} (5,10,17) \overset{0,2}{\to}
		(5,\underline{0},7) \overset{0,1}{\to} (5,0,2) \overset{2}{\to} (1,0,2)
		\overset{2}{\to} (1,0,\underline{0}).
		\]
		The zeros denote steps where no subtraction is possible.
		The sequence of $a_i$ here is $(0,5,0,2,0,1,2,2)$. 
	\end{example}
	
	\begin{mybox2}{Meester algorithm II: Tabulation}
		
		For any $j\in{1,\ldots, n}$ consider the sequence of a single coordinate $(v_{i,j})$. Denote by $s_j$ the integer for which $v_{s_j,j}=0$ and $v_{s_{j-1},j}>0$. If the sequence $v_{i,j}$ is always positive, then we do not assign any value for $s_j$.
		
		\noindent \textit{Note:} We introduce $s_j$ to keep track of the step at which the $j$-coordinate becomes $0$. 
		This is used later when reconstructing the Farey polyhedron from the continued fraction.

		\vspace{2mm}
		
		As the output of Meester algorithm we have two items:
		\begin{itemize}
			\item a sequence of non-negative integers $(a_i)$;
			\item 
			a sequence of $(s_{j_1},\ldots, s_{j_{k}})$ enumerated in increasing order. Here the number $k$ denotes the number of $s_{j}$ for which the values are assigned during the algorithm execution
			(here $k\le n-1$).
		\end{itemize}
		
		Finally we write the elements $a_i$ in the form of continued fraction.
		
		First, let us assume that the algorithm is finite, terminating after Step $N$.
		Then for the sequence of $s_{j_t}$ the range of $t$ is $1,\ldots,n-1$, as after step $N$ we have $n-1$ zero coordinates.
		We also do not indicate the last $s_{j_{n-1}}$ as this denotes a change occurring at the terminal step of the algorithm.
		The following expression is called the {\it Meester continued fraction}:
		\[
		\big[a_1;\cdots: a_{s_{j_1}} \,|_{j_1}\,
		a_{s_{j_1}+1}: \cdots :
		a_{s_{j_2}} \,|_{j_2}\,
		a_{s_{j_2}+1}:\cdots : 
		a_{s_{j_{n-2}}} \,|_{j_{n-2}
		}\,
		a_{s_{j_{n-2}+1}}: \cdots : a_N\big].
		\]
		
		In case the algorithm does not terminate, $k$ could be any number of  $\{1,\ldots, n-2\}$. The corresponding continued fraction is infinite:
		\[
		\big[a_1;\ldots: a_{s_{j_1}} \,|_{j_1}\,
		a_{s_{j_1}+1}: \cdots :
		a_{s_{j_2}} \,|_{j_2}\,
		a_{s_{j_2}+1}:\cdots : 
		a_{s_{j_{k}}} \,|_{j_{k}
		}\,
		a_{s_{j_{k}+1}}: a_{s_{j_{k}+2}}: \cdots \big].
		\]
		For both finite and infinite continued fractions: after the symbol $|_{j_s}$ is used we do not include $a_i$, where $i=j_s\mod n$, in the continued fraction, since the values of $a_i$ are not assigned at these steps.
		
	\end{mybox2}

	\begin{continueexample}{example: Meester algorithm}
		Further we have: 
		\begin{itemize}
			\item 
			$s_{j_1}=4$ and $j_1=2$, 
			from the third vector we have the zero second coordinate (underlined);
			\item $s_{j_2}=8$ and $j_2=3$ in the last vector the third coordinate is zero (underlined).  For simplicity we do not show 
			$|_{j_2=3}$ in the continued fraction, which stands in the last position (after which no steps are done).
		\end{itemize}
		
		\noindent
		Therefore, the \textit{Meester} continued fraction for $(55,10,67)$ is as follows: 
		\[
		[0;\, 5: 0: 2 \, |_{j_1=2} \, 0:1:2:2].
		\]
	\end{continueexample}
	
	\begin{remark}
		\label{future work 1}
		The Meester algorithm has been previously introduced and studied in~\cite{KM1995,Meester1989,MN1989}.
	\end{remark}

	\begin{remark}
		It is a simple exercise to show that the value of the non-zero coordinate in the final step of the algorithm is the greatest common divisor of the initial coordinates, just as in the classical Euclidean algorithm.
	\end{remark}
	
	The following proposition links the geometrical nature of Farey summation algorithm and the number theoretical nature of Meester-algorithm.

	\begin{proposition}
		\label{prop: both cf the same}
		Let $v$ be a non-zero point of the positive orthant.
		Then the Meester algorithm for $v$ generates the same continued fraction as the Farey summation algorithm.
		
		The only difference in writing is as follows. One should replace every 
		\[
		|_j\quad \mbox{ with }\quad 0:\cdots:0:|,
		\]
		where the number of zeros is the number of cyclic transpositions to exclude the correct coordinate. In the three-dimensional case it is either $0$, $1$, or $2$. 
		\qed
	\end{proposition}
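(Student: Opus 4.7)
The plan is to track the coordinates of the test vector $v$ in the moving basis of the Farey summation algorithm and show that these coordinates evolve exactly as the vector in the Meester algorithm evolves, step by step. All notational bookkeeping between the two conventions will then be a translation between a geometry-driven choice of index and Meester's cyclic choice of index.

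First I would set up the key invariant. At step $i$ of the Farey summation algorithm let $V_{i,1},\ldots,V_{i,n}$ be the vertices of the $i$-th Farey pyramid $S_i$ (so $T_{i-1}=V_{i,1}\cdots V_{i,n}$), and write $v=\sum_{k=1}^n\alpha_{i,k}V_{i,k}$ with $\alpha_{i,k}\ge 0$. At $i=0$ we have $V_{0,k}=E_k$, so the vector $(\alpha_{0,k})$ coincides with $(v_1,\ldots,v_n)$, i.e. with the starting vector of the Meester algorithm. The main computation is the following identity: if at step $i{+}1$ the vertex $V_{i,j}$ is replaced by $W=V_{i,1}\oplus\cdots\oplus V_{i,n}$, then using $V_{i,j}=W-\sum_{k\ne j}V_{i,k}$ one gets
\[
v=\alpha_{i,j}\,W+\sum_{k\ne j}(\alpha_{i,k}-\alpha_{i,j})V_{i,k},
\]
so the new coordinate vector is obtained from $(\alpha_{i,k})$ by subtracting $\alpha_{i,j}$ from every entry with $k\ne j$ while leaving the $j$-th entry unchanged. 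This is precisely one subtractive step of the Meester algorithm with pivot coordinate~$j$.

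Next I would match the index $j$ and the multiplicities. The geometric selection rule in Algorithm~\ref{definition: farey summation algorithm} (the ray $v$ intersects the side face opposite $V_{i,j}$) is equivalent, via the formula above, to the arithmetic condition that $\alpha_{i,j}$ can be subtracted at least once from every other coordinate while keeping them non-negative; and the maximal number of consecutive replacements of $V_{i,j}$ in Farey summation is exactly $\lfloor\min_{k\ne j}\alpha_{i,k}/\alpha_{i,j}\rfloor$, which is precisely the Meester count~$a_i$. Whenever Meester's cyclic pivot $j\equiv i\pmod n$ does not admit any subtraction, we record $a_i=0$; on the Farey side this is the step which in the $1^{a_1}2^{a_2}3^{a_3}\cdots$ notation produces a zero exponent before moving to the next index. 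Thus the two sequences $(a_i)$ coincide in Stage~1, by induction on $i$.

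For Stage~2 I would argue as follows. A coordinate $\alpha_{i,j}$ becomes zero exactly at the step $i=s_j$ of Meester, and this is equivalent to $v$ falling into the side face opposite $V_{i,j}$ of the pyramid, which is the geometric definition of the dimension drop of the yard. At this moment Meester inserts the marker $|_j$, while Farey summation passes from Stage~1 to Stage~2 and uses a fresh pair of indices $(s,t)$ with $s\equiv k,\ t\equiv k+1\pmod 3$. To translate $|_j$ into Farey's $0{:}\cdots{:}0{:}|$ I would just count how many cyclic increments (in Meester's cycle) are required to move past the zero coordinate and arrive at the first index used in Stage~2: each such increment contributes a forced $a_i=0$ entry (since the pivoted coordinate is zero and Meester goes straight to the next step), and these zeros are exactly what sits before the vertical bar on the Farey side. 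In dimension three this number is $0$, $1$, or $2$, exactly as stated. A parallel induction then takes care of Stage~2.

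The main technical obstacle is not the coordinate identity itself (which is essentially a one-line change of basis), but a careful verification that the geometric transition ``dimension of the yard drops'' happens at the same step as the arithmetic event ``a Meester coordinate becomes zero'', together with the precise counting of the forced zero entries $a_i=0$ that encode the cyclic realignment after the transition. Once this alignment is correct, assembling the statement is routine.
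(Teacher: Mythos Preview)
The paper gives no proof of this proposition at all: the statement ends with \qed\ and the authors move on, evidently regarding the equivalence as self-evident once the two algorithms have been set side by side. Your proposal is therefore not competing with an existing argument but supplying the verification that the paper leaves to the reader.

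Your approach is the natural one and is correct. The key identity
\[
v=\alpha_{i,j}\,W+\sum_{k\ne j}(\alpha_{i,k}-\alpha_{i,j})V_{i,k}
\]
is exactly what makes a single Farey pyramid step equal to a single Meester subtraction with pivot $j$, and your observation that the geometric choice of $j$ (ray hits the side face omitting $V_{i,j}$) is equivalent to $\alpha_{i,j}=\min_k\alpha_{i,k}$, hence to Meester's ``subtraction is possible at index $j$'', is the heart of the matter. From this it follows that Meester's forced zeros $a_i=0$ at non-minimal cyclic pivots line up with the zero exponents in the cyclic tabulation $1^{a_1}2^{a_2}3^{a_3}\cdots$ of Definition~\ref{farey continued fractions}, and that the positive multiplicities agree as $\lfloor\min_{k\ne j}\alpha_{i,k}/\alpha_{i,j}\rfloor$. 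One small wording issue: when you write ``$v$ falling into the side face opposite $V_{i,j}$'' for the dimension drop, you mean the face of the current yard (not of the pyramid) that omits the vertex whose coordinate has become zero; with that clarification, the Stage~2 bookkeeping and the count of padding zeros before the bar are handled correctly.
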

	
	\begin{remark}
		Here and below we may use the term \textit{continued fraction} to refer to Farey summation continued fractions and Meester continued fractions interchangeably, unless otherwise specified.
	\end{remark}

	\begin{example}\label{(5,7,8)-part 3}
		For the sequence of Example~\ref{(5,7,8)-part 2} we have
		\[
		(5,7,8) \to (5,2,3) \to (3,2,1) \to (1,0,1) \to (1,0,0).
		\]
		The corresponding Meester continued fractions is 
		\[
		[1;1:2\, |_2\, 1]=[1;1:2:0:0\, |\, 1].
		\]
		
	\end{example}
	
	\subsubsection{Extended Meester continued fraction}
	\label{subsubsection: nose stretching}

	Recall the relation between even and odd ordinary continued fractions:
	\[
	[a_0;\cdots:a_n]=[a_0; \cdots: a_{n}-1: 1]
	\]
	(here we assume that $a_n>1$).
	Note that the Euclidean algorithm does not generate $[a_0; \cdots: a_{n}-1: 1]$,
	however both forms find use in the literature.
	For example, in integer geometry one may be interested solely in odd length continued fractions.
	We consider analogous equivalent continued fractions for the Farey summation algorithm.

	\begin{definition}\label{extended-cf}
		Consider the Meester continued fraction.
		Let the last step be on $k$ non-zero coordinates.
		Let us replace the last element $a_n$ of the continued fraction by a sequence
		$(a_{n}-1,0,\ldots, 0,  1)$, where the number of zeros does not exceed $k-2$.
		We say that the obtained sequence is the {\it extended Meester continued fraction}.
	\end{definition}
	
	\begin{example}
		For $(5,7,8)$ we have the following two extended Meester continued fractions
		\[
		[1;1:2\,|_2 \, 1]=[1;1:2\,|_2 \,0:1].
		\]
	\end{example}

	\subsubsection{Reconstruction of Farey simplices for a given continued fraction}
	\label{definition: nose stretching}
	
	Let us show how to reconstruct all the Farey simplices that were used in the Farey summation algorithm from the Farey summation continued fraction.
	
	\vspace{2mm}
	
	\begin{mybox}[label=definition-Nose-stretching-algorithm]{Nose stretching algorithm}
		We start with an $n$-tuple of integer basis vectors $(E_1,\ldots, E_n)$. We are also given a Farey summation continued fraction
		\[
		\alpha=\big[a_1;\cdots: a_{s_{j_1}} \,|_{j_1}\,
		a_{s_{j_1}+1}: \cdots :
		a_{s_{j_2}} \,|_{j_2}\,
		a_{s_{j_2}+1}:\cdots : 
		a_{s_{j_{n-2}}} \,|_{j_{n-2}
		}\,
		a_{s_{j_{n-2}+1}}: \cdots : a_N\big].
		\]
		
		\noindent
		{\bf Step of the algorithm.} On Step~$s$ we get several ordered vectors $(V_1,\ldots,V_n)$.
		The step contains two stages.
		
		\vspace{1mm}
		
		\noindent{\it \underline{ Generating vectors for the next step:}}
		We start this step with the vector $V_j$, the first non-zero vector to the right of $V_{s \mod n}$. 
		In other words, we skip any zero vectors.
		Let us use the interim label $V_j'=V_j$.
		Let $k$ be the number of symbols $|_i$ occurring in all previous steps.
		We denote
		\[
		\hat{V}=V_j'\oplus a_s(V_1\oplus\cdots\oplus V_{j-1}\oplus V_{j+1} \oplus\cdots\oplus V_n)
		\]
		We relabel $V_j=\hat{V}$, so the data for the next step will be the vectors
		\[
		V_1,\ldots,V_n.
		\]
		
		\vspace{1mm}
		
		\noindent{\it \underline{Vector erasing:}} 
		The vectors $V_i$ whose indices are given by symbols $|_i$ occurring on Step $s$ are all replaced with zero vectors (recall there may be multiple $|_j$ at Step~$s$).
		
		\vspace{1mm}
		
		\noindent{\it 
			\underline{Returning a Farey simplex $S_s$:}}
		On this step we return the $s$-th yard $T_s$ and Farey simplex $S_s$, the convex hull of vertices $V_1,\ldots,V_n$ and $V_j',V_1,\ldots,V_n$ respectively,
		\[
		\begin{aligned}
			T_s&= \conv(V_1,\ldots,V_n),\\
			S_s&= \conv(V_j',V_1,\ldots,V_n).
		\end{aligned}
		\]
	\end{mybox}

	\vspace{1mm}
	
	If the Farey summation continued fraction is finite then the algorithm is iterated until the last element $a_j$, at which point all vectors except $V_j$ are set to be the zero vector.
	The algorithm is iterated indefinitely otherwise. 
	It generates the sequence of Farey simplices $(S_s)$ that are used in the Farey summation algorithm for the vector $v$ whose continued fraction is $\alpha$.
	
	As in the Farey summation algorithm, the convex hull of the integer basis vectors $\conv(E_1,\ldots,E_n)$ is the \textit{deck}.
	If the continued fraction is finite then the final yard is the \textit{crow's nest}, and the final vertex generated by the algorithm is the \textit{pennant}

	\begin{example}\label{(5,7,8)-stretching}
		Let us study the case $v=(5,7,8)$ of Example~\ref{(5,7,8)-part 1}, which has continued fraction $[1;1:2|_2\, 1]$.
		We write the corresponding nose stretching algorithm for the basis vectors. 
		For brevity we write the three vectors in the form of a $3\times3$ matrix and omit the Farey simplices.
		
		\[
		\begin{aligned}
			&\left(
			\begin{matrix}
				1 & 0 & 0 \\
				0 & 1 & 0 \\
				0 & 0 & 1
			\end{matrix}
			\right)
			\xrightarrow{1}
			\left(
			\begin{matrix}
				1 & 0 & 0 \\
				1 & 1 & 0 \\
				1 & 0 & 1
			\end{matrix}
			\right)
			\xrightarrow{1}
			\left(
			\begin{matrix}
				1 & 1 & 0 \\
				1 & 2 & 0 \\
				1 & 2 & 1
			\end{matrix}
			\right)
			\xrightarrow{2}
			\\
			&\left(
			\begin{matrix}
				1 & 1 & 4 \\
				1 & 2 & 6 \\
				1 & 2 & 7
			\end{matrix}
			\right)
			\xrightarrow{|_2}
			\left(
			\begin{matrix}
				1 & 0 & 4 \\
				1 & 0 & 6 \\
				1 & 0 & 7
			\end{matrix}
			\right)
			\xrightarrow{1}
			\left(
			\begin{matrix}
				5 & 0 & 4 \\
				7 & 0 & 6 \\
				8 & 0 & 7
			\end{matrix}
			\right)
			\xrightarrow{(|_3)}
			\left(
			\begin{matrix}
				5 & 0 & 0 \\
				7 & 0 & 0 \\
				8 & 0 & 0
			\end{matrix}
			\right).
		\end{aligned}
		\]		
		As output we have the only non zero vector, $v=(5,7,8)$.
		We omit the final $|_3$ in the continued fraction.

	\end{example}

	\begin{definition}
		\label{definition: first convergents}

		Consider a continued fraction $[a_1;\cdots:|_{j_1}:\cdots:a_N]$.		
		On the $i$-th step of the Meester algorithm we get the continued fraction $[a_1;\cdots:a_i]$ and the vector $V_i=(v_{i,1},\ldots,v_{i,n})$.
		The pennant of the Farey polyhedron generated by the nose stretching algorithm applied to $[a_1;\cdots:a_i]$ is called the \textit{$i$-th convergent}.		
		We call $V_i$ the \textit{remainder}.

		\begin{proposition}
			The remainder $V_i$ is the pennant of the polyhedron generated by the nose stretching algorithm applied to
			\[
			[|_{j_1}\cdots |_{j_k}0:\cdots :0 :a_{i+1}:\cdots:a_N].
			\]
			Here the number of initial zeros is the number required to reach the index from Step~$i{+}1$ of the algorithm.
			\qed
			
		\end{proposition}	
		
	\end{definition}
	
	\begin{example}
		For the point $(55,10,67)$ with continued fraction $[0;\, 5: 0: 2|_2\, : 0:1:2:2]$, the remainder when $i=6$ is $V_6=(5,0,2)=[|_2\, 0:2:2]$.
		Note that the index on the~$6$-th step of the algorithm is~$1$.
		
	\end{example}

	\subsection{A few words on convergence of the Farey summation algorithm}
	\label{Convergence of the Farey summation algorithm}
	
	Let us briefly discuss the convergence properties of the Farey summation algorithm. Without loss of generality we restrict to the three-dimensional case, the situation in the higher-dimensional case is similar to~$\r^3$. 
	The results of this section follow directly from~\cite{KM1995}.
	
	\subsubsection{The divergence set is everywhere dense}
	\label{subsection: divergence}

	First of all note that the Farey tessellation
	converges to a single ray if and only if the Meester algorithm converges to $(0,0,0)$.
	It is interesting to observe that the Meester algorithm does not always converge to $(0,0,0)$ everywhere in the positive orthant (see~\cite{MN1989} for the three-dimensional case; see~\cite{KM1995} for the proof in higher-dimensions).
	
	\vspace{2mm}
	
	As it was shown in~\cite{KM1995}
	if at some iteration of the algorithm one of the coordinates exceeds the sum of the other two, then there is no convergency.
	For this reason 
	the convergency set can be constructed by removing the 
	``corner cones'' generated by the vectors:
	\[
	[a_1;\cdots:a_n\,|\,1], \quad 
	[a_1;\cdots:a_n:0\,|\,1], \quad 
	[a_1;\cdots:a_n:0:0\,|\,1];
	\]
	and the ``centre'' is at 
	\[
	[a_1;\cdots:a_n\,|\,].
	\]
	for all admissible sequences $(a_1,\ldots,a_n)$ with $a_n\ne 0$.
	
	\vspace{2mm}
	
	\begin{figure}[t]
		\[
		\includegraphics[width=6.5cm]{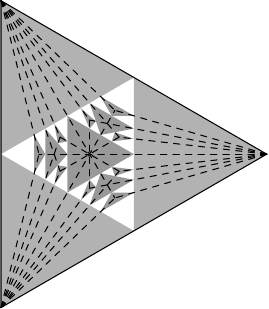}
		\qquad
		\qquad
		\includegraphics[width=6.5cm]{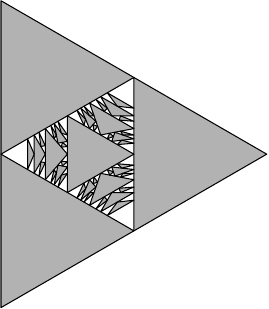}
		\]
		\caption{Meester algorithm divergence set (several iterations)}
		\label{ser-far-1-2.pdf}
	\end{figure}
	
	It is clear that the set is invariant under the multiplication of all the coordinates by any positive real number. Hence to get the structure of the non-convergency set it is sufficient to know the intersection of this set with the plane $x_1+x_2+x_3=0$.
	
	\vspace{2mm}
	
	In Figure~\ref{ser-far-1-2.pdf}
	we show the first several removed triangles.
	They correspond to the sum of the elements of the Farey summation continued fractions smaller or equal to 3 and 4 respectively.
	On the left figure we show the removed (corner) triangles.
	Dashed lines connect the vertices 
	$(1,0,0)$, $(0,1,0)$, and $(0,0,1)$ to the ``centre'' points 
	$[a_1;\cdots:a_n\,|\,]$.

	\subsubsection{No guaranteed algebraic cubic periodicity of the Farey summation algorithm}

	First of all let us observe the following straightforward statement.
	
	\begin{proposition}
		Let $v=(x,y,z)$ be a vector such that $0<x<y<z$.
		Assume that the fraction for $v$ contains only entries 
		corresponding to the first and second coordinates.
		Namely, it is equivalent to the following infinite sequence. 
		\[
		[a_1;a_2:0:a_3:a_4:0\cdots|]
		\]
		Then the regular continued fraction for $y/x$ is
		$
		[a_1;a_2:a_3:a_4\cdots].
		$

		\qed
	\end{proposition}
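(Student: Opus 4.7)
The plan is to work with the Meester algorithm rather than the Farey summation algorithm directly, using the equivalence between them established earlier in the section. Under this translation the hypothesis becomes: the Meester algorithm on $(x,y,z)$ produces the sequence of subtraction counts $(a_1,a_2,0,a_3,a_4,0,a_5,a_6,0,\ldots)$, with a zero at every third position (the positions where the pivot is the third coordinate). I want to show that deleting these zeros yields exactly the subtractive Euclidean algorithm applied to $(x,y)$, which is the source of the regular continued fraction coefficients of $y/x$.

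The heart of the proof is the following claim: at every Meester step whose pivot is $v_1$ or $v_2$, the number of subtractions equals the Euclidean floor $\lfloor v_{\text{other}}/v_{\text{pivot}} \rfloor$ (where $v_{\text{other}}$ is the remaining one of $v_1,v_2$), rather than being limited by the floor involving $v_3$. I would prove this by contradiction. Suppose at some pivot-$v_1$ step that $a = \lfloor v_3/v_1 \rfloor < \lfloor v_2/v_1 \rfloor$, so the $\min$ in Meester's rule is realised by $v_3$. Then after the step $v_3' < v_1 \le v_2'$. At the subsequent pivot-$v_2'$ step, $\lfloor v_3'/v_2' \rfloor = 0$, so the count is zero and the state is unchanged. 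But then the following pivot-$v_3'$ step admits at least one subtraction, because both $v_1$ and $v_2'$ exceed $v_3'$, contradicting the hypothesis that the corresponding coefficient is $0$. The analogous case, where the hypothetical $v_3$-constraint occurs at a pivot-$v_2$ step (following a Euclidean pivot-$v_1$ step), is handled by the same template: one ends with $v_3'' < v_2' \le v_1'$ at the start of the next pivot-$v_3$ step, once again forcing a nonzero subtraction.

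Once the claim is in hand, a simple induction on the block index $k$ shows that the pair $(v_1,v_2)$ at the start of each block of three Meester steps evolves exactly as under the subtractive Euclidean algorithm applied to $(x,y)$: the block replaces $v_2$ by $v_2 \bmod v_1$ in $\lfloor v_2/v_1\rfloor$ subtractions, then replaces $v_1$ by $v_1 \bmod (v_2 \bmod v_1)$ in $\lfloor v_1/(v_2 \bmod v_1)\rfloor$ subtractions, and the pivot-$v_3$ step between blocks is a no-op. Concatenating the coefficients produced block by block yields precisely the sequence $a_1,a_2,a_3,a_4,\ldots$, which is therefore the regular continued fraction expansion of $y/x$. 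The fact that the Meester sequence is infinite forces this expansion to be infinite as well, consistent with $y/x$ being irrational.

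The main obstacle is the case analysis in the second paragraph: one must carefully track the size ordering among the three coordinates after each of the two problematic scenarios so as to locate the forced nonzero subtraction at the following pivot-$v_3$ step. Everything else reduces to routine verification that the projection onto the first two coordinates reproduces the classical Euclidean dynamics.
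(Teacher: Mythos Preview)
The paper gives no proof here; the proposition is marked with a bare $\qed$ as an evident observation. Your Meester-algorithm argument is correct and is the natural way to make the observation rigorous: the contradiction showing that the third coordinate never supplies the binding constraint at a pivot-$v_1$ or pivot-$v_2$ step is valid in both of the cases you outline, and once that is established the projection onto $(v_1,v_2)$ is exactly the subtractive Euclidean algorithm on $(x,y)$, yielding the regular continued fraction of $y/x$. One small point worth making explicit: your contradiction at the subsequent pivot-$v_3$ step tacitly uses that the current third coordinate is strictly positive, which is guaranteed because the continued fraction is assumed infinite and entirely in Stage~1 (no $|_j$ symbol appears, so no coordinate ever vanishes).
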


	Now let us consider one particular example when cubic periodicity fails (that provides a counterexample to Jacobi's last theorem for the Farey summation algorithm).

	\begin{example}{\bf (Failure of algebraic cubic periodicity.)} 
		Consider the matrix 
		\[
		\left(
		\begin{matrix}
			10& 0& 1 \\
			1&  10& 0 \\
			0& 1& 0 \\    
		\end{matrix}
		\right).
		\]
		Its characteristic polynomial
		$t^3 - 20t^2 + 100t - 1$ 
		is irreducible over $\mathbb Q$.
		Let us take the vector $v$ corresponding to the maximal eigenvalues and with the last coordinate equal to $1$:
		\[
		v\approx (3.21113935\ldots,10.31141595\ldots,1).
		\]
		The corresponding Farey summation continued fraction is
		\[
		[0;0:3:4:0:1:2:0:1:3:
		0:1:3:0:1:1:0:2:1:0:1:19:\cdots\,|\,],
		\]
		which is not periodic, since the sequence of elements for this continued fraction coincide (after removing zeros) to the  regular continued fraction for the cubic irrational $v_1/v_3$:
		\[
		\frac{v_1}{v_3}=[3; 4: 1: 2: 1: 3: 1: 3: 1: 1: 2: 1: 1: 19:\cdots].
		\]
		This continued fraction is not periodic by Lagrange's theorem, since the ratio $v_1/v_3$ is not a quadratic irrational number.
	\end{example}

	\section{Properties and invariants of Farey polyhedra}
	\label{section: properties of farey polyhedra}
	Now we come to the main section of the paper. 
	
	\vspace{1mm}
	We start with a short discussion of the basic properties of the Farey tessellation.
	Then in Subsection~\ref{subsection: prismatic triangulation} we introduce a central object of study, the combinatorial \textit{prismatic diagram}.
	From these diagrams, in Subsection~\ref{Sails} we define a generalisation of the notion of sails and introduce the important invariants, the \textit{LLS sequences}.
	
	\vspace{1mm}
	In Subsection~\ref{subsection: Semi-group of matrices} we study the matrix decomposition of the Farey summation algorithm, which is necessary for the study of LLS sequences in Subsection~\ref{LLS sequence and the elements}.
	The decomposition allows a simple definition of three-dimensional \textit{continuants}, shown in Subsection~\ref{subsection: continuants}.
	
	\vspace{1mm}
	We finish the section in Subsection~\ref{LLS sequence and the elements} where we introduce \textit{frieze patterns} from prismatic diagrams, and discuss the generating \textit{Ptolemy relation} for frieze patterns.

	\subsection{Basic properties of Farey tessellation}
	\label{subsection: basic properties tessellation}

	Now it is time to formulate and prove the following general statements of the structure of the Farey tessellation of the positive orthant.
	
	\begin{proposition}\label{basic-properties-FT-extended}
		The following holds:
		\begin{itemize}
			\item[\emph{(}i\emph{)}] The union of all vertices of all Farey simplices $($of all dimensions$)$  is the intersection of the unit integer sphere with the positive orthant;
			
			\item[\emph{(}ii\emph{)}] The closure of the union of all the simplices is the positive orthant;
			
			\item[\emph{(}iii\emph{)}] The interiors of two distinct Farey simplices of maximal-dimension do not intersect.
		\end{itemize}
	\end{proposition}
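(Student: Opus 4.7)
The plan is to derive all three claims from the Farey summation algorithm (Algorithm~\ref{definition: farey summation algorithm}), viewed as a greedy traversal of the Farey tessellation along a chosen ray.

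The forward direction of (i) is immediate from Proposition~\ref{basic-properties-FT}: every vertex $V$ of a Farey simplex is a $0$-dimensional face, so the segment $OV$ is a pyramid of unit integer volume, which forces $V$ to be primitive and hence $V\in\z S^{n-1}$. For the reverse direction, I would take any primitive $V$ in the non-negative orthant and apply the Meester algorithm (Algorithm~\ref{definition: Meester algorithm}) to its coordinates; since $\gcd(V_1,\ldots,V_n)=1$, termination is reached after finitely many steps at a basis vector. Feeding the resulting continued fraction into the nose stretching algorithm (Algorithm~\ref{definition: Nose stretching algorithm}) starting from the standard basis then reconstructs $V$ as one of the vertices that appear en route.

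For (ii), given any $v$ in the positive orthant, either $v$ already lies in the basis simplex $OE_1\ldots E_n$ (which is included in the tessellation by convention), or the ray from $O$ through $v$ pierces $T_0=E_1\ldots E_n$ and continues outwards. In the latter case I would apply the Farey summation algorithm to $v$; the pyramids $S_1, S_2, \ldots$ it generates are glued consecutively along the yards $T_i$, and by construction their union forms a neighbourhood of the ray beyond $T_0$. Since $v$ sits at finite distance along the ray, it lies in some $S_i$ (or in its closure), which gives the statement.

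For (iii), suppose two distinct maximal-dimensional Farey simplices $S\ne S'$ shared an interior point $w$. Running the Farey summation algorithm on $w$ produces a uniquely determined sequence of pyramids, since at each step the ray through $w$ exits the current pyramid through a single side face; in particular $w$ lies in the interior of a single $n$-dimensional pyramid of this sequence, contradicting $w\in\mathrm{int}(S)\cap\mathrm{int}(S')$. I expect the main obstacle to be the reverse direction of (i), namely proving that \emph{every} primitive vector in the positive orthant is realised as a Farey vertex: the Meester approach reduces this to the combinatorics of termination, but one has to take care on the boundary of the orthant where some coordinates vanish. There one would argue inductively on dimension, using that the restriction of the tessellation to any coordinate subspace is itself a Farey tessellation of one lower dimension.
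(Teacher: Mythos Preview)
Your argument for (i) is essentially the paper's: run the Meester algorithm on a primitive vector to get a finite continued fraction, then nose-stretch to exhibit the vector as a Farey vertex. The forward inclusion you add is harmless and correct.

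The substantive problem is in (ii). You assert that the pyramids $S_1,S_2,\ldots$ produced by the Farey summation algorithm for $v$ ``form a neighbourhood of the ray beyond $T_0$'', so that $v$ itself must land in one of them. This is not justified, and it can fail outright: if $v=tp$ with $p\in\z S^{n-1}$ and $t>1$, the algorithm for $v$ terminates at the pennant $p$, and $\bigcup_i S_i$ covers the ray only up to $p$; the point $v$ lies strictly beyond every $S_i$ and is not in the closure of any single one. For irrational $v$ you would need to prove that the ray--yard intersections $p_i\in T_i$ escape to infinity, which is bound up with the nontrivial convergence behaviour discussed in Subsection~\ref{Convergence of the Farey summation algorithm} and is certainly not ``by construction''. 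The paper sidesteps the whole difficulty: it fixes $v$ with irrational coordinates, approximates its \emph{direction} by primitive lattice points $v_i\in\z S^{n-1}$, and observes that each segment $Ov_i$ is covered by the (terminating) algorithm for $v_i$; density of such $v$ then gives the closure statement. This is a genuinely different and cleaner route.

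For (iii) your idea is in the right spirit but has a gap: you run the algorithm on the common interior point $w$ and note that $w$ lies in the interior of exactly one pyramid of \emph{that} sequence---but you never explain why the two given Farey simplices $S,S'$ must occur in the algorithm's sequence for $w$ at all. Farey tessellation simplices are produced by Algorithm~\ref{FareyTess-def} globally, not only along the ray through $w$, so this needs an argument (trace the construction chain of $S$ back to the deck and check the ray through $w$ follows it face by face). The paper argues differently and more directly: it takes the two construction chains $\Lambda_1,\Lambda_2$ of $S_1,S_2$ and separates the simplices either by a yard hyperplane (when one chain is a prefix of the other) or by observing that their projections to the Farey net land in disjoint cells.
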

	
	\begin{proof}
		(\emph{i})
		Consider $v\in \z S^{n-1}$ in the positive orthant. The  Meester algorithm for $v$ will generate the Farey summation continued fraction.
		The corresponding nose stretching procedure for that continued fraction will produce a sequence of Farey simplices, such that the last one contains $v$ as a vertex.
		
		From Corollary~{\ref{cor: farey simplex = basis}} the vertices of any side of any Farey simplex form a basis of the integer lattice.
		Hence it is clear that any vertex of a Farey simplex has integer distance $1$ with the origin, and so is in $\Z S^{n-1}$.
		
		\vspace{2mm}
		
		\noindent
		(\emph{ii})
		Consider any point $v$ with non-rational coordinates and consider any sequence of elements $v_i\in \z S^{n-1}$ whose directions converge to the direction of $v$. 
		Then the closure of the union of segments $Ov_i$ contains $v$ in the closure. 
		Finally note that $Ov_i$ is covered by Farey simplices, where this covering is produced by the nose stretching procedure for $v$.

		\vspace{2mm}

		Since the set of $v$ with irrational coordinates is everywhere dense, the closure of Farey simplices covers the whole positive orthant.
		
		\vspace{2mm}
		\noindent
		(\emph{iii})
		Let the distinct Farey simplices $S_1$ and $S_2$ be constructed by two distinct sequences of the Farey algorithm $\Sigma_1$ and $\Sigma_2$. Let us consider the following cases:
		
		\begin{itemize}
			\item 
			{\it \underline{If $\Sigma_1$ starts from $\Sigma_2$.}}
			Then let us pick the plane $\pi$ of the last exterior face in $\Sigma_1$ through which the ray in the direction of $v$ passes. 
			By construction the origin and $S_1$ are in one halfspace with respect to $\pi$ while $S_2$ is in the other. 
			Therefore, their interiors do not intersect.
			
			\vspace{1mm}
			
			\item
			{\it \underline{If $\Sigma_2$ starts from $\Sigma_1$.}} This case is similar to the above one.
			
			\vspace{1mm}
			
			\item {\it \underline{Neither of the above two cases.}} This means that after some time the sequences of simplices projects to different non-intersecting triangles in the Farey net. Therefore they do not have a common point in their interiors.
			
		\end{itemize}
	\end{proof}

	We would like to continue with the following important example.
	
	\begin{example}\label{example-not all tetrahedra}
		\textbf{Farey tessellation in three-dimensions does not contain all empty tetrahedra of volume 2.}
		Let 
		\[
		v_1=(6,14,15), \quad 
		v_2=(5,13,14), \quad 
		v_3=(5,12,13), \quad 
		\]
		and let $w=v_1\oplus v_2\oplus v_3=(16,39,42)$.
		
		The tetrahedron $wv_1v_2v_3$ is empty.
		The continued fraction of $w$ is $[2;1:2:0:3|_{1,3}]$.
		The base triangle of the tetrahedron that has pennant $w$ is formed from the convergents of $[2;1]$, $[2;1:2]$, and $[2;1:2:0:2]$.
		These are not the vertices $v_1$, $v_2$, and $v_3$.
		The reason for this is that to reach any $v_i$ we must use a two-dimensional step.
		Hence not all empty tetrahedra are contained in the three-dimensional Farey tessellation.
	\end{example}

	\subsection{Prismatic triangulations}
	\label{subsection: prismatic triangulation}
	Now we study the combinatorial structure of Farey polyhedra.

	\subsubsection{Ordered path-triangulations and their complete invariant}
	
	In this section we consider {\it polyhedra} as convex hulls of finite numbers of points. 
	We say that a polyhedron is {\it marked} if each of its edges contain a finite number of marked interior points (zero is also allowed). 
	A decomposition of a marked polyhedron $P$ into non-intersecting simplices (of maximal-dimension) is said to be a {\it triangulation} of $P$
	if the set of vertices of all such simplices coincides with the union of all vertices of $P$ and all marked points of $P$.
	
	\vspace{2mm}
	
	Recall that the {\it dual} graph of a triangulation is the graph whose vertices are labeled by simplices in the triangulation; an edge of the dual graph connects two vertices if and only if the corresponding two simplices share a face of codimension~$1$.  
	
	\begin{definition}
		\label{definition: similar triangulations}
		We say that two triangulations are {\it similar} if there is a one-to-one map between their simplices that provides equivalence of their dual graphs. 
	\end{definition}
	
	Below we study triangulations of the following type.
	
	\begin{definition}
		We call a triangulation $T$ a {\it path-triangulation} if its dual graph is a path graph.
	\end{definition}
	
	\subsubsection{Decks, masts, yards, crow's nests, and pennants}

	Recall the definitions of Subsection~\ref{subsection: farey summation algorithm}.
	In order to remove some natural symmetries of path-triangulations we introduce the following definition.

	\begin{definition}
		\label{def: path triangulation}
		Consider a path-triangulation $T$.
		Let us mark by $S_0$ and $S_1$  the two simplices corresponding the endpoints of the dual graph, and let $F_0$ and $F_1$ be one of the codimension~$1$ {\it exterior} faces of $S_0$ and $S_1$ respectively, (i.e.  $F_0$ and $F_1$ are not faces of some simplex other than $S_0$ and $S_1$). We say that $T$ is 
		{\it ordered} if the faces $(F_0,F_1)$ are fixed and the vertices of $F_0$
		are ordered. 
		We say that  $F_0$ is {\it deck} of $T$, and that $F_1$ is the {\it nest} of $T$ (or \textit{crow's nest}).
		The unique vertex of the nest that is not a vertex of any other simplex is called the \textit{pennant}.
	\end{definition}
	
	Here and below we assume all triangulations are ordered unless stated otherwise.
	As we show later the nest has a natural ordering induced by the ordering of its deck
	(see Remark~\ref{nest-ordering-remark} below).

	\begin{definition}
		We say that an edge
		of an ordered path-triangulation is a {\it mast edge} if it is adjacent to a single simplex of maximal-dimension and does not belong either to the deck or the nest of the triangulation. 
		\vspace{1mm}
		
		\noindent
		An edge is said to be a {\it yard edge}
		if it is adjacent to more than one simplex of the triangulation.
		
		\vspace{1mm}
		
		\noindent
		A connected component of the union of edge masts is called a {\it mast}.
		A ($k{-}1$)-dimensional face belonging to several simplices is called a {\it yard}.
	\end{definition}
	
	\begin{example}
		Figure~\ref{figure: prismatic diagram 2d} shows a combinatoric representation of two-dimensional Farey summation continued fractions.
		On these diagrams the deck and nest are represented by grey edges and the masts by the exterior edges.
		The interior diagonals are yard edges (in two-dimensions yards and yard edges coincide).
		The pennant is the point $(3,4)$ in each diagram.
		
		\begin{figure}
			\centering
			\begin{minipage}[ht]{.32\textwidth}
				\centering
				\includegraphics[width=0.65\textwidth]{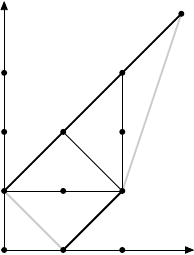}
			\end{minipage}
			\begin{minipage}[ht]{.32\textwidth}
				\centering
				\includegraphics[width=0.65\textwidth]{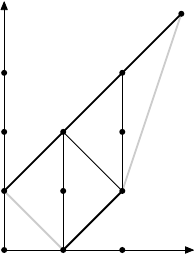}
			\end{minipage}
			\begin{minipage}[ht]{.32\textwidth}
				\centering
				\includegraphics[width=0.65\textwidth]{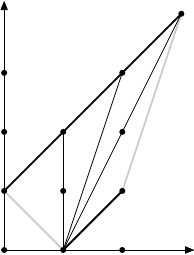}
			\end{minipage}
			\caption{Path triangulations}
			\label{figure: prismatic diagram 2d}
		\end{figure}
	\end{example}
	
	\begin{proposition}\label{masts-decks-prop}
		The following statements hold:
		\begin{itemize}
			\item Each simplex that is not the deck or the nest has precisely one mast edge;
			
			\item Any mast is a broken line;
			
			\item One endpoint of any mast is at the deck and another is at the nest;
			
			\item Any vertex of the nest/deck has at most one mast adjacent to it with one exception. If a vertex of a deck is a vertex of a nest, then no mast is adjacent to this vertex.
		\end{itemize}
	\end{proposition}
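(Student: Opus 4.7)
The plan is to enumerate the maximal simplices along the dual path as $S_0,S_1,\ldots,S_m$, set $Y_i = S_i\cap S_{i+1}$ for the shared codimension-$1$ yards, and for each $0<i<m$ let $v_i^-,v_i^+$ be the unique vertices of $S_i$ not contained in $S_{i-1}$ and $S_{i+1}$ respectively; for the two endpoint simplices I take $v_0^-$ opposite the deck and $v_m^+$ opposite the nest. Two preparatory facts will drive the entire argument. First, $v_i^-\ne v_i^+$ for every $i$: otherwise the yards $Y_{i-1}$ and $Y_i$ would both equal the face of $S_i$ opposite this common vertex, hence coincide, so $S_{i-1}$ and $S_{i+1}$ would share a codimension-$1$ face, adding an edge to the dual graph and contradicting that it is a path. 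Second, for every vertex $v$ of the triangulation the set $I(v)=\{i\mid v\in S_i\}$ is a non-empty interval of consecutive integers.

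With these facts, statement (1) is immediate: any edge of $S_i$ avoiding $v_i^-$ lies in $Y_{i-1}\subset S_{i-1}$ and hence is not a mast edge, any edge avoiding $v_i^+$ lies in $S_{i+1}$, so the only remaining edge is $v_i^-v_i^+$; for $S_0$ and $S_m$ the same argument works with the deck or the nest replacing the missing yard. For (2) and (3) I would observe that if $I(v)=[j,k]$ then $v$ can be an endpoint of the mast edge of $S_i$ only when $i=j$ (and then $v=v_j^-$) or $i=k$ (and then $v=v_k^+$), because for $j<i<k$ one has $v\in Y_{i-1}\cap Y_i$ and so $v\ne v_i^\pm$. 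Hence every vertex of the mast graph has degree at most $2$, establishing (2). The degree-$1$ vertices are then precisely those with $j=0$ or $k=m$ failing to meet the opposite end, and a direct trace starting at such a deck vertex $u$ follows the alternation $u=v_0^+\to v_0^-=v_k^+\to v_k^-=v_{k'}^+\to\cdots$ through strictly increasing indices and must terminate at a nest vertex, giving (3). Finally for (4), a vertex $u$ of the deck that also lies in the nest has $I(u)=[0,m]$, so it is never a $v_i^\pm$ and no mast reaches it; in the remaining cases $u$ is an endpoint of exactly one mast edge.

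The main obstacle I anticipate is the interval property of $I(v)$: a priori a vertex could appear in non-consecutive simplices along the path, and this is not forced by the abstract path structure of the dual graph alone. I plan to prove it by combining convexity of the ambient Farey polyhedron (so that the link of each vertex is a connected subcomplex) with the inductive description provided by the Farey summation algorithm, in which each newly introduced apex is a Farey sum of the current yard vertices and persists in all subsequent pyramids until the algorithm moves to a yard that excludes it.
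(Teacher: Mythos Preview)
Your approach is sound but genuinely different from the paper's. For item~(1) the paper argues exactly as you do. For items~(2)--(4), however, the paper runs a straightforward induction on the number of maximal simplices: removing the nest simplex $S_m$ gives a shorter ordered path-triangulation with nest $Y_{m-1}$, to which the inductive hypothesis applies; re-attaching $S_m$ adds a single new mast edge, and one checks directly that each of the three properties survives. This is shorter and, crucially, never needs the interval property of $I(v)$ at all---the induction builds the masts edge by edge, so the question of whether a vertex can reappear in a non-adjacent simplex simply does not arise.

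Your direct argument via the intervals $I(v)$ is more structural and gives a clearer global picture of why the masts behave as they do, but you must justify the interval property in the correct generality. Proposition~\ref{masts-decks-prop} is stated for arbitrary ordered path-triangulations of marked convex polyhedra, not only for Farey polyhedra, so invoking the Farey summation algorithm is illegitimate here. Your first idea---convexity forces the star of each vertex to be a ball, hence its dual graph (which is the subgraph of the path induced on $I(v)$) is connected, hence $I(v)$ is an interval---is the right one and suffices on its own; drop the Farey fallback entirely. One small wobble: in your trace for item~(3) you write $u=v_0^+$, but an arbitrary deck vertex $u$ with $I(u)=[0,k]$ equals $v_k^+$ rather than $v_0^+$; start the trace there, and the strictly-increasing-index argument then forces the terminal vertex to lie in the nest, as you intend.
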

	
	\begin{proof}
		Each simplex that is not the deck or the nest of the triangulation contains two faces of codimension~1 that are yards. The union of their edges are all edges of the simplex but one. This concludes the proof of the first item.

		\vspace{2mm}
		
		We prove the second, third, and fourth items simultaneously by induction on the number of simplices in the triangulation.
		
		\vspace{1mm}
		
		\noindent
		{\it Base of induction.} All the statements hold for a single simplex triangulation.
		It has a deck, a nest, and a single mast edge between the vertex of a deck that is not in the nest and a vertex of the nest that is not in the deck.
		
		\vspace{1mm}
		
		\noindent
		{\it Step of induction.} Let the statement hold for all triangulations on $n{-}1$ simplices. Consider any triangulation $T$ of $n$ simplices. The last simplex $S_n$ of this triangulation has the nest $N$ and one yard $Y$. Let $D$ denotes the deck of $T$. 
		
		Let us remove $S_n$ and consider the last yard $Y$ as the nest of a smaller triangulation $T'$. All the statements hold for $T'$.
		Let us now add $S_n$. The last mast edge connects $N$ and $Y$.
		Therefor:
		
		\begin{itemize}
			\item 
			All mast are broken lines, as we add the last edge to the vertex where either only one mast or no masts.
			
			\item We either have changed only one endpoint of masts, which will have vertex in the nest $N$, or we create a new mast, then the corresponding vertex of $Y$ should have been a vertex of the deck $D$ by the induction assumption.
			
			\item Finally $T$ and $T'$ has the same deck $D$ and the nests $Y$ and $N$ different by one vertex connected by a new edge. Hence the last statement holds.
		\end{itemize}
		
		That concludes the proof of the last three items.
	\end{proof}
	
	Proposition~\ref{masts-decks-prop}
	allows us to give  the following definition.
	
	\begin{definition}
		The set of masts admits an {\it induced ordering} by the indices of the first vertices in the masts (which is in the deck $D$ with ordered vertices).
	\end{definition}
	
	\begin{remark}\label{nest-ordering-remark}
		In particular the induced ordering of masts itself induces an ordering on the nest.
		Informally speaking, masts have a flavour of the notion of the parallel transform in differential geometry.
	\end{remark}

	\subsubsection{Prismatic polyhedra and diagrams}
	We introduce prismatic diagrams that encode the combinatorial structure of Farey polyhedra.
	
	\begin{definition}
		Let $\mathcal R^{k-1}$ be a simplex of dimension $k{-}1$ with enumerated vertices $R_1,\ldots,R_k$.
		Let $v$ be a non-zero vector orthogonal to the plane $R_1,\ldots, R_k$.
		Let also $D=(d_1,\ldots, d_k)$
		be a sequence of non-negative integers.
		A {\it prismatic polyhedron} is a marked polyhedron
		\[
		\conv(R_1,\ldots, R_k, R_1+d_1v,\ldots, R_k+d_kv),
		\]
		where at each edge $(R_i, R_i{+}d_iv)$ all the points $R_i{+}kv$ with $1\le k\le d_i{-}1$ are marked.
		\\
		We denote it by $(\mathcal R^{k-1},v,D)$.
		
		\vspace{2mm}
		
		\noindent
		For any triangulation of the prismatic polyhedron we mark the faces 
		\[
		F_0=(R_1,\ldots, R_k),
		\quad \hbox{and} \quad 
		F_1=(R_1+d_1v,\ldots, R_k+d_kv);
		\] 
		we order the vertices of faces as indices of $R_i$. 
		Any ordered triangulation (with marked faces $F_0$ and $F_1$ as above) of a prismatic polyhedron is called a {\it prismatic diagram}.
		A prismatic diagram with $n$ simplices (of maximal-dimension) in its triangulation is said to have \textit{length $n$}.
	\end{definition}
	
	In the two-dimensional case the canonical prismatic diagram is also known under the name \textit{Farey boat}, based on the terminology from~\cite{moriergenoud2019}.

	\begin{definition}
		A prismatic diagram is said to be {\it canonical} if $\mathcal R^{k-1}$ is the simplex whose vertices are the endpoints of the first $k$ coordinate vectors and $v=(1,1,\ldots, 1, 0,0,\ldots, 0)$, where the number of unit coordinates is $k$. 
	\end{definition}

	\begin{definition}
		\label{definition: LR sequence}
		Consider a canonical prismatic triangulation
		$\mathcal D$ of length $n$ and dimension $k$. The {\it LR-sequence} of $\mathcal D$ is the following sequence of indices in $\{1,\ldots,k\}$ of length $n$:
		\[
		(M_1,\ldots, M_n),
		\]
		where $M_i\in\{1,\ldots,k\}$ denotes the index of a mast edge that we build on Step~$i$.
	\end{definition}
	Recall that the masts are ordered $(1,\ldots,k)$: at each Step $i$ of the Farey summation algorithm we add a single mast edge to one of the masts. 
	The index of the addended mast is collected as $M_i$.

	\begin{remark}
		\label{remark: LR exponent}
		In particular we can write the LR-sequence in the exponential form: 
		\[
		1^{a_1}2^{a_2}\ldots k^{a_k}1^{a_{k+1}}\ldots
		\]
		for some non-negative collection of $a_i$.
		This expression provides a link to theory of ordinary continued fractions, which we explore in this paper for the particular case of Farey summation continued fractions. (It can be applied to other additive algorithms as well.) 
	\end{remark}
	
	\begin{example}
		In Figure~\ref{figure: prismatic diagram 2d} we show the three triangulations of the prismatic polyhedra in two-dimensions with $D=(1,3)$.
		The sequence $D$ encodes the number of mast edges in each mast.
		The LR-sequences from left to right are $(1,2,2,2)$, $(1,2,1,1)$, and $(1,1,1,2)$ or, in exponential form, $(1^1,2^3)$, $(1^1,2^1,1^2)$, and $(1^3,2^1)$, respectively.

	\end{example}
	
	\begin{remark}
		\label{remark: combinatoric picture}
		
		Canonical prismatic diagrams generalise the Farey boats introduced by S.~Morier-Genoud and V.~Ovsienko in~\cite{moriergenoud2019} to higher-dimensions.
		The prismatic diagrams in Figure~\ref{figure: prismatic diagram 2d} are exactly the Farey boats (also called \textit{wrinkled triangulations}), after transformation of vertices by 
		\[
		\left(
		\begin{matrix}
			0 & 1 \\
			-1 & 1
		\end{matrix}
		\right).
		\]
		The prismatic diagrams give a combinatorial description of Farey polyhedra.
		In two-dimensions they link to the theory of cluster algebras through their relation to Conway-Coxeter friezes.
		The connection between frieze patterns and triangulated polygons was found by J.~Conway and H.~Coxeter in~\cite{ConwayCoxeter1973}, and the connection to cluster algebras by P.~Caldero and F.~Chapoton~\cite{calderochapoton2006}. 
		
	\end{remark}
	
	The following statement is now straightforward.
	
	\begin{corollary}
		The LR-sequence is an invariant of
		similarity of ordered path-triangulations.
		\qed
	\end{corollary}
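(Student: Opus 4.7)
The plan is to prove the claim by induction on the number $n$ of top-dimensional simplices of the triangulation. The base case $n=1$ is essentially vacuous: a single-simplex ordered path-triangulation has a trivially determined LR-sequence, and any two such triangulations are similar.

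For the inductive step, let $T$ and $T'$ be two similar ordered path-triangulations with $n$ simplices. Because the dual graph of a path-triangulation is a path, and its two endpoints are distinguished by the marked deck and nest of an ordered path-triangulation, the similarity must send the simplex $S_n$ adjacent to the nest of $T$ to the analogous simplex $S_n'$ of $T'$. Removing $S_n$ and $S_n'$ produces two ordered path-triangulations $\tilde T$ and $\tilde T'$ with $n-1$ simplices each, whose nests are the yards of $S_n$ and $S_n'$ respectively. These are again similar, and the inductive hypothesis yields that their LR-sequences agree in the first $n-1$ positions.

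It remains to identify the final entry $M_n$. By Proposition~\ref{masts-decks-prop}, every mast of $\tilde T$ is a broken line from a deck vertex to a nest vertex, and the induced ordering on masts comes entirely from the ordering of the deck (see Remark~\ref{nest-ordering-remark}). The last mast edge of $T$ connects the pennant of $S_n$ to a unique vertex $p$ of the yard of $S_n$ (now the nest of $\tilde T$), and $M_n$ is defined to be the index of the mast of $\tilde T$ whose endpoint in its nest is $p$. The same description applies to $T'$. Because the similarity between $T$ and $T'$ restricts to a similarity between $\tilde T$ and $\tilde T'$ compatible with the deck ordering, it carries the mast ending at $p$ to the mast ending at the corresponding vertex $p'$ of $\tilde T'$, and these masts have the same index. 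Hence $M_n = M_n'$.

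The main obstacle I anticipate is the careful bookkeeping needed to verify that the induced ordering of masts is genuinely preserved by the similarity; this requires unwinding the recursive construction used in the proof of Proposition~\ref{masts-decks-prop} and checking that the assignment of a mast index to each top-dimensional simplex depends only on the combinatorial incidence data (the dual graph together with the ordering of the deck) and not on any further geometric information.
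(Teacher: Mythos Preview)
Your inductive argument is correct and considerably more detailed than what the paper offers: the paper states the corollary with no proof at all, prefacing it only with ``The following statement is now straightforward.'' In that sense there is no paper proof to compare against; you have supplied one where the authors chose to omit it.

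A few comments on your write-up. First, the ``obstacle'' you flag at the end is not really an obstacle. The mast structure and its induced ordering are built entirely from the incidence combinatorics of the triangulation together with the ordering of the deck vertices (this is exactly what Proposition~\ref{masts-decks-prop} establishes, by the same induction you are running). A similarity of \emph{ordered} path-triangulations by definition respects the deck and its vertex ordering, so it automatically carries mast $i$ of $T$ to mast $i$ of $T'$; there is no further geometric input to check. Second, note that the paper's bare Definition~\ref{definition: similar triangulations} of similarity (a bijection of simplices inducing a dual-graph isomorphism) is, on its own, too weak for the corollary---any two path-triangulations of equal length would be similar. The corollary only makes sense if similarity of \emph{ordered} path-triangulations is understood to additionally respect the deck/nest marking and the deck-vertex ordering; you assume this implicitly and correctly, and it is clearly the authors' intent even if not spelled out.

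If you want to streamline, you can replace the whole induction by the one-line observation that the LR-sequence is a function only of the dual-graph path (which orders the simplices), the deck ordering (which labels the masts), and the face-incidence data (which determines the unique mast edge of each simplex and hence which mast it extends). All of this is preserved by a similarity of ordered path-triangulations, so the LR-sequence is too.
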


	Let $T$ be a path-triangulation of a $k$-dimensional polyhedron.
	Let us describe a natural piecewise linear map to the canonical prismatic diagram $T$.
	
	\begin{itemize}
		\item[--] First of all we map the deck of $T$ linearly to the polyhedron $\conv E_1\ldots E_k$.
		\item[--] Secondly we map the masts linearly to the corresponding lines $E_i+tv$ parametrised by $t$. The map is linear at each mast edge, and it sends consequences vertices to consequent points $E_i+jv$ where $j=1,2,\ldots$. 
		\item[--] Finally, once the images of all simplices are defined, we map them linearly as well. 
		
	\end{itemize}

	\begin{definition}\label{pathtriangulated}
		The prismatic polyhedron constructed above is said to be {\it associated} to a path-triangulation $T$.
		\\
		The associated polyhedron has an induced structure of an ordered triangulation,
		which we call the {\it canonical prismatic diagram} of $T$ and denote by $\mathcal D(T)$.
	\end{definition}

	\begin{definition}
		We say that two prismatic diagrams are similar if there exists an affine map that preserves  the triangulation and sends marked faces $F_0$ and $F_1$ from the first diagram to the marked faces $F_0'$ and $F_1'$ respectively (here our map must preserve the enumeration of vertices) of the second diagram. 
	\end{definition}
	
	\begin{corollary}
		The canonical prismatic diagram
		is a complete invariant of 
		prismatic triangulations.
	\end{corollary}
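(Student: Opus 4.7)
The plan is to establish that two prismatic diagrams are similar if and only if their canonical prismatic diagrams coincide. From the construction of $\mathcal{D}(T)$, the canonical prismatic diagram depends on exactly two pieces of data: the multi-index $D=(d_1,\ldots,d_k)$ of mast lengths and the combinatorial triangulation (i.e., the set of simplex vertex-tuples). Everything else — the base simplex $\mathcal{R}^{k-1}$ and the direction $v$ — is fixed canonically, so the canonical prismatic diagram is uniquely determined by these two combinatorial pieces of data.

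For the easy direction, I would argue that if $T_1$ and $T_2$ are similar via an affine map $\phi$, then $\phi$ sends deck to deck and nest to nest preserving the enumeration, so $\phi(R_{1,i})=R_{2,i}$ for each $i$. Then $\phi$ must send the mast at $R_{1,i}$ (with $d_i-1$ marked interior points) to the mast at $R_{2,i}$, forcing equality of $D_1$ and $D_2$. Combinatorial equivalence of the triangulations then follows since $\phi$ maps simplices of $T_1$ to simplices of $T_2$. Thus $\mathcal{D}(T_1)=\mathcal{D}(T_2)$.

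For the converse — the substantive direction — assume $\mathcal{D}(T_1)=\mathcal{D}(T_2)$, so $D_1=D_2=D$ and the combinatorial triangulations agree. Each prismatic polygon lies in its own $k$-dimensional affine space, and the points $R_{i,1},\ldots,R_{i,k},R_{i,1}+v_i$ are affinely independent since $v_i$ is transverse to $\mathcal{R}_i^{k-1}$. Hence there is a unique affine map $\phi$ between the two ambient spaces with $\phi(R_{1,j})=R_{2,j}$ for all $j$ and $\phi(R_{1,1}+v_1)=R_{2,1}+v_2$. Since $\phi$ is affine, it satisfies $\phi(R_{1,i}+tv_1)=R_{2,i}+tv_2$ for every real $t$; in particular it is a bijection between vertices and marked points of the two prismatic polygons. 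The agreement of the combinatorial triangulations then forces $\phi$ to send each simplex of $T_1$ to the corresponding simplex of $T_2$, furnishing the required similarity.

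The main obstacle is confirming that the candidate $\phi$ is genuinely a single affine map rather than merely a piecewise-linear reconstruction (such as the one used to build $\mathcal{D}(T)$ in Definition~\ref{pathtriangulated}). This follows because the source and target prismatic polygons each lie in a single $k$-dimensional affine space, and affine maps between such spaces are determined globally by their values on any $k+1$ affinely independent points. Once this is established, preservation of the marked faces $F_0, F_1$ and of the triangulation is automatic from the vertex matching, and the rest is bookkeeping.
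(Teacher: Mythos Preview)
Your proposal is correct. It takes a somewhat different and more explicit route than the paper. The paper's proof is a terse three-liner arguing that (i) every canonical prismatic diagram is itself an ordered triangulation, (ii) distinct canonical prismatic diagrams are not similar, and (iii) the piecewise-linear construction of Definition~\ref{pathtriangulated} exhibits each ordered triangulation as similar to its canonical diagram---so canonical diagrams serve as a system of normal forms. You instead prove the biconditional $T_1\sim T_2 \Leftrightarrow \mathcal{D}(T_1)=\mathcal{D}(T_2)$ directly, building the affine map between two prismatic polygons explicitly from $k+1$ affinely independent points (the deck vertices together with one point along the mast direction). Your route has the advantage of making transparent why the relevant map is a single global affine map and not merely piecewise linear (a point the paper's proof leaves implicit when it invokes ``the above construction''); the paper's route has the advantage of framing the result as the existence of canonical representatives for similarity classes.
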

	
	\begin{proof}
		Indeed any canonical prismatic diagram is an ordered triangulation, so all canonical prismatic diagrams appear.
		
		It is also clear from construction that different canonical prismatic diagrams are not similar.
		
		Finally the above construction shows that every ordered triangulation is similar to one of the canonical prismatic diagrams.
	\end{proof}
	
	\begin{remark}
		As a conclusion the LR-sequence is a complete invariant of canonical prismatic diagrams (and, equivalently, of similarity types of ordered path-triangulations
		).
		Hence the number of distinct canonical prismatic diagrams (of dimension $k$ consisting of $d$ simplices) coincides with the number of LR-sequences of length $d$, which is equal to $k^d$.
	\end{remark}

	\subsubsection{Prismatic flag diagrams}
	
	We introduce prismatic flag diagrams.
	These flag diagrams allow us to knit together the prismatic diagrams of segments of Farey polyhedra separated by a dimension drop.
	
	\begin{definition}
		Let $P_1(\mathcal R^{k_1-1},v_1,D_1)$
		and $P_2(\mathcal R^{k_2-1},v_2,D_2)$
		be two prismatic polyhedra
		such that 
		$k_2\le k_1$.
		Let also $S$ be a $k_2$ element sequence of indices from $\{1,\ldots,k_1\}$.
		The {\it concatenation of $(\mathcal R^{k_1-1},v_1,D_1)$ and $(\mathcal R^{k_2-1},v_2,D_2)$ with respect to $S$} is adding extra segments in the direction of the rays with indices $s_i\in S$ for $i=1,\ldots, k_2$:
		\[
		[R_{1,s_i}+d_{1,s_i}v_1,R_{1,s_i}+d_{1,s_i}v_1+d_{2,i}v_1],
		\]
		where $s_i$ is the $i$-th element of $S$.
		The resulting set is the union of the polyhedron $P_1$ and the convex hull of all the new added segments 
		\[
		P_1+_SP_2.
		\]
		We denote it by
		\[
		\big(\mathcal R^{k_1-1},v_1,D_1,(D_2,S)\big).
		\]
	\end{definition}
	
	\begin{definition}
		Now let us have a sequence of prismatic polyhedra $P_j(\mathcal R^{k_j-1},v_j,D_j)$
		with $j=1,\ldots, l$ with  $k_1>k_2>\cdots>k_l$.
		Let also $S_i$ be sequences of indices of $k_i$ elements
		such that, considered as sets, 
		$S_l\subset S_{l-1}\subset \cdots \subset S_1$. Here we assume $S_1=(1,2,\ldots, k_l)$.
		The polyhedron
		\[
		(\ldots (P_1+_{S_2} P_2) +_{S_3} P_3) +_{S_4} \cdots 
		)+_{S_l}P_l.
		\]
		is said to be a {\it prismatic flag polyhedron}.
		We denote it by
		\[
		\big(\mathcal R^{k_1-1},v_1,(D_i,S_i)_{i=1}^{l}\big).
		\]
		Let now  $P_j$ have prismatic diagrams $T_j$ for $j=1,\ldots, l$. 
		The natural mapping of these triangulations to the prismatic flag polyhedron is said to be a {\it prismatic flag diagram}. 
		We denote it by
		\[
		(T_i,S_i)_{i=1}^l,
		\]
		where $S_1=(1,\ldots,k)$.
		
		\noindent          
		In the case when $T_1$ is a canonical prismatic diagram, we say that the prismatic flag diagram is {\it canonical}  as well. 
	\end{definition}
	
	\begin{remark}
		Note that the concatenation of prismatic polyhedra and their prismatic diagrams depends neither on the positions of simplices $R^{k_i-1}$ nor on vectors $v_i$ for $i=2,\ldots, l$; it depends only on the dimensions $k_i$ and the sets $D_i$
		(for $i=2,\ldots, l$) and on 
		the first prismatic polyhedron $(R^{k_1-1},v_1,D_1)$.
	\end{remark}
	
	\begin{remark}
		Finally it remains to say that, due to the freedom of the ordering of elements within the sequences $S_i$, 
		there are several obvious ways to obtain the same canonical flag diagrams (by permuting $S_i$ and the LR sequences of the corresponding $T_i$). 
	\end{remark}
	
	\subsubsection{Canonical Prismatic flag diagrams and Farey summation continued fractions}
	
	Consider an extended Meester continued fraction:
	\[
	\alpha=\big[a_1;\cdots: a_{s_{j_1}} \,|_{j_1}\,
	a_{s_{j_1}+1}: \cdots :
	a_{s_{j_2}} \,|_{j_2}\,
	a_{s_{j_2}+1}:\cdots : 
	a_{s_{j_{n-2}}} \,|_{j_{n-2}
	}\,
	a_{s_{j_{n-2}+1}}: \cdots : a_N\big].
	\]
	
	For every $i=1,\ldots,n-1$ we let 
	$T_i$ be the canonical prismatic diagram
	whose LR-sequence is 
	$(a_{s_{j_{i-1}}+1},\ldots,a_{s_{j_{i}}})$.
	Here we allow the sequences to be empty. 
	Finally let $S_1=(1,\ldots,k)$ and let $S_i$ be the sequences with ordering induced by $S_1$, whose elements, when considering the $S_i$ as sets, are defined by
	\[
	S_i=S_{i-1}\setminus \{j_i\},
	\]
	for $i=2,\ldots,n-1$.
	
	\begin{definition}
		We say that the canonical flag diagram $(T_i,S_i)_{i=1}^{n-1}$ constructed above is
		the {\it canonical prismatic flag diagram} for 
		the Farey summation continued fraction $\alpha$. We denote it by $T(\alpha)$.
	\end{definition}

	\begin{theorem}
		\label{theorem: prismatic diagrams invariant}
		The canonical prismatic flag diagrams form a complete invariant for Farey summation
		continued fractions.
		In addition the combinatorics of $T(\alpha)$ coincide with the combinatorics of the
		Farey polyhedron.
		\qed
	\end{theorem}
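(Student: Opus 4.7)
The plan is to prove the two assertions separately: first the complete invariant property, then the combinatorial coincidence with the Farey polyhedron. Both ultimately reduce to the block structure of the Farey summation continued fraction induced by the dividers $|_{j_i}$.

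I would begin by observing that the continued fraction $\alpha$ is naturally partitioned by the symbols $|_{j_1},\ldots,|_{j_{n-2}}$ into consecutive blocks of integers, and that by construction each block $(a_{s_{j_{i-1}}+1},\ldots,a_{s_{j_i}})$ is precisely the data defining an LR-sequence of a canonical prismatic diagram $T_i$ in the sense of Definition~\ref{definition: LR sequence}. By the corollary following Definition~\ref{pathtriangulated}, the LR-sequence is a complete invariant of canonical prismatic diagrams, so each block determines $T_i$ uniquely and conversely. The indices $j_i$ appearing at the dividers fix the decreasing chain $S_1\supset S_2\supset\cdots\supset S_{n-1}$ through the recursion $S_i=S_{i-1}\setminus\{j_i\}$, and this chain in turn determines the concatenation pattern of the flag. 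Thus $\alpha\mapsto T(\alpha)$ is well-defined and injective, and every canonical prismatic flag diagram arises in this way by reading off the LR-sequences of its constituent $T_i$ together with the indices removed at each step. This establishes the completeness of the invariant.

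The second assertion is the more delicate one and requires carefully matching the output of the nose stretching Algorithm~\ref{definition: Nose stretching algorithm} with the combinatorics of $T(\alpha)$. I would proceed by induction on the number of steps. At each Step~$s$ within a block $(a_{s_{j_{i-1}}+1},\ldots,a_{s_{j_i}})$, the nose stretching algorithm replaces a single vector $V_j$ by a new sum $\hat V$, which geometrically corresponds to attaching a new Farey pyramid sharing one codimension-one face with the previous yard. Combinatorially, this is exactly the step of extending the $j$-th mast of the corresponding canonical prismatic diagram $T_i$ by one edge, as dictated by the LR-sequence. The crucial matching is that the non-zero coordinate vectors at Step~$s$ are in bijection with the mast endpoints at the current yard, and the face they span is combinatorially the current yard of the flag. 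At a divider $|_{j_i}$ the vector $V_{j_i}$ is erased, which drops the working dimension and initiates a new prismatic diagram $T_{i+1}$ of dimension $|S_{i+1}|$ glued onto the previous nest exactly along the subset $S_{i+1}$: this is the geometric realisation of the concatenation $+_{S_{i+1}}$.

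The main obstacle is organising the bookkeeping at the dimension-drop steps, where one must verify that the combinatorial gluing of the new prismatic diagram $T_{i+1}$ to the current nest along the $k_{i+1}$ vertices indexed by $S_{i+1}$ agrees with the geometric situation in which the nose stretching algorithm continues to act on the surviving non-zero vectors. Here I would use Proposition~\ref{masts-decks-prop} together with the induced ordering on the nest (Remark~\ref{nest-ordering-remark}) to identify the deck of $T_{i+1}$ canonically with the face spanned by the surviving vectors, so that the induction hypothesis applies to the restricted algorithm on the lower-dimensional orthant. Once this identification is in place, the matching of combinatorial data between Farey pyramids and prismatic simplices propagates through all subsequent steps, yielding the claimed coincidence of combinatorics.
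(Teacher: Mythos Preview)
The paper offers no proof of this theorem: it ends with a bare \qed, treating both assertions as immediate from the construction of $T(\alpha)$ given in the preceding paragraphs. Your sketch therefore supplies what the paper deliberately omits, and the overall strategy is sound. The first assertion does reduce, as you say, to the fact that the LR-sequence is a complete invariant of canonical prismatic diagrams (so each block of $\alpha$ is recoverable from its $T_i$, while the chain $S_1\supset\cdots\supset S_{n-1}$ recovers the dividers $j_i$), and the second assertion is naturally handled by induction along the nose stretching algorithm matched against the piecewise-linear map of Definition~\ref{pathtriangulated}.

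One imprecision worth correcting: in your inductive step you write that Step~$s$ of the nose stretching algorithm ``corresponds to attaching a new Farey pyramid'' and to ``extending the $j$-th mast \ldots\ by one edge''. In fact Step~$s$ of Algorithm~\ref{definition: Nose stretching algorithm} replaces $V_j$ by $V_j\oplus a_s(\bigoplus_{i\ne j}V_i)$, which geometrically stacks $a_s$ Farey pyramids at once (passing between consecutive \emph{principal} yards, i.e.\ adding one division simplex in the sense of Definition~\ref{division simplex}) and extends the $j$-th mast by $a_s$ edges rather than one. This is precisely what the exponent $a_s$ in the exponential form of the LR-sequence records (Remark~\ref{remark: LR exponent}). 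The fix is purely bookkeeping and does not affect the argument: the induction goes through verbatim with ``one division simplex / $a_s$ mast edges'' in place of ``one Farey pyramid / one mast edge''.
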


	\begin{example}\label{(5,7,8)-prismatic}
		Let us show the prismatic triangulations for the vector $v=(5,7,8)$ of Example~\ref{(5,7,8)-part 1}.
		
		\vspace{2mm}
		
		Note that we have two different continued fractions  defined by $(5,7,8)$:
		\[
		[1;1:2\, |_2\, 1],
		\quad 
		\hbox{and}
		\quad 
		[1;1:2\, |_2\,0:1].
		\]
		(Technically the algorithm will never arrive to the second continued fraction.)
		
		The corresponding canonical prismatic 
		flag diagrams are shown in 
		Figure~\ref{sail3d-3-4.pdf}.
		They both
		consist of $4$ simplices of dimension $3$ and one simplex of dimension $2$.
		The simplices of dimension $3$ are
		\[
		A_0B_0C_0A_1, \quad A_1B_0C_0B_1,
		\quad A_1B_1C_0c,
		\quad A_1B_1c\, C_1.
		\]
		The simplices (triangles) of dimension 2 are $A_1C_1A_2$ and $A_1C_1C_2$ respectively.
		
		\begin{figure}[t]
			\[
			\includegraphics[width=7cm]{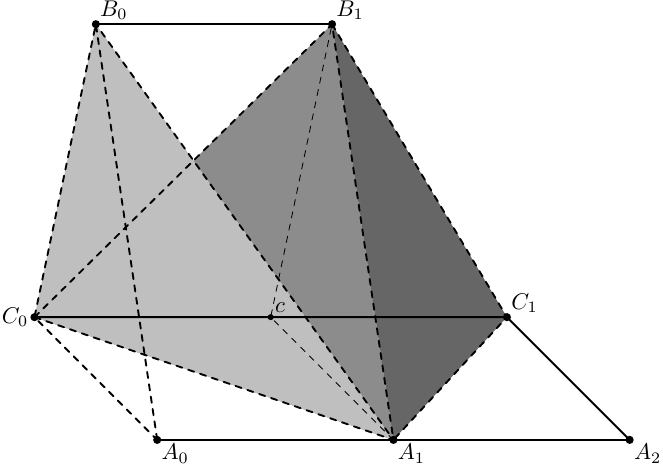}
			\includegraphics[width=7.5cm]{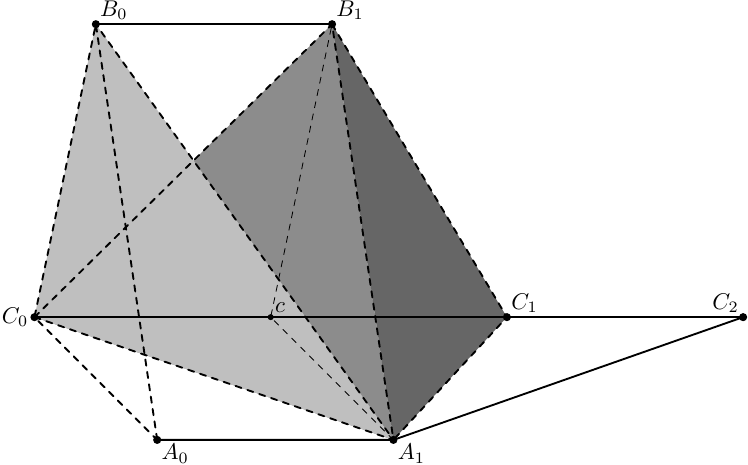}
			\]
			\caption{The two prismatic triangulations for $(5,7,8)$ corresponding to $[1;1:2\, |_2\, 1]$ (Left) and to $[1;1:2\, |_2\,0:1]$ (Right).}
			\label{sail3d-3-4.pdf}
		\end{figure}
	\end{example}
	
	\begin{remark}
		The above construction has a straightforward generalisation to the case of infinite continued fractions. We omit it here.
	\end{remark}
	
	\subsection{Sails and their LLS sequences}
	\label{Sails}
	
	In this subsection we discuss a generalisation of sails and LLS sequences to the multidimensional case.

	\subsubsection{Sails of prismatic diagrams and Farey polyhedra}

	\begin{definition}
		Let $i\in\{1,\ldots,n\}$ and let $T$ be a canonical prismatic flag diagram in dimension $n$. 
		Consider the hyperplane spanned by the vector $(1,1,\ldots,1)$ and a codimension $1$ face of the deck (where we exclude the $i$-th vertex of the deck).
		The intersection of $T$ with this hyperplane is called the {\it sail} of the $T$ and denoted $\sail_i(T)$. 
		
		\vspace{1mm}
		
		\noindent
		The intersection of the sail $\sail_i(T)$ with the deck, the nest, masts, and yards are respectively 
		the {\it deck}, the {\it nest}, 
		the {\it masts}, and the {\it yards}
		of $\sail_i(T)$.
	\end{definition}
	
	The sails of the canonical prismatic flag diagram for the vector $(5,7,8)$ are shown below in Example~{\ref{(5,7,8)-sails}}.
	
	\begin{definition}
		The inverse image of the piecewise linear map 
		(see Definition~\ref{pathtriangulated})
		between the union of the
		Farey polyhedron (equipped with its subdivision to Farey simplices) and the corresponding canonical prismatic flag diagram defines the {\it masts} of the Farey polyhedron.
		Recall that the deck/nest/yards/pennant 
		are already stated in Algorithm~\ref{definition-farey-summation-algorithm}.

		\vspace{1mm}
		
		\noindent
		In addition the inverse image defines {\it sails} of the Farey polyhedron and their {\it decks}, {\it nests}, 
		{\it masts}, and {\it yards}.
	\end{definition}

	\begin{remark}
		\label{remark: classical sail}
		The sail $\sail_i(T)$ is the union of the simplices bound by all but the $i$-th mast, and the masts themselves.
		In two-dimensions $\sail_i(T)$ is simply the $i$-th mast.
		In this case we have a classical construction of sail, see, for example, in~\cite{karpenkov-book-v2}.
		The earliest use of the term sail is from V.~I.~Arnold, see for instance~\cite{Arnold2002}.
		The theory of geometric continued fractions was instigated by F.~Klein in his papers~\cite{Klein1895,Klein1896}.
		
		Three-dimensional {\it sails} for a ray $v$ in the positive octant are the unions of pairs of masts together with the sequence of triangles joining them.
	\end{remark}

	\subsubsection{On the masts at the point of dimension drop}
	
	First let us assume that all of the simplices of the Farey summation continued fraction are
	\label{Farey polyhedron, or cf?? think of def}
	of the same dimension $k$.
	Then all the mast edges are uniquely defined except for the last one.
	This happens since there is no natural way to define the nest combinatorially (without involving any ordering).
	So we can connect the last vector with any of the masts. In other words we have:
	\[
	[a_1; a_2:\cdots :a_n]
	=[a_1; a_2:\cdots :a_n{-}1:1]
	=[a_1; a_2:\cdots :a_n{-}1:0:\ldots: 0:1].
	\]
	where the number of zeros does not exceed $k-2$. 
	This will represent precisely all possible choices of the nest and of the last element of the mast. 
	
	\vspace{2mm}
	
	This corresponds to the phenomenon for classical continued fractions, where:
	\[
	[a_1; a_2:\cdots :a_n]
	=[a_1; a_2:\cdots :a_n{-}1:1].
	\]
	
	\vspace{2mm}
	
	Now if a canonical prismatic diagram consists of several parts of different dimensions, a similar situation occurs. 
	Here, for each one of the masts that vanishes at this step, its indices should be connected to the first new point of the next part of the flag.

	\subsubsection{A rigid structure of Farey masts, nose stretching}
	\label{A rigid structure of Farey masts, nose stretching}

	If we elongate any edge of any mast of a Farey polyhedron by
	a unit integer length vector in the direction of the edge we will reach the first point of some edge on one of the other masts.
	
	\begin{remark}
		Informally speaking the structure of the Farey polyhedron is linearly rigid.
		It gives rise to various dualities of the sails. 
		The fact that stretching a mast edge provides the starting point of a separate mast edge provides a very fast geometric construction of the Farey polyhedron from a continued fraction.
	\end{remark}
	
	Namely, we start with basis elements $E_i$ ($i=1,\ldots, k$) and their sum $P_1$, and with a continued fraction $[a_1;a_2:\ldots]$.
	Then add the vector $E_1P_1$ to $E_1$, $a_1$ times.
	This will generate the first new vector of the Farey polyhedron (of course if $a_1\ne 0$), which we denote by $E_{1,2}$. 
	
	Now we add $E_1P_1$ to $E_{1,2}$ once more to get $P_2$, symbolically
	\[
	\begin{aligned}
		E_{1,2}&=E_1+a_1 E_1P_1,\\
		P_2&=E_1+(a_1{+}1)E_1P_1.
	\end{aligned}
	\]
	Now the point $P_2$ is on the first edge emanating from $E_2$ (except for the case $a_2=0$).
	So we set
	\[
	\begin{aligned}
		E_{2,2}&=E_2+a_2E_1P_2,\\
		P_2&=E_2+(a_2{+}1)E_1P_2.
	\end{aligned}
	\]
	Proceeding further we will construct the whole Farey polyhedron.
	This procedure is a reinterpretation of nose stretching for Farey polyhedron. 
	Informally, at each moment we stretch one of our $k$-noses further and further away.

	\begin{definition}
		The vertices $E_{i,j}$ generated by the algorithm are called the {\it partial quotients} of $v$.
	\end{definition}

	\begin{remark}\label{Stretching-olny-farey}
		Nose stretching exactly generalises the procedure for classical continued fraction theory.
		This procedure is specific to the Farey algorithm. 
	\end{remark}

	\subsubsection{LLS sequences and their dualities}

	Every sail possesses a collection of invariants that encodes most of the elements of the continued fractions.
	In this subsection we introduce the LLS sequence.
	Let us start with a few notions used in the definition of the LLS sequence.
	
	\vspace{2mm}
	
	An angle is {\it integer} if its vertex is an integer point.
	An angle is rational if both of its edges contain integer points distinct from the vertex.
	
	We say a space is \textit{integer} if it contains a full rank integer lattice.
	\begin{definition}
		
		\noindent Let $\pi_1$ and $\pi_2$ be two integer spaces whose intersection $P_1\cap P_2$ is also an integer space.
		Let $P_1$ and $P_2$ be integer bases for $\pi_1$ and $\pi_2$ respectively such that $P_1\cap P_2$ is a basis for $\pi_1\cap\pi_2$.
		We call this intersection $W=P_1\cap P_2$.
		Let $U=P_1\setminus W$ and $V=P_2\setminus W$.		
		Then the expression
		\[
		\frac{\iv(V,U,W)}
		{\iv(V)\cdot \iv(U)\cdot \iv(W)}
		\]
		is called the \textit{integer sine} of the angle between  $\pi_1$ and $\pi_2$ and denoted by $\isin(\pi_1,\pi_2)$.
	\end{definition}
	
	Recall Definition~{\ref{definition: principal}} of a principal yard and that a division simplex is the convex hull of two consecutive principal yards.
	
	\begin{definition}
		
		Consider the sequence of division simplices $T_i$ constructed by the Farey summation algorithm for some integer vector.
		Let $F_i$ be the intersection of a sail with the division simplex $T_i$. 
		Then we say that $F_i$ is a \textit{principal face of this sail} if
		$\dim F_i\geq \dim T_i-1$. 
		The set of all principal faces of a sail has a natural ordering induced by the ordering of the Farey summation algorithm.
		
		\vspace{1mm}
		
		\noindent
		A {\it mast segment} is the union of all mast edges in a line in a Farey polyhedron.
	\end{definition}
	
	\begin{remark}
		For the principal face $F_i$ there are the following two possibilities.
		\[
		\dim F_i=
		\left\{
		\begin{array}{ll}
			\dim T_i,& \hbox{if a mast was removed on the $(i-1)$-th step;}\\
			\dim T_i-1,& \hbox{otherwise.}\\
		\end{array}
		\right.
		\]
	\end{remark}
	
	\begin{example}
		In Figure~\ref{sail3d-3-4.pdf}, for the sail between masts $A$ and $C$, we see the principal faces are $A_0A_1C_0$, $A_1C_0C_1$, and $C_1A_1A_2$.
	\end{example}
	
	\begin{definition}\label{def:LLS-3D}
		
		The LLS sequence of the $j$-th sail is the $j$-th sail of the canonical prismatic flag diagram 
		equipped with the following:
		
		\begin{itemize}
			
			\item Each pair of consecutive principal faces is equipped with the integer sine between the planes that these faces span.
			We indicate the yard edge connecting both principal faces (edge of a principal yard) and equip it with the corresponding integer sine (a dashed line indicates $0$ integer sine: in this case the principal faces lie in the same plane in the Farey polyhedron);
			
			\item We indicate the dropping of the $j$-th mast by a double yard edge.
			
			\item For the simplicity of drawing we replace mast segments of length $n$ by a single segment of length $1$ equipped with $n$ (the integer length).
			
		\end{itemize}
	\end{definition}
	
	\begin{remark}
		The term LLS sequence (lattice length-sine sequence) comes from a similar notion in the two-dimensional case
		(see, for example, in~\cite{karpenkov2008,karpenkov-book-v2}).
		In the two-dimensional case we have a sequence of numbers, whereas in three-dimensions the prismatic flag diagrams encode sequences of principal faces.
		
	\end{remark}
	
	For consistency, let us start with our standard example.
	
	\begin{example}\label{(5,7,8)-sails}
		Let us start with the Farey polyhedron for $v=(5,7,8)$ of Example~\ref{(5,7,8)-part 1}.

		\[
		\includegraphics[height=4cm]{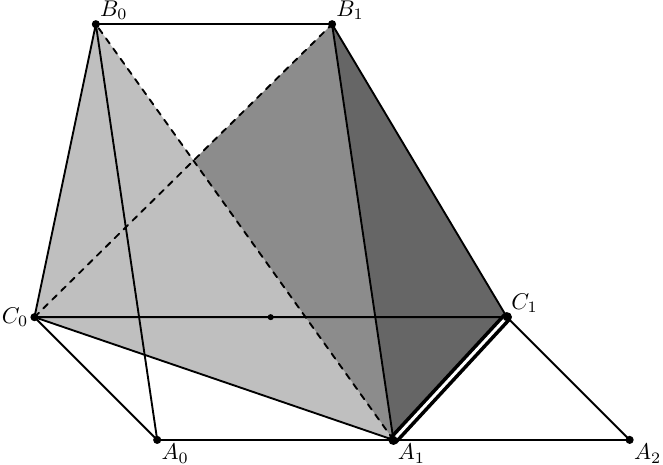}
		\]
		\vspace{2mm}
		
		The sails opposite to masts $B$, $C$, and $A$
		have the following LLS sequences.
		\[
		\includegraphics[height=2.6cm]{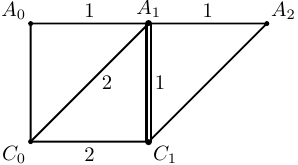}
		\qquad
		\includegraphics[height=2.6cm]{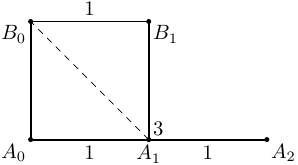}
		\qquad
		\includegraphics[height=2.6cm]{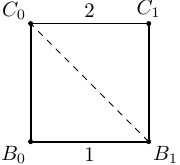}
		\]
		
		The first sail is the most informative,
		it shows all the positive elements of the 
		continued fraction $[1;1:2:0:0 \, |\, 1]$.
		As we will see later in  Proposition~\ref{On duality-prop},
		the first element of the Farey summation continued fraction is the first number in the top row; the second element is a half of the first number in the middle row; the third element  
		is the bottom number; the last element is the last number in the top row.

	\end{example}
	
	In order to further understand the coefficients of the LLS sequence we consider a longer example.
	
	\begin{example}
		Let us consider a vector
		\[
		v=
		(1656812331613081,18353000512178816,19770900109601816).
		\]
		Direct computation shows that its Farey summation continued fraction is
		\[
		[11;12:13:14:15:16:17\,|\, 100: 200: 300: 400].
		\]
		The LLS sequence of the sail for the first two masts is as follows:
		\[
		\includegraphics[height=2.5cm]{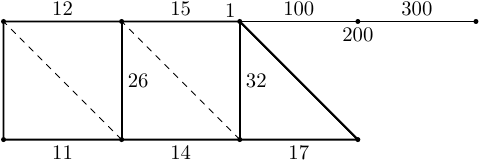}
		\]
		Here the first mast is the bottom one,
		the second mast is the top one.
		We write the corresponding integer lengths near the corresponding mast segments.
		
		\vspace{1mm}
		
		\noindent
		The non-horizontal segments with numbers indicate the common faces of two consecutive principal faces, the numbers are the corresponding integer sines of the angles between the planes.  
		
		\vspace{1mm}
		
		\noindent 
		Finally, dashed lines correspond to the yards joining the endpoints of the mast segments of the same principal face.
		(They indicate the order in which the continued fraction should be taken, here we add triangles from the left to the right). For the numbers on the triangles, we write the mast element first.
		
		\vspace{1mm}
		
		The elements of the Farey summation continued fraction of index $3i+1$ are on the first mast; of index $3i+2$ are on the second. In order to get the remaining elements, we divide the sequence of integer sines by the dimension of the lowest-dimension adjacent principal face, i.e. $26/2$, $32/2$, and $200/1$.  
		
		\vspace{2mm}
		
		Let us write the LLS sequences for the other two sails. The LLS sequence for the sail with the second and the third masts is follows:  
		\[
		\includegraphics[height=2.5cm]{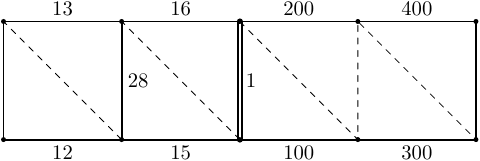}
		\]

		Finally the LLS sequence for the sail with the third and the first masts is
		\[
		\includegraphics[height=2.5cm]{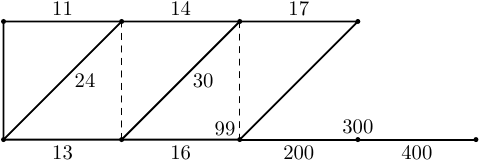}
		\]
		
	\end{example}

	\begin{remark}
		As we see the LLS sequences contain information on most of the elements of the continued fraction (except for the few first and last elements)
		This situation is similar to the classical case. It is due to the duality discussed in the next proposition. 
	\end{remark}
	
	We say that the sail {\it opposite} to a given mast is the sail that does not contain this mast.
	
	We say that two mast segments on different sails of the Farey polyhedron are {\it neighbours} if the lines containing them intersect.
	As we know from Subsection~\ref{A rigid structure of Farey masts, nose stretching}
	every mast segment contains precisely two neighbours (except for the first one and for the last one), and the intersection of the principal faces containing the two neighbours is a yard edge.

	\begin{proposition}\label{On duality-prop}{\bf (On duality of integer lengths and integer sines in sails.)}
		Consider a mast segment $M$ and let $\pi_1$ and $\pi_2$ be the planes of the principal faces of the opposite sail that contain neighbouring segments.
		We assume that the dimensions of $\pi_1$ and $\pi_2$ are both equal to $k$.
		Then
		the integer length of the mast segment $M$ multiplied by $k$ is the integer sine of the angle between $\pi_1$ and $\pi_2$, 
		\[
		\lsin(\pi_1,\pi_2)=k\cdot\il(M).
		\]
	\end{proposition}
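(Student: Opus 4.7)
The plan is to reduce everything to a local canonical chart and evaluate the integer sine by a direct determinant computation. By Proposition~\ref{basic-properties-FT}, all Farey simplices are integer-congruent to the coordinate simplex, so I would apply an integer affine transformation that identifies the current Farey basis immediately before the $a_s$ consecutive Farey steps generating $M$ with the standard basis $E_1,\ldots,E_n$. Writing $S = E_1+\cdots+E_n$ and supposing $M$ lies on mast $j$, the Farey apex at each of these $a_s$ steps is $S$, so after $t$ steps the updated vector is $V_j^{(t)} = E_j + t(S-E_j)$. In particular $M$ runs from $E_j$ to $E_j + a_s(S-E_j)$ with primitive direction $S-E_j$ and $\il(M)=a_s$.

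Next, I would identify the two principal faces $\pi_1$ and $\pi_2$ of the opposite-to-$j$ sail. Let $j''\neq j$ be the index updated at step $s-1$; then $V_{j''}^{\mathrm{old}} = 2E_{j''} - S$ was replaced by $E_{j''}$. Let $j'\neq j$ be the index updated at step $s+a_s$, producing $V_{j'}^{\mathrm{new}} = (a_s{+}1)S - a_sE_j$. The principal face $\pi_1$ is the $k$-simplex with vertices $V_{j''}^{\mathrm{old}}$, $E_{j''}$, and $k-1$ of the remaining standard basis vectors $E_i$ with $i\notin\{j,j''\}$; $\pi_2$ has analogous structure at step $s+a_s$. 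The two faces share a $(k-1)$-face whose vertices are the standard basis vectors common to both, so $\pi_1\cap\pi_2$ is a $(k-1)$-dimensional affine space. I would take $W$ to be a primitive integer basis of this intersection (consisting of differences $E_{i_r}-E_{i_0}$), $U = E_{j''}-V_{j''}^{\mathrm{old}} = S-E_{j''}$ as a primitive complement in $\pi_1$, and $V = V_{j'}^{\mathrm{new}}-E_{j'} = a_s(S-E_j) + (S-E_{j'})$ as a primitive complement in $\pi_2$. All three vectors are primitive in $\z^n$, so $\iv(U)=\iv(V)=\iv(W)=1$.

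Setting $U' = S-E_{j'}$, the difference $U'-U = E_{j''}-E_{j'}$ lies in the span of $W$ regardless of whether $j'=j''$, so by multilinearity $\iv(V,U,W) = a_s\,\iv(S-E_j,\,U,\,W)$. A direct evaluation, for instance by expanding along the rows corresponding to the $k-1$ shared basis vectors, gives $\iv(S-E_j,\,U,\,W) = \pm k$, hence $\isin(\pi_1,\pi_2) = k\cdot a_s = k\cdot\il(M)$.

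The main obstacle is this final determinant evaluation; the factor $k$ has a clean combinatorial source, namely that $S-E_j$ is the sum of exactly $k$ basis vectors, of which $k-1$ are absorbed into $W$ while the remaining transverse direction is shared with $U$. The factor $a_s$ enters linearly through the $a_s$-fold iteration of the Farey step. A short case analysis depending on whether $j'$ equals $j''$ confirms that the identification of the shared $(k-1)$-face is uniform and that the formulas for $U$, $V$, $W$ remain primitive and consistent in both cases.
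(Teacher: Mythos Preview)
Your approach is essentially the paper's: reduce to a local integer chart where the current yard is the standard basis, then compute the integer sine directly; the paper phrases this as passing to ``a short Farey summation continued fraction formed only by 3 positive elements $(a,b,c)$'' and splitting into the cases $j'=j''$ versus $j'\ne j''$, whereas you carry out the determinant explicitly and treat both cases at once via the observation $U'-U=E_{j''}-E_{j'}\in\operatorname{span}(W)$. The final evaluation $|\det(S-E_j,U',W)|=k$ is correct.

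Two small slips to clean up. First, the sentence ``the Farey apex at each of these $a_s$ steps is $S$'' is not literally true: the apex changes at every step. What is constant is the \emph{increment} $\sum_{i\ne j}E_i=S-E_j$, since the vertices $E_i$ with $i\ne j$ are fixed throughout the block; this is what gives $V_j^{(t)}=E_j+t(S-E_j)$. Second, you silently switch between $n$ and $k{+}1$: the chart you set up should have exactly $k{+}1$ active basis vectors (the number of vertices of the current yard), so that $S=E_1+\cdots+E_{k+1}$ and the principal faces are $k$-simplices as you describe. With those two fixes the argument is complete and matches the paper's.
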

	
	\begin{proof}
		The proof is straightforward. 
		Any two neighbouring planes in some coordinates are equivalent (after change of integer basis) to two principal faces of a short Farey summation continued fraction formed only by 3 positive elements, say $(a,b,c)$.
		Here each element corresponds to the integer length of one of the three masts.

		\vspace{1mm}
		
		Note that the elements $a$ and $c$ correspond to some mast segments of the sail, while $b$ corresponds to some mast segment of the opposite mast.
		There are only two different cases here (up to re-numeration of basis vectors):
		either the mast segments for $a$ and $c$ are on the same mast or not (i.e. the continued fraction is $[a;b:c]$ or $[a;b:0:c]$).
		
		\vspace{1mm}
		
		In both cases direct computations show that the integer sine equals $kb$.
		This concludes the proof.
	\end{proof}
	
	We discuss the three-dimensional case in more detail (including the cases of different dimensions of faces) in Subsection~\ref{LLS sequence and the elements} after we introduce the matrix form.

	\subsection{Semi-group of matrices by multiplication}
	\label{subsection: Semi-group of matrices}
	
	The multidimensional Farey summation algorithm can be described by matrix multiplication. 
	Each step of the algorithm defines a vector that is a vertex of the Farey polyhedron.
	The following step proceeds by multiplying this vector by an appropriate matrix.
	In this subsection we continue with the three-dimensional case in order to simplify the exposition.
	Nevertheless most of the definitions, notions, and statements have a straightforward generalisation to the multidimensional case. 
	We also specify the use of Farey summation continued fractions in this subsection (not Meester continued fractions).
	
	\subsubsection{Matrices associated to the steps of the algorithm}

	The algorithm begins in Stage~1 with two-dimensional yards; at that stage we employ  the matrices:
	\[
	A_1=\left(
	\begin{matrix}
		1 & 0  & 0\\
		1 & 1  & 0\\
		1 & 0  & 1 \\
	\end{matrix}
	\right),
	\qquad 
	A_2=\left(
	\begin{matrix}
		1 & 1  & 0\\
		0 & 1  & 0\\
		0 & 1  & 1 \\
	\end{matrix}
	\right),
	\quad \hbox{and} \quad
	A_3=\left(
	\begin{matrix}
		1 & 0  & 1\\
		0 & 1  & 1\\
		0 & 0  & 1 \\
	\end{matrix}
	\right).
	\]
	Once we arrive at one-dimensional yard (i.e. to Stage~2) we continue with matrices
	\[
	B_{ij}=
	\id + E_{i,j}, \quad \hbox{for $1\le i,j\le 3$ and $i\ne j$.}
	\quad 
	\]
	Here $\id$ is the identity matrix and 
	$E_{i,j}$ is a matrix with only one non-zero entry at position $(i,j)$ which is equal to $1$.
	
	\vspace{2mm}
	
	\subsubsection{Partial quotients}
	
	Let us continue with the notion of partial quotients.
	
	\begin{definition}\label{matrix-cf}
		Let $\alpha=[a_1:\cdots:a_n|b_1:\cdots :b_m]$ be a continued fraction. 
		Consider the matrix
		\[
		M=A_1^{a_1}A_2^{a_2}A_3^{a_3}A_1^{a_4}\ldots A_{k}^{a_n} 
		B_{st}^{b_1}B_{ts}^{b_2}B_{st}^{b_3}B_{ts}^{b_4}\ldots,
		\]
		where $k=n \mod 3$, $s=n+1\mod3$, and $t=n+2\mod3$.
		We say that the column vectors of $M$ form the integer basis {\it associated} with the continued fraction.
		We say that the matrix $M$ is the {\it continued fraction matrix} and denote it by $M(\alpha)$.
	\end{definition}
	
	\begin{remark}
		The matrix decomposition of Definition~\ref{matrix-cf} is extremely important. It provides an analytic continuation of the Farey summation algorithm to the case of arbitrary real elements. In the above definition one can take arbitrary real numbers $(a_i)$ and $(b_j)$, as these powers are well-defined for the matrices. Here one should replace the non-zero off-diagonal elements by the corresponding $(a_i)$ and $(b_i)$;   
		e.g.
		\[
		A_1^{a_1}=\left(
		\begin{matrix}
			1 & 0  & 0\\
			a_1 & 1  & 0\\
			a_1 & 0  & 1 \\
		\end{matrix}
		\right).
		\]
	\end{remark}
	
	\begin{remark}
		We would like to mention that the matrix multiplication order for Farey summation algorithm is as in Definition~\ref{matrix-cf} and not the inverse one. 
	\end{remark}

	\begin{definition}
		Let $\alpha$ be some continued fraction and let $\alpha_i$ be the continued fractions
		defined by the first $i$ elements of $\alpha$.
		We say that $\alpha_i$ is the  {\it $i$-th partial quotient} of $\alpha$, and call $M(\alpha_i)$ the {\it $i$-th partial quotient matrix of $\alpha$} and denote it by $M_i(\alpha)$.
	\end{definition}
	
	\begin{proposition}\label{matrix-product}
		Let $\alpha$ be a continued fraction for a vector $v$.
		Then the partial quotient matrix $M_i(\alpha)$ contains all the vectors of the $i$-th yard of the Farey polyhedron of $v$ as columns.
		\qed
	\end{proposition}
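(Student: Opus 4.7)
The plan is to prove the claim by induction on $i$, noting that at each step the algorithm's action on the yard vertices is precisely the right-multiplication of the current basis matrix by the corresponding elementary matrix $A_j$ or $B_{ij}$.

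First, I would handle the base case $i = 0$: the empty product gives $M_0(\alpha) = \id$, whose columns $(E_1, E_2, E_3)$ are the vertices of the zeroth yard (the deck) by Algorithm~\ref{definition: farey summation algorithm}. Next, I would verify two key column-operation identities by direct calculation. If $M$ has columns $(v_1, v_2, v_3)$, then:
\begin{itemize}
    \item $M \cdot A_j$ agrees with $M$ in every column except the $j$-th, which becomes $v_1 \oplus v_2 \oplus v_3$. This is precisely the Farey sum replacement performed in Stage~1 when the $j$-th vertex is changed.
    \item $M \cdot B_{ij}$ agrees with $M$ in every column except the $j$-th, which becomes $v_i + v_j$. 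This is exactly the Stage~2 step in which the $j$-th vertex is replaced by the sum of the two surviving nonzero vertices.
\end{itemize}

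The inductive step then runs as follows. Assume the columns of $M_{i-1}(\alpha)$ list the vertices of the yard reached after the first $i{-}1$ partial quotients. Writing $M_i(\alpha) = M_{i-1}(\alpha) \cdot X^{a_i}$ for the appropriate $X \in \{A_1, A_2, A_3, B_{st}, B_{ts}\}$ prescribed by the syntactic position of $a_i$ in $\alpha$, one applies the identity above $a_i$ times. By Algorithm~\ref{definition: farey summation algorithm} and the definition of the nose stretching procedure in Subsection~\ref{A rigid structure of Farey masts, nose stretching}, the resulting triple of columns is exactly the yard obtained from the previous one by $a_i$ consecutive Farey summation steps with the prescribed index. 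This closes the induction.

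The main obstacle will be the careful treatment of the transition from Stage~1 to Stage~2. I must check that the cyclic indices $s,t$ fixed in Definition~\ref{matrix-cf} via $s \equiv k{+}1, \, t \equiv k{+}2 \pmod 3$ agree with the indices of the two nonzero coordinates of the yard at the moment the dimension drops, and that the zero exponents $a_i = 0$ permitted just before the separator $|$ correspond to trivial (do-nothing) iterations matching the unchanged yard. Verifying these syntactic-geometric compatibilities is essentially the bookkeeping guaranteed by the structure of Farey summation continued fractions described in Definition~\ref{farey continued fractions}; once this alignment is established, the induction proceeds uniformly across both stages.
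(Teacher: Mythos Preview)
Your proposal is correct. The paper gives no proof of this proposition at all (it is marked with \qed\ immediately after the statement, treating it as a direct consequence of the definitions), so your inductive verification via the column-operation identities for $A_j$ and $B_{ij}$ is exactly the routine check the authors leave to the reader.
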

	
	In particular if we have the following statement.
	
	\begin{corollary}
		Let $\alpha$ be a finite continued fraction for $v$.
		Then the continued fraction matrix $M(\alpha)$ contains the vector $v$ as a column.
		\qed
	\end{corollary}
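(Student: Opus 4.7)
The plan is to read this off directly from Proposition~\ref{matrix-product} applied at the terminal step of the Farey summation algorithm. Writing $N$ for the total number of steps (that is, the sum of all entries $a_i$ and $b_j$ in $\alpha$), I would identify $M(\alpha)$ with the partial quotient matrix $M_N(\alpha)$ from Definition~\ref{matrix-cf}, since the product that defines $M(\alpha)$ is exactly the one obtained after all $N$ matrix multiplications have been performed.

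Next I would invoke Proposition~\ref{matrix-product} with $i = N$: its columns are precisely the vertices of the $N$-th yard of the Farey summation algorithm. Because $\alpha$ is finite, Algorithm~\ref{definition: farey summation algorithm} terminates at step $N$, and by the termination rule the $N$-th yard is the single vertex called the pennant. By the construction of the Farey summation algorithm the pennant is $v$ itself (this is exactly the situation illustrated, for instance, by the last column of the terminal matrix in Example~\ref{(5,7,8)-stretching}, where the two non-pennant columns have been zeroed out). Hence $v$ appears as a column of $M_N(\alpha) = M(\alpha)$, which is the desired conclusion.

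There is really no obstacle here: the entire content of the corollary is the specialisation of Proposition~\ref{matrix-product} to the last step, combined with the observation that the terminal yard is the pennant and that the pennant equals~$v$. The only small point worth writing out carefully is the bookkeeping that the exponential matrix product in Definition~\ref{matrix-cf} agrees with the step-by-step product $M_N(\alpha)$, which is immediate from how the exponents are grouped.
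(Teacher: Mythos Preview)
Your argument is correct and is exactly what the paper intends: the corollary is marked with a bare \qed, treating it as an immediate specialisation of Proposition~\ref{matrix-product} to the final partial quotient. One minor bookkeeping slip worth fixing: the partial quotient matrices $M_i(\alpha)$ are indexed by the number of \emph{elements} of $\alpha$, not by the sum of the entries (so for $[1;1:2:0:0\,|\,1]$ one has $M(\alpha)=M_6(\alpha)$, not $M_5(\alpha)$); with $N$ taken as the length of the sequence your identification $M(\alpha)=M_N(\alpha)$ is exactly right.
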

	
	\vspace{1mm}
	
	\begin{remark}
		\label{remark: matrix form gives only IV=1 simplices}
		It turns out that matrices $A_1$, $A_2$, and $A_3$ form a freely generated semi-group (as they correspond to different triangles in the tessellation). 
		Their products applied to $(1,0,0)$ generate all integer vectors $v$ of $\z S^2$ whose continued fractions have empty sequence $(b_i)$ (i.e. no dimension drop).
		It is clear that we cannot obtain the integer vectors of $\z S^2$ that have a non-zero $(b_i)$ sequence.
		For instance,  $v=(6,14,15)$ from Example~\ref{example-not all tetrahedra} is one of such vectors, it has continued fraction $[2;1:2|_2\, 0:0:2 ]$.
	\end{remark}
	
	We arrive to the following natural question.
	
	\begin{problem}
		Describe the set of matrices $M(\alpha)$ where $\alpha$ has a finite sequence $(a_i)$ and an empty sequence $(b_j)$.  
	\end{problem}
	
	\begin{remark}
		Now we have a simple way to compute all Farey simplices in the Farey polyhedron of a given vector $v$.
		\begin{itemize}
			\item 
			First of all the Meester algorithm produces the Meester continued fraction for $v$ that we convert to the Farey summation continued fraction $\alpha(v)$ by way of Proposition~{\ref{prop: both cf the same}}.
			
			\item 
			Secondly the matrices $M_{i-1}(\alpha(v))$ and $M_i(\alpha(v))$ provide all the vertices of the $i$-th Farey simplex for $v$.
		\end{itemize}
		Here the matrix $M_{i-1}^{-1}(\alpha(v))\cdot M_{i}(\alpha(v))$ tells us which columns are in the simplex. (This matrix is either one of the $A$ or $B$ matrices in position $i$ of the decomposition of Definition~\ref{matrix-cf}.)
	\end{remark}
	
	\begin{remark}
		The matrix multiplication introduced in this subsection can be extended to the higher-dimensional cases in a straightforward way. Since we mostly work in the three-dimension case in what follows, we skip the multidimensional notation here.  
	\end{remark}

	\begin{example}\label{(5,7,8)-part matrices}
		Consider $v=(5,7,8)$ from Example~\ref{(5,7,8)-part 1}.
		As we know from Example~\ref{(5,7,8)-part 2}, the continued fraction for $(5,7,8)$
		is
		\[
		\alpha(v)=[1;1:2:0:0 \, |\, 1].
		\]
		Hence 
		\[
		M(\alpha(v))=A_1A_2A_3^2A_1^0A_2^0B_{31}.
		\]
		It is interesting to note that $v$ can be alternatively obtained 
		from $[1;1:2:0:0 \, |\, 0:1]$.
	\end{example}
	
	\subsection{LLS sequence in the three-dimensional case}
	\label{LLS sequence and the elements}

	In this subsection we discuss the three-dimensional LLS sequence in more detail.
	
	\subsubsection{Values of integer sines in the LLS sequence}

	\vspace{2mm}
	Let us discuss how to get the values of the elements of the Farey continued fraction from LLS sequences of a canonical prismatic flag diagram $T$. 
	Without loss of generality we consider LLS sequence of $\sail_2(T)$, the sail defined by the first and third masts.
	The integer lengths of the mast segments for the first and third masts are shown on the top and the bottom horizontal lines of the sail, which represent masts 1 and 3; they provide nearly two thirds of all the elements of the continued fraction.

	The elements represented by integer lengths of mast segments of mast 2 are not present in our LLS sequence. 
	However, most of these elements are reconstructable from the LLS sequence.
	
	These elements are integer sines of two consecutive faces of $\sail_2(T)$. 
	Such faces are generated by two, three, or four consecutive division tetrahedra (see Definition~\ref{division simplex}). 
	In fact, four consecutive division tetrahedra are needed only in a very specific case when we drop dimension (represented in line $7$ of Table~\ref{duality-table} below); that might happen at most once for every continued fraction.
	
	We have the following different situations for the values of integer sines (without loss of generality we list only the cases when we start with mast 1):
	\begin{longtable}{|l|l|c|}
		\caption{Matrix decomposition and the corresponding values of the LLS sequences.\label{duality-table}}\\
		\hline
		Matrix decomposition & part of the LLS sequences& $\isin$ \\
		\hline       
		$A_1^{a}A_3^{b}$, \quad
		$B_{13}^aB_{31}^b$  & 
		$
		\begin{array}{c}
			\includegraphics[width=0.075\textwidth]{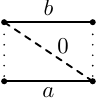}
		\end{array}
		\quad
		\begin{array}{c}
			\includegraphics[width=0.15\textwidth]{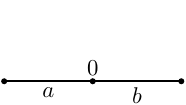}
		\end{array}
		$
		&
		$0$ \\
		\hline
		$A_1^{a}B_{13}^{b}$,
		\quad
		$A_1^{a}B_{31}^{b}$,
		\quad
		$A_1^{a}B_{12}^{b}$
		&
		$
		\begin{array}{c}
			\includegraphics[width=0.15\textwidth]{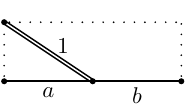}
		\end{array}
		\quad
		\begin{array}{c}
			\includegraphics[width=0.075\textwidth]{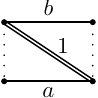}
		\end{array}
		\quad
		\begin{array}{c}
			\includegraphics[width=0.15\textwidth]{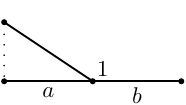}
		\end{array}
		$
		& $1$\\
		\hline
		\hline
		$A_1^{a}A_2^{x}A_1^{b}$, \quad 
		$A_1^{a}A_2^{x}A_3^{b}$
		& 
		$
		\begin{array}{c}
			\includegraphics[width=0.15\textwidth]{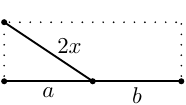}
		\end{array}
		\quad
		\begin{array}{c}
			\includegraphics[width=0.075\textwidth]{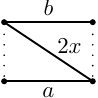}
		\end{array}
		$
		&
		$2x$\\
		\hline
		$B_{12}^{a}B_{21}^xB_{12}^b$
		&
		$
		\begin{array}{c}
			\includegraphics[width=0.15\textwidth]{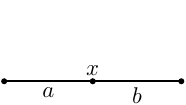}
		\end{array}
		$
		& $x$\\
		\hline
		\hline
		$A_1^{a}A_2^{x}B_{12}^{b}$,
		\quad
		$A_1^{a}A_2^{x}B_{32}^{b}$
		&
		$
		\begin{array}{c}
			\includegraphics[width=0.15\textwidth]{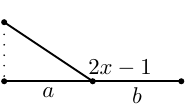}
		\end{array}
		\quad
		\begin{array}{c}
			\includegraphics[width=0.075\textwidth]{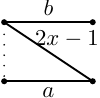}
		\end{array}
		$
		&$2x-1$\\
		\hline
		$A_1^{a}B_{21}^{x}B_{12}^{b}$,
		\quad
		$A_1^{a}B_{23}^{x}B_{32}^{b}$
		&
		$
		\begin{array}{c}
			\includegraphics[width=0.15\textwidth]{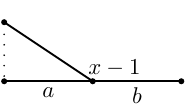}
		\end{array}
		\quad
		\begin{array}{c}
			\includegraphics[width=0.075\textwidth]{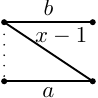}
		\end{array}
		$
		&  $x-1$\\
		\hline
		\hline
		$A_1^aA_2^xB_{21}^yB_{12}^b$,
		\quad 
		$A_1^aA_2^xB_{23}^yB_{32}^b$
		&
		$
		\begin{array}{c}
			\includegraphics[width=0.15\textwidth]{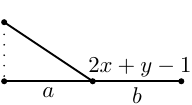}
		\end{array}
		\quad
		\begin{array}{c}
			\includegraphics[width=0.09\textwidth]{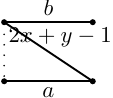}
		\end{array}
		$
		&
		$2x+y-1$\\
		\hline
		$A_1^{a}A_2^{x}B_{13}^{b}$,
		\quad
		$A_1^{a}A_2^{x}B_{31}^{b}$
		&
		$
		\begin{array}{c}
			\includegraphics[width=0.15\textwidth]{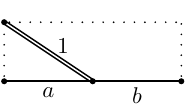}
		\end{array}
		\quad
		\begin{array}{c}
			\includegraphics[width=0.09\textwidth]{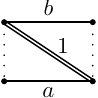}
		\end{array}
		$
		&$1$\\
		\hline
		$A_1^{a}B_{32}^{x}B_{23}^{b}$
		&
		$
		\begin{array}{c}
			\includegraphics[width=0.075\textwidth]{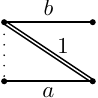}
		\end{array}
		$
		&$1$\\
		\hline
	\end{longtable}
	
	In Table~\ref{duality-table} we show schematically a part of the matrix decomposition for a continued fraction and the corresponding part of the LLS sequence. Here the bottom and the top horizontal lines correspond to masts 1 and 3 respectively.
	Note that the integers $a$, $b$, $x$, and $y$ in the table are assumed to be positive.

	\vspace{2mm}

	In the first two lines of the table there are no division tetrahedra representing the elements of mast~2; here the values on the sail-yards are either $0$ or $1$.
	\vspace{2mm}

	In the next two lines we generate the most common steps when the dimension is not dropped.
	In the third line we get the case of three-dimensional division tetrahedra where the value of the integer sine is $2x$.
	In the fourth line we show the two-dimensional division tetrahedra with the value of the integer sine being equal to $x$.
	This repeats the case of the classical two-dimensional LLS sequence.
	
	\vspace{2mm}
	
	Further in lines~5 and~6 we have two cases where the algorithm loses one dimension but the elements of the continued fractions are still visible on the sail.
	
	\vspace{2mm}
	
	Finally in the last three lines we have the cases when we lose information on one or two of the elements.
	
	\vspace{2mm}
	Let us collect the above observations in the following theorem.
	
	\begin{theorem}\label{3D-LLS-invariance}
		The three-dimensional LLS sequence is an integer invariant 
		of prismatic diagrams \emph{(}and hence, by Theorem~\ref{theorem: prismatic diagrams invariant}, also of Farey polyhedra\emph{)}.
	\end{theorem}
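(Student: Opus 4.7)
The plan is to reduce invariance to two separate facts: invariance of the combinatorial shell of the LLS-sequence, and invariance of the numerical labels attached to it. For the shell, Theorem~\ref{theorem: prismatic diagrams invariant} already tells us that the canonical prismatic flag diagram is a complete invariant of the Farey polyhedron; the sails, their principal faces, their masts, their mast segments, and the distinguished ``double'' and ``dashed'' yard edges are all extracted from the diagram by purely combinatorial rules (the principal/non-principal distinction from Algorithm~\ref{definition: farey summation algorithm}, and the dimension-drop marker at steps where a mast terminates). Thus the graph on which the LLS-sequence is drawn is manifestly an invariant, and it suffices to show that the integer labels on it are invariants.

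For the labels themselves I would invoke the definitions of Subsection~\ref{int geo subsection}: integer length and integer volume are defined via index of a sublattice in $\z^n$, which is by construction preserved by $\GL(n,\z)$ together with integer translations, and hence $\isin$ (a ratio of integer volumes by Definition of integer sine) is also preserved. Therefore each individual label attached to the diagram — integer lengths of mast segments and integer sines between spans of consecutive principal faces — is invariant under the group $\aff(n,\z)$, and in particular is unchanged if we replace the Farey polyhedron by an integer-congruent one.

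It remains to confirm that these labels actually only depend on the prismatic flag diagram, not on the particular realisation. For this I would use the matrix decomposition of Subsection~\ref{subsection: Semi-group of matrices}: by Proposition~\ref{matrix-product} the columns of the partial quotient matrices are precisely the vertices of successive yards, so two consecutive principal faces are recorded, in a basis chosen from the previous yard, by the product of the matrix factors (of type $A_i$ or $B_{ij}$) that sit between them in the decomposition. Table~\ref{duality-table} then lists, case by case, every possible string of such factors that can appear between two principal faces, and assigns to it a formula for $\isin$ in terms of the exponents alone. Hence each label is a function of the continued fraction, which by Definition~\ref{farey continued fractions} and the construction of $T(\alpha)$ is recoverable from the prismatic flag diagram (the LR-sequences of the constituent canonical diagrams together with the indices $S_i$).

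The main obstacle is verifying the entries of Table~\ref{duality-table}: one must check, for each of the finitely many patterns in which consecutive principal faces can occur (three-dimensional/three-dimensional with no dimension drop, mixed-dimensional with one drop, and the one-off configuration in which two masts terminate at adjacent steps), that the integer sine computed in the standard basis of the local matrix decomposition indeed equals the formula stated. Each such verification is a direct computation of $\iv$ of three small matrices obtained by multiplying a bounded number of $A_i$'s and $B_{ij}$'s; the only subtle point is the mixed row where one face has already lost a dimension, since there one must pick the complements $U,V,W$ of $\isin$ consistently with the surviving masts. Once the table is validated, combining the three ingredients — combinatorial invariance of the diagram, $\aff(n,\z)$-invariance of $\il$ and $\isin$, and the table reading each label off from the continued fraction — yields the theorem.
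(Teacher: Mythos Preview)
Your approach is correct and matches the paper's: both arguments reduce the claim to (i) the combinatorial shell of the LLS-sequence being read off the prismatic diagram, and (ii) the numerical labels (integer lengths and integer sines) being determined by the mast lengths via Table~\ref{duality-table} and Proposition~\ref{On duality-prop}. The paper's proof is a terse two-line summary of exactly this, taking the $\aff(n,\z)$-invariance of $\il$ and $\isin$ and the verification of Table~\ref{duality-table} as already established, whereas you spell these out explicitly; the underlying logic is the same.
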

	
	\begin{proof}
		Note that the values of the LLS sequence are completely given by the integer lengths of masts and the combinatorics of prismatic diagrams.
		Here the integer sines of the LLS sequence are written in terms of the opposite mast, as in Table~1 (see also Proposition~\ref{On duality-prop}).
	\end{proof}
	
	\begin{remark}
		Recall that the powers in the matrix form coincide with integer length of the elements in the corresponding masts.
	\end{remark}
	
	\begin{remark}
		The situation is similar in four and higher-dimensions.
		The number of cases will be finite but growing rather fast with the dimension.   
		We omit the exhaustive study of all the cases here.
	\end{remark}

	\subsubsection{Integer arctangent of cones in three-dimensions and sails}
	
	As we have seen, in the last two rows of Table~\ref{duality-table} the LLS sequence does not catch $x$. 
	However the integer invariants of the sail still encode $x$. 
	First we need to set some further integer invariants. 
	
	\vspace{2mm}
	A \textit{cone} is the convex hull of three rays emanating from the same vertex.
	We call the rays the \textit{edges of the cone}.
	A cone is {\it integer} if its vertex is integer. 
	An integer cone is \textit{rational} if each of its rays contain integer points distinct from the vertex.

	\begin{definition}
		
		Let $\alpha$ be a rational cone.
		Assume there is an integer basis in which the vectors of unit integer length that generate the edges of $\alpha$ form (as columns) a matrix
		\[
		\left(
		\begin{matrix}
			1&a_1&b_1\\
			0&a_2&b_2\\
			0&0&b_3\\ 
		\end{matrix}
		\right)
		\]
		satisfying $a_2>a_1>0$, $b_3>b_1\ge 0$, and $b_3>b_2\ge 0$.
		We call this matrix the \textit{integer arctangent} of the cone, and call its elements \textit{integer sines} and \textit{integer cosines}, labelled as follows:
		\[
		\left(
		\begin{matrix}
			1&\icos_{1,2}\alpha&\icos_{1,3}\alpha\\
			0&\isin_1\alpha & \icos_{2,3}\alpha \\
			0&0&\isin_2 \alpha\\ 
		\end{matrix}
		\right).
		\]
	\end{definition}
	
	It turns out that the integer arctangent of a rational cone always exists and is unique. 
	For a proof and further information about integer trigonometry, including higher-dimensional cases, we refer the reader to~{\cite{blackman2023}}.
	
	\begin{proposition}
		There exists a unique integer arctangent for every rational cone.
	\end{proposition}
	
	Note that the integer arctangent of $\alpha$ coincides with the Hermite normal form of the matrix of unit vectors generating the edges of $\alpha$. 
	All the coefficients of this matrix are integer invariants of the cone. 
	
	\subsubsection{Application to sails}
	
	Let us go back to the last three lines of the table. More precisely we illustrate the case $A_1^aA_2^xB_{23}^yB_{32}^b$ of line~7.
	
	\vspace{1mm}
	
	As we see, the integer sine in the LLS sequence provides us the value $2x+y-1$ for the elements $x$ and $y$ (these are the lengths of mast segments of mast $2$ which is not in our sail). 
	Our aim is to construct an integer invariant of the sail that will provide us another equation on $x$ and $y$.
	
	\vspace{2mm}
	
	Consider the simplicial cone $\alpha$ on the vectors generated by:
	
	\begin{itemize}
		\item  the first mast segment of mast $1$; 
		
		\item the yard edge connecting the endpoint of the first segment of mast $3$ with the second vertex of mast $1$;
		
		\item the first mast segment of mast $3$.
	\end{itemize}
	(All these edges are in the sail opposite to the second mast.)
	By definition the arctangent of $\alpha$ is 
	\[
	\left(
	\begin{matrix}
		1 & 0  & x+1\\
		0 & 1  & x+y-1\\
		0 & 0  & 2x+y-1 \\
	\end{matrix}
	\right).
	\]
	From the last column we have:
	\[
	\icos_{1,3}\alpha =x+1, \quad 
	\icos_{2,3}\alpha=x+y-1,\quad 
	\isin_2\alpha=2x+y-1.
	\]
	
	\begin{remark}
		In particular $\isin_2\alpha$ is the value that we have computed for the $LLS$-sequence,
		while the $\icos_{1,3}\alpha$ provides the value of $x$.
	\end{remark}

	\subsection{Continuants}
	\label{subsection: continuants}
	
	In this subsection we briefly discuss the notion of continuants for Farey summation continued fractions.
	
	\vspace{2mm}
	
	Most Jacobi-Perron algorithms are defined by a linear recursion. 
	In the case of the Farey summation algorithm we have the following recursion:
	\begin{equation}\label{recursive-noses}
		\begin{array}{l}
			v_1=(1,0,0); \qquad 
			v_2=(0,1,0); \qquad
			v_3=(0,0,1);\\
			v_{i+3}=v_{i}+a_i(v_{i+1}+v_{i+2})
			\quad \hbox{for $i=1,2,\ldots$}.
		\end{array}
	\end{equation}
	This rule give rise to special functions that are called {\it continuants}.
	
	\vspace{2mm}
	
	In general, most continued fraction algorithms based on matrix multiplication generate their own (natural) notion of continuant.
	It is done as follows.
	
	\vspace{1mm}
	
	Firstly one fixes the generating family of matrices $S(n)$ (or sometimes several families) parametrised by a non-negative integer parameter $n$. In the case of Farey summation continued fractions that is
	\[
	S(n)=
	\left(
	\begin{matrix}
		n & 1 & 0 \\
		n & 0 & 1 \\
		1 & 0 & 0 \\
	\end{matrix}
	\right).
	\]
	Here the matrix $S(n)$ is a composition of $A_1^n$ and a permutation of basis vectors.
	
	\vspace{1mm}
	
	Further we fix the following notation:
	\[
	M_n(a_1,\ldots,a_n)=
	\prod\limits_{i=1}^n S(a_i).
	\]
	Set also
	\[
	v_n(a_1,\ldots,a_n)=M_n(a_1,\ldots,a_n)
	\cdot (1,0,0)^\top.
	\]
	The coefficients of $M_n$ in variables $(a_1,\ldots,a_n)$ are used to determine the continuants. That is suggested by 
	the following two obvious relations on the coefficients of $M_n$:
	\begin{itemize}
		\item
		{\it recursive relation}: 
		$M_n(a_1,\ldots,a_n)=M_{n-1}(a_1,\ldots,a_{n-1})\cdot S(a_n)$;
		\item {\it anti-recursive relation}:
		$M_n(a_1,\ldots,a_n)=S(a_1)M_{n-1}\cdot (a_2,\ldots,a_n)$.
	\end{itemize}
	We synthesise the continuants as follows.

	\begin{definition}
		We define the \textit{$n$-th Farey continuant} iteratively
		\[
		\begin{array}{l}
			K_0=1;\\
			K_1(x_1)=x_1;\\
			K_2(x_1,x_2)=(x_1+1)x_2;\\
			K_n(x_1,\ldots,x_n)=
			x_n\big(
			K_{n-1}(x_1,\ldots,x_{n-1})+
			K_{n-2}(x_1,\ldots,x_{n-2})
			\big)+
			K_{n-3}(x_1,\ldots,x_{n-3}).
		\end{array}
		\]
	\end{definition}
	
	\begin{example}
		Continuants of integer sequences are integers. 
		For instance
		\[
		\begin{aligned}
			K_3(2,3,4)&=45,\\ 
			K_6(15,2,4,32,54,7)&=2800350.
		\end{aligned}
		\]
	\end{example}
	
	\begin{remark}
		Note that, unlike in the two-dimensional case, we do not have the reverse symmetry for continuants:
		\[
		K_n(x_1,\ldots,x_n) \neq K_n(x_n,\ldots,x_1)
		\]
		Instead, from a simple inductive argument we find a separate recursive formula:
		\[
		K_n(x_1,\ldots,x_n)=x_1K_{n-1}(x_2,\ldots,x_n)+x_2K_{n-2}(x_3,\ldots,x_n)+K_{n-3}(x_4,\ldots,x_n).
		\]
	\end{remark}
	
	The recursive and anti-recursive relations imply the following expression for all the coefficients of $M_n$ in terms of continuant functions.
	
	\begin{proposition}
		\label{prop: cf and continuants}
		We have
		\[
		M_n(a_1,\ldots,a_n)=
		\Big(v_n(a_1,\ldots,a_n),
		v_{n-1}(a_1,\ldots,a_{n-1}),
		v_{n-2}(a_1,\ldots,a_{n-2})\Big),
		\]
		where
		\[
		v_i(a_1,\ldots,a_{i})=
		\left(
		\begin{matrix}
			K_{i}(a_1,\ldots,a_{i})\\
			a_1K_{i-1}(a_2,\ldots,a_{i})+
			K_{i-2}(a_3,\ldots,a_{i})\\
			K_{i-1}(a_2,\ldots,a_{i})
		\end{matrix}
		\right).
		\]
		
	\end{proposition}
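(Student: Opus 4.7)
The plan is to prove the proposition by induction on $n$, using the anti-recursive relation $M_n(a_1,\ldots,a_n) = S(a_1) \cdot M_{n-1}(a_2,\ldots,a_n)$ noted just before the proposition. I would use the anti-recursion rather than the forward one because the second-coordinate entry of $v_i$ is given by the expression $a_1 K_{i-1}(a_2,\ldots,a_i) + K_{i-2}(a_3,\ldots,a_i)$, which naturally factors out an $a_1$ acting on the left; accordingly, $S(a_1)$ produces exactly this shape from a single continuant in the third coordinate of the previous step.

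For the inductive step, suppose the formula holds for $M_{n-1}(a_2,\ldots,a_n)$, whose columns are (by hypothesis) $v_{n-1}(a_2,\ldots,a_n)$, $v_{n-2}(a_2,\ldots,a_{n-1})$, $v_{n-3}(a_2,\ldots,a_{n-2})$. Left multiplication by
\[
S(a_1)=\begin{pmatrix} a_1 & 1 & 0 \\ a_1 & 0 & 1 \\ 1 & 0 & 0 \end{pmatrix}
\]
sends a column $(\alpha,\beta,\gamma)^\top$ to $(a_1\alpha+\beta,\, a_1\alpha+\gamma,\, \alpha)^\top$. Applied to the first column $v_{n-1}(a_2,\ldots,a_n)$, the third output coordinate $K_{n-1}(a_2,\ldots,a_n)$ is exactly the third coordinate prescribed for $v_n(a_1,\ldots,a_n)$; the second output coordinate $a_1 K_{n-1}(a_2,\ldots,a_n) + K_{n-2}(a_3,\ldots,a_n)$ matches the definition of the second coordinate of $v_n(a_1,\ldots,a_n)$ on the nose; and the first output coordinate
\[
a_1 K_{n-1}(a_2,\ldots,a_n) + a_2 K_{n-2}(a_3,\ldots,a_n) + K_{n-3}(a_4,\ldots,a_n)
\]
collapses to $K_n(a_1,\ldots,a_n)$ by the alternative recursion recorded in the remark immediately preceding the proposition. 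The identical manipulation applied to the second and third columns produces $v_{n-1}(a_1,\ldots,a_{n-1})$ and $v_{n-2}(a_1,\ldots,a_{n-2})$, closing the induction.

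The main obstacle is the alternative left-end recursion for $K_n$ itself, which the paper records as a remark but does not prove. I would dispatch it by a short separate induction from the defining right-end recursion of $K_n$; this is a standalone identity about continuants and requires no input from the matrix side. A secondary bookkeeping issue is fixing the conventions for degenerate continuants ($K_m = 0$ for $m<0$, $K_0=1$) and for the first few columns in the base cases $n=1,2,3$ so that $M_0=I$ and $M_1=S(a_1)$ match the formula with $v_0=(1,0,0)^\top$, $v_{-1}=(0,1,0)^\top$, $v_{-2}=(0,0,1)^\top$; these cases are verified by direct substitution and also provide the anchor for the anti-recursive induction.
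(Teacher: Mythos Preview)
Your proposal is correct and follows essentially the same approach as the paper. The paper's proof is a two-line sketch: it uses the \emph{recursive} relation $M_n=M_{n-1}S(a_n)$ to see that the second and third columns of $M_n$ are $v_{n-1}$ and $v_{n-2}$ (since right-multiplication by $S(a_n)$ shifts the first two columns of $M_{n-1}$ to positions two and three), and then the \emph{anti-recursive} relation $M_n=S(a_1)M_{n-1}(a_2,\ldots,a_n)$ to derive the continuant formula for $v_i$, exactly via the computation you wrote out; you fold both steps into a single anti-recursive induction, which is a harmless organisational variation, and you correctly flag that the left-end recursion for $K_n$ (stated in the remark just above the proposition) is the one extra ingredient needed.
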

	
	\begin{proof}
		The expression for $M_n$ in terms of $v_n$, $v_{n-1}$ and $v_{n-2}$ follows from the recursive relation.
		The expression of $v_i$ via continuants is a consequence of the anti-recursive relation. 
	\end{proof}
	
	\begin{remark}
		Note that 
		\[v_n(a_1,\ldots, a_n)=[a_1;\cdots: a_n\, | \,].
		\]
	\end{remark}
	
	\begin{remark}
		\label{remark: stage of continuants}
		The above is entirely related to Stage~1 (namely to matrices $A_1,A_2,A_3$). 
		Stage 2 for $B_{ij}$ employs classical continuants.    
	\end{remark}

	\begin{example}
		Not all integer vectors of unit length are reached by continuants.
		As we have seen before, we sometimes need to use multiplication with $B_{ij}$ matrices
		(see Example~\ref{example-not all tetrahedra}.)
	\end{example}
	
	\begin{proposition}
		Consider an integer vector $p$ with positive coordinates and with unit integer distance to the origin. 
		Let us assume that 
		\[
		p=v_n(a_1,\ldots, a_n)
		\]
		for some non-negative integers $a_i$ 
		$($no two consecutive zeros except possibly at the start, $a_n\ge 2$$)$.
		Then we also have
		\[
		p=v_{n+1}(a_1,\ldots, a_n{-}1,1)=
		v_{n+2}(a_1,\ldots, a_n{-}1,0,1).
		\]
	\end{proposition}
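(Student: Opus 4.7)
The plan is to reduce the claim to a short matrix computation via Proposition~\ref{prop: cf and continuants}. Recall that $v_k(a_1,\ldots,a_k) = M_k(a_1,\ldots,a_k) \cdot (1,0,0)^\top$ where $M_k = \prod_{i=1}^{k} S(a_i)$. Factoring out the common prefix $M_{n-1}(a_1,\ldots,a_{n-1}) \cdot S(a_n{-}1)$ from all three expressions on the right of the claim, it suffices to verify the two vector identities
$$
S(a_n) \cdot (1,0,0)^\top
\;=\; S(a_n{-}1) \cdot S(1) \cdot (1,0,0)^\top
\;=\; S(a_n{-}1) \cdot S(0) \cdot S(1) \cdot (1,0,0)^\top.
$$

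For the first identity, a direct calculation from the definition
$$
S(k) = \begin{pmatrix} k & 1 & 0 \\ k & 0 & 1 \\ 1 & 0 & 0 \end{pmatrix}
$$
gives $S(1)\cdot(1,0,0)^\top = (1,1,1)^\top$ and $S(k)\cdot(1,1,1)^\top = (k{+}1,k{+}1,1)^\top$, which coincides with $S(k{+}1)\cdot(1,0,0)^\top$. Setting $k = a_n{-}1$ yields the first equality. For the second identity, the key observation is that $S(0)$ is a cyclic permutation matrix fixing the vector $(1,1,1)^\top$; indeed $S(0)\cdot(1,1,1)^\top = (1,1,1)^\top$. Hence inserting $S(0)$ between $S(a_n{-}1)$ and $S(1)$ does not alter the image of $(1,0,0)^\top$.

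Finally, the hypothesis $a_n \ge 2$ ensures $a_n{-}1 \ge 1$, so the new sequences $(a_1,\ldots,a_n{-}1,1)$ and $(a_1,\ldots,a_n{-}1,0,1)$ contain no two consecutive zeroes and thus remain admissible as Farey summation continued fractions in the sense of Subsection~\ref{subsection: farey summation cf}. There is no real obstacle in the argument: once the fixed-point property $S(0)(1,1,1)^\top = (1,1,1)^\top$ and the shift relation $S(k)(1,1,1)^\top = S(k{+}1)(1,0,0)^\top$ are noticed, the whole claim collapses to these two three-dimensional vector computations. Geometrically, the identity simply encodes the fact that the pennant may be reached either by taking $a_n$ mast steps in one direction, or by taking $a_n{-}1$ steps followed by a final unit extension — the rigid nose-stretching freedom already discussed in Subsection~\ref{subsubsection: nose stretching}.
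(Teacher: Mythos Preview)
Your proof is correct. The two key computations---$S(1)(1,0,0)^\top=(1,1,1)^\top$, $S(k)(1,1,1)^\top=(k{+}1,k{+}1,1)^\top=S(k{+}1)(1,0,0)^\top$, and the fixed-point observation $S(0)(1,1,1)^\top=(1,1,1)^\top$---are all accurate, and after left-multiplication by $M_{n-1}(a_1,\ldots,a_{n-1})$ they yield the claim. (One minor wording slip: the common prefix you actually cancel is $M_{n-1}$, not $M_{n-1}\cdot S(a_n{-}1)$, since $S(a_n)$ does not literally factor through $S(a_n{-}1)$ as a matrix product; your displayed identity already has this right.)

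The paper takes a different route. Its proof is a single sentence appealing to the geometric fact that the Farey tetrahedra in the tessellation do not intersect (Proposition~\ref{basic-properties-FT-extended}), implicitly combined with the discussion of extended Meester continued fractions in Definition~\ref{extended-cf}: the three sequences are all admissible continued fractions whose final Farey pyramids sit inside the same tetrahedron, so they must share the same pennant. Your argument is more elementary and entirely self-contained---it needs nothing about the global tessellation and reduces everything to a $3\times 3$ matrix identity---whereas the paper's argument is terser but relies on earlier structural results and, in return, explains \emph{why} exactly three representations exist (one for each choice of mast to receive the last edge).
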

	\begin{proof}
		This follows directly from the fact that the tetrahedra in the tessellation do not intersect.
	\end{proof}

	\begin{remark}
		As we see in the case of existence there are exactly three entirely three-dimensional Farey summation continued fractions representing $p$. The lengths of these continued fractions are three consecutive integers. This is similar to the classical two-dimensional case. In fact a similar statement holds in the multidimensional case (we omit it here).
	\end{remark}

	\subsection{Three-dimensional frieze relation}
	\label{subsection: 3d frieze and ptolemy}
	Frieze patterns are tables of numbers that encode the combinatorics of polygon triangulations.
	In this subsection we say a few words on a construction similar to that of frieze patterns in the classical case. 
	Here we restrict ourselves to the case of three-dimensional Farey summation continued fractions that do not have zero elements.
	We are dealing with the three-dimensional part of the continued fraction only. 
	We also consider only the polyhedra that admit path triangulations.

	\subsubsection{Definition of $\lambda$-lengths}

	The tetrahedra in a prismatic diagram defined by a vector $v$ have a natural ordering. 
	This ordering is defined by the intersection of these tetrahedra with the ray starting at the centre of mass of the deck $R$ and in the direction of $v$. 
	We say that the number of tetrahedra in the prismatic diagram is the \textit{length} of the prismatic diagram.
	
	\begin{definition}
		We say that a polyhedron is an $(i,j)$-\textit{slice} of a prismatic diagram of length $n$ if it is obtained from the diagram by removing all $i-1$ simplices before the $i$-th yard and all $n-j$ simplices after the $j$-th yard. 
		The $i$-th and $j$-th yards are called respectively the \textit{deck} and \textit{nest} of the $(i,j)$-slice.
	\end{definition}
	
	\begin{definition}
		Let $P$ be a prismatic diagram.
		Consider two vertices $(v,w)$ in $P$.
		We say that the {\it geodesic} $\ell(v,w)$ between two vertices of $P$ is the smallest $(i,j)$-slice that contains these two points. 
	\end{definition}
	
	\begin{definition}
		Since $\ell(v,w)$ is a path-polyhedron it naturally defines a Farey summation continued fraction $F(v,w)$ (as discussed above).
		We say that the continuant of this continued fraction is the \textit{$\lambda$-length} of the geodesic $\lambda(v,w)$. 
		We set $\lambda(v,v)=0$.
		We also set $\lambda(v,w)=1$ if $v\ne w$ and they are connected by a yard edge.
	\end{definition}
	
	\begin{remark}
		In two-dimensions the lambda length between two vertices $(p_1,q_1)$ and $(p_1,q_1)$ of a Farey polygon is precisely R.~Penner's lambda length of horocycles in the upper half plane with centres $\frac{p_1}{q_1}$ and $\frac{p_2}{q_2}$. 
		See, for instance,~{\cite{Penner2023}}.
	\end{remark}
	
	\subsubsection{Ptolemy relation}
	
	\begin{figure}[t]
		\centering
		\includegraphics[height=4.5cm]{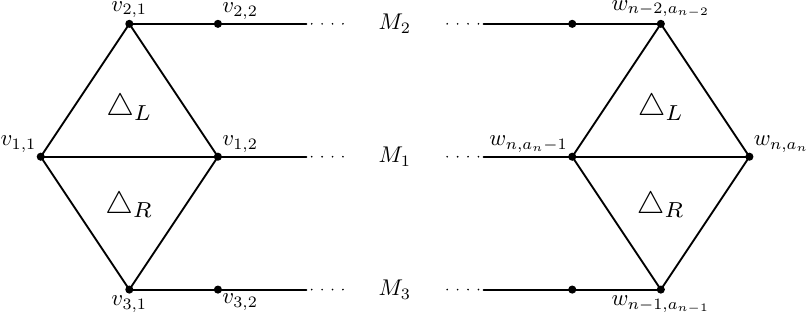}
		\caption{Left ($\triangle_L$) and right ($\triangle_R$) triangles at the beginning and end of the $(1,n)$-slice.}
		\label{figure: L-R triangles}
	\end{figure}
	
	Consider the triangulation of the boundary of the prismatic diagram.
	
	\begin{definition}
		We say that a face $F$ is \textit{nice} if it is not the deck and does not contain a vertex of the nest. 
	\end{definition}
	
	\begin{definition}
		We call a face of a prismatic diagram a \textit{left triangle} ($\triangle_L$) if two vertices are on mast $m$ and one vertex is on mast $m+1\mod3$.
		Similarly we call a face a \textit{right triangle} ($\triangle_R$)  if two vertices are on mast $m$ and one vertex is on mast $m-1\mod3$.
		See Figure~\ref{figure: L-R triangles} for reference.
	\end{definition}

	The \textit{Ptolemy relation} for prismatic diagrams in two-dimensions is defined for pairs of edges of the diagram (cf.~\cite{moriergenoud2019}): if $(a_1,\ldots,a_n)$ is the exponential form of the LR sequence of the minimal polygon containing two edges 
	in the prismatic diagram then
	the Ptolemy relation is the determinant equation
	\[
	\det\left(
	\begin{matrix}
		K_n(a_1,\ldots,a_n) & K_n(a_1,\ldots,a_n-1) \\
		K_n(a_1-1,\ldots,a_n) & K_n(a_1-1,\ldots,a_n-1) \\
	\end{matrix}
	\right)
	=\begin{cases}
		1, & n\text{ even};\\
		-1, & n\text{ odd},
	\end{cases}
	\]
	where $K$ here denotes the two-dimensional continuant.
	The values in the matrix are the $\lambda$-lengths of pairs of vertices of the distinct edges. 
	
	\begin{remark}
		Note that the values of the above matrix are exactly elements of the frieze pattern corresponding to the triangulated polygon.
		The $-1$ occurs precisely when the initial vertex and pennant of the $(i,j)$-th slice are on the same mast.
		This is explained by the difference between our ordering of vertices and the ordering in the classical correspondence between triangulated polygons and frieze patterns (see, for example,~\cite{moriergenoud2019}).
	\end{remark}

	In the three-dimensional case we define the Ptolemy relation for pairs of nice triangles (the faces on the boundary of the prismatic diagram).

	\begin{definition}
		Consider an ordered pair of nice triangles $V=v_1v_2v_3$ and $W=w_1w_2w_3$ not connected by a yard:
		$V$ is ordered clockwise and $W$ is ordered counter-clockwise.
		Let $\Lambda(v_i,w_j)$ denote the $3\times3$ matrix of lambda lengths $\lambda(v_j,w_j)$.
		Then the \textit{Ptolemy constant} for the pair $(V,W)$ is $\det(\Lambda(v_i,w_j))$. 
		Denote it by $P(V,W)$.
	\end{definition}
	\begin{definition}
		We label the vertices of the prismatic diagram in the following way: for the exponential LR sequence $(a_1,\ldots,a_n)$ we label the vertices on the masts by $v_{i,j}$, where $0\leq j< a_i$, and $v_{i,j}$ denotes the $j$-th vertex on the mast segment associated to $a_i$ (as in Definition~\ref{farey continued fractions}).
		
	\end{definition}
	
	Note that vertices connecting the two mast segments $i$ and $j$ are labelled twice, as $v_{i,a_i}$ and $v_{j,1}$.
	We show how to use continuants to calculate lambda lengths.
	
	\begin{proposition}
		Let $(a_1,\ldots,a_n)$ be the exponential form of the LR sequence for $\ell(v_{1,1},w_{n,a_n})$.
		Then for the left and right triangles in $\ell(v_{1,1},w_{n,a_n})$ with vertices $v_{1,1}$ and $w_{n,a_n}$ we have
		\[
		\lambda(v_{i,j},w_{k,l})=
		\begin{cases}
			K(a_i+1-j,a_{i+1},\ldots,a_{k-1},a_k),\ & k\in\{n-1,n-2\},\\
			K(a_i+1-j,a_{i+1},\ldots,a_{k-1},l),\ & k=n.
		\end{cases}
		\]
		\qed
	\end{proposition}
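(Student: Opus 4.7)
The plan is to unravel the definition of $\lambda$-length and identify the Farey summation continued fraction attached to the minimal slice $\ell(v_{i,j},w_{k,l})$. By definition, $\lambda(v_{i,j},w_{k,l})$ is the Farey continuant of the LR-sequence (in exponential form) of this slice, so once the sequence is determined the claimed formula follows at once from Proposition~\ref{prop: cf and continuants} together with the iterative definition of $K$.

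First I would compute the LR-sequence of $\ell(v_{i,j},w_{k,l})$ in terms of $(a_1,\ldots,a_n)$ and the four indices. Starting from $v_{i,j}$ inside the $i$-th mast segment, only the tail of that segment survives in the slice; using the labelling convention $v_{i,a_i}=v_{i+1,1}$, a direct count gives length $a_i+1-j$. The intermediate segments $a_{i+1},\ldots,a_{k-1}$ lie entirely inside the slice and appear unchanged. The final segment is the only delicate point. When $k=n$, the slice truncates at position $l$ along the last segment, contributing the entry $l$. When $k\in\{n-1,n-2\}$, the vertex $w_{k,l}$ is a vertex of the right triangle $\triangle_R$ sitting at the end of $\ell(v_{1,1},w_{n,a_n})$; this right triangle spans the last two or three mast segments, so the minimal slice containing both $v_{i,j}$ and $w_{k,l}$ is forced to swallow the whole $k$-th segment, contributing the entry $a_k$.

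With the LR-sequence $(a_i+1-j,\, a_{i+1},\ldots,a_{k-1},\, \ast)$ in hand, with $\ast=a_k$ or $\ast=l$ according to the case, the identity $\lambda(v_{i,j},w_{k,l})=K(a_i+1-j,\ldots,\ast)$ is immediate from the definition of $\lambda$-length. I expect the main obstacle to be the combinatorial argument in the right-triangle case $k\in\{n-1,n-2\}$: one must verify that each vertex of $\triangle_R$ other than $w_{n,a_n}$ sits at a mast-segment corner for which the minimal enclosing slice automatically engulfs the entire $k$-th segment. A clean way to handle this is to combine Proposition~\ref{masts-decks-prop}, which constrains how masts run between deck and nest and where they are permitted to terminate, with a direct local inspection of the final few yards of $\ell(v_{1,1},w_{n,a_n})$ as catalogued in Table~\ref{duality-table}; this pins down the precise positions of the vertices of $\triangle_R$ and confirms that the truncation of the LR-sequence happens only at $k=n$.
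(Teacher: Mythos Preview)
The paper offers no proof for this proposition: it is stated with an immediate \qed, so the authors regard it as a direct unpacking of the definitions of $\lambda$-length, geodesic slice, and the vertex labelling $v_{i,j}$. Your proposal follows exactly that route---identify the exponential LR-sequence of the minimal slice $\ell(v_{i,j},w_{k,l})$ as $(a_i{+}1{-}j,\,a_{i+1},\ldots,a_{k-1},\,\ast)$ with $\ast\in\{a_k,l\}$ according to the case, then apply the definition of $\lambda$-length as the Farey continuant of that sequence---and this is precisely what the paper has in mind.

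One small comment: your final paragraph overcomplicates the case $k\in\{n-1,n-2\}$. You do not need Proposition~\ref{masts-decks-prop} or Table~\ref{duality-table} (which concerns integer sines in LLS-sequences and is not really relevant here). The point is simpler: the vertices of $\triangle_R$ (respectively $\triangle_L$) at the far end are $w_{n,a_n}$, $w_{n,a_n-1}$, and $w_{n-1,a_{n-1}}$ (respectively $w_{n-2,a_{n-2}}$), as one reads off directly from the prismatic diagram and as is confirmed by the explicit Ptolemy matrices written out in the proof of Theorem~\ref{theorem: ptolemy in 3d}. Since that third vertex sits at the \emph{end} of its mast segment, the slice from $v_{i,j}$ to it contains the whole of segment $k$, giving the last entry $a_k$ rather than a truncated value. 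No further combinatorial input is required.
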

	
	Now we generalise the famous Ptolemy relation to the multidimensional integer setting.
	
	\begin{theorem}{\bf (Integer Ptolemy relation.)}
		\label{theorem: ptolemy in 3d}
		Let $V$ and $W$ be nice triangles 
		such that the slices between each pair of vertices, one from $V$ and one from $W$, contain at least one tetrahedron.
		Then we have
		\[
		P(V,W)=
		\begin{cases}
			1,\ &\text{if }V\text{ is a right triangle},\\
			0,\ &\text{if }V\text{ is a left triangle}.
		\end{cases}
		\]
	\end{theorem}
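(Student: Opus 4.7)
The plan is to express the $3\times 3$ matrix $(\lambda(v_i,w_j))$ as a product of matrices drawn from the continuant generators $S(a)$ introduced in Subsection~\ref{subsection: continuants}, and then exploit the elementary fact $\det S(a)=1$ to compute the Ptolemy constant.

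First I would fix notation: let $(a_1,\ldots,a_n)$ be the exponential LR sequence of the minimal slice $\ell(V,W)$ containing the two triangles. By the proposition preceding the theorem, every $\lambda(v_i,w_j)$ is a Farey continuant $K$ of a contiguous substring of $(a_1,\ldots,a_n)$ with the first and last arguments shifted according to the positions of $v_i$ and $w_j$ on their respective mast segments. Using Proposition~\ref{prop: cf and continuants}, the three columns indexed by the vertices of $W$ are then (up to reindexing) three consecutive columns of the partial quotient matrix $M_n(a_1,\ldots,a_n)=\prod_i S(a_i)$, while an analogous application of the anti-recursive relation $M_n = S(a_1) M_{n-1}(a_2,\ldots,a_n)$ identifies the three rows indexed by the vertices of $V$ with a similar block obtained from the opposite end.

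Combining these two observations I would write
\[
\bigl(\lambda(v_i,w_j)\bigr) \;=\; L(V)\cdot M_n(a_1,\ldots,a_n)\cdot R(W),
\]
where $L(V)$ and $R(W)$ are explicit elementary matrices determined by the intra-mast positions of the vertices and by the prescribed orientations (clockwise for $V$, counter-clockwise for $W$). When $V$ is a right triangle both factors reduce to unimodular shifts (each expressible as a single $S(a)$-factor composed with a transposition compatible with the orientation), so that $\det L(V)\cdot\det M_n\cdot \det R(W)=1$, yielding $P(V,W)=1$. When $V$ is a left triangle, two of its vertices lie on a common mast segment at consecutive labelled positions $j$ and $j+1$; then, using the recursion $K_m(x_1,x_2,\ldots)=x_1K_{m-1}(x_2,\ldots)+x_2K_{m-2}(x_3,\ldots)+K_{m-3}(x_4,\ldots)$ applied to the first continuant argument, the difference of the two corresponding rows of $(\lambda(v_i,w_j))$ becomes a scalar multiple (independent of the $w_j$) of the third row, the rows are linearly dependent, and $P(V,W)=0$.

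The main obstacle I expect is the combinatorial bookkeeping in identifying $L(V)$ and $R(W)$ when the triangles sit exactly at positions where the effective mast dimension changes or when some $a_i$ vanishes (degenerate mast segments permitted by the Farey summation continued fraction). To sidestep much of this, I would organise the argument inductively on $n$: shortening the slice by one tetrahedron at either end corresponds to right- or left-multiplying by $S(a)^{\pm 1}$, which has unit determinant and therefore preserves the Ptolemy identity, while the base case is a single tetrahedron and is checked directly. Combined with the structural linear-dependence argument above for left triangles, this should give a uniform proof covering all admissible LR sequences and all nice triangle pairs satisfying the non-degeneracy hypothesis of the theorem.
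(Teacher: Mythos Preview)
Your approach is essentially the same as the paper's. The paper writes the $\lambda$-matrix as $M_{\text{Row}}\cdot M_n(a_1,\ldots,a_n)\cdot M_{\text{Col}}$ with explicit unimodular $3\times 3$ matrices $M_{\text{Row}},M_{\text{Col}}$ (your abstract $L(V),R(W)$) for the right-triangle case, and for the left-triangle case uses the continuant recursion to show one row equals the sum of the other two, exactly as you propose.

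Two small remarks. First, your worry about zero $a_i$ and dimension drops is moot: the subsection explicitly restricts to continued fractions with no zero elements and to the three-dimensional part only, so the paper simply writes down the correction matrices and checks their determinants directly; the inductive alternative you sketch is therefore unnecessary. Second, your left-triangle argument as stated (``two vertices at consecutive positions on a common mast segment, so the row difference is a scalar multiple of the third row'') does not yet distinguish left from right, since right triangles also have two consecutive vertices on a common mast. The point, made explicit in the paper, is that the row difference produced by the recursion is the continuant row attached to the \emph{first} vertex of the \emph{next} mast segment; this matches the third vertex of $V$ precisely when $V$ is a left triangle (third vertex on mast $m{+}1$) and not when $V$ is a right triangle (third vertex on mast $m{-}1$). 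Making this explicit closes the argument.
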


	\begin{proof}
		Let us first consider the case $\Lambda(\triangle_R,\triangle_R)$.
		Assume that the exponential form of the LR sequence for the polyhedron defined by the two triangles is $(a_1,\ldots,a_n)$, so the determinant matrix of $\lambda$-lengths is
		\[
		\Lambda(\triangle_R,\triangle_R)=
		\left(
		\begin{matrix}
			K(a_3,\ldots,a_n) & K(a_3,\ldots,a_n-1) & K(a_3,\ldots,a_{n-1}) \\
			K(a_1,\ldots,a_n) & K(a_1,\ldots,a_n-1) & K(a_1,\ldots,a_{n-1}) \\
			K(a_1-1,\ldots,a_n) & K(a_1-1,\ldots,a_n-1) & K(a_1-1,\ldots,a_{n-1}) \\
		\end{matrix}
		\right).
		\]
		It follows from the definition that the matrix $M_n(a_1,\ldots,a_n)$ has determinant $1$.
		We define two matrices $M_{\text{Row}}$ and $M_{\text{Col}}$ by
		\[
		M_{\text{Row}}=
		\left(
		\begin{matrix}
			0 & 1 & -a_1 \\
			1 & 0 & 0 \\
			1 & 0 & -1
		\end{matrix}
		\right),
		\qquad
		M_{\text{Col}}=
		\left(
		\begin{matrix}
			1 & 1 & 0 \\
			0 & -1 & 1 \\
			0 & -1 & 0
		\end{matrix}
		\right).
		\]
		Then we have that 
		\[
		P(\triangle_R,\triangle_R)=M_{\text{Row}}\cdot M_n(a_1,\ldots,a_n)\cdot M_{\text{Col}}.
		\]
		Since $\det(M_{\text{Row}})=\det(M_{\text{Col}})=1$ we have that $\det\big(\Lambda(\triangle_R,\triangle_R)\big)=1$.
		A similar computation shows that $\det\big(\Lambda(\triangle_R,\triangle_L)\big)=1$.
		Consider now the case 
		\[
		\Lambda(\triangle_L,\triangle_R)
		=
		\left(
		\begin{matrix}
			K(a_1,\ldots,a_n) & K(a_1,\ldots,a_n-1) & K(a_1,\ldots,a_{n-1}) \\
			K(a_2,\ldots,a_n) & K(a_2,\ldots,a_n-1) & K(a_2,\ldots,a_{n-1}) \\
			K(a_1-1,\ldots,a_n) & K(a_1-1,\ldots,a_n-1) & K(a_1-1,\ldots,a_{n-1}) \\
		\end{matrix}
		\right).
		\]
		Using the recursion formula of continuants we see that $\text{Row }1=\text{Row }2+\text{Row }3$, and hence $\det\big(\Lambda(\triangle_L,\triangle_R)\big)=0$.
		Similarly we observe that $\det\big(\Lambda(\triangle_L,\triangle_L)\big)=0$.
	\end{proof}
	
	\begin{remark}
		It is not clear what happens when there are zero elements.
	\end{remark}

	\subsubsection{Frieze pattern in higher-dimensions}
	
	We conclude this subsection with the following topological definition of frieze patterns in three-dimensions.
	
	\begin{definition}
		Consider a Farey polyhedron $P$ and with prismatic diagram $D$ and let $V(D)$ be the set of vertices of $D$. Consider the function
		\[
		\lambda: V(D)\times V(D) \to \mathbb Z, 
		\]
		whose values on two vertices is the $\lambda$-length between the corresponding vertices in the Farey polyhedron.
		We call the collection $(\partial D\times \partial D,\lambda)$ the {\it frieze pattern} associated to the given Farey polyhedron.
	\end{definition}

	As we have discussed above the frieze pattern satisfies the Ptolemy relation for pairs of faces in the prismatic diagram. 
	Let us illustrate this by the following example.
	
	\begin{example}
		\begin{figure}[t]
			\centering
			\includegraphics[width=0.9\textwidth]{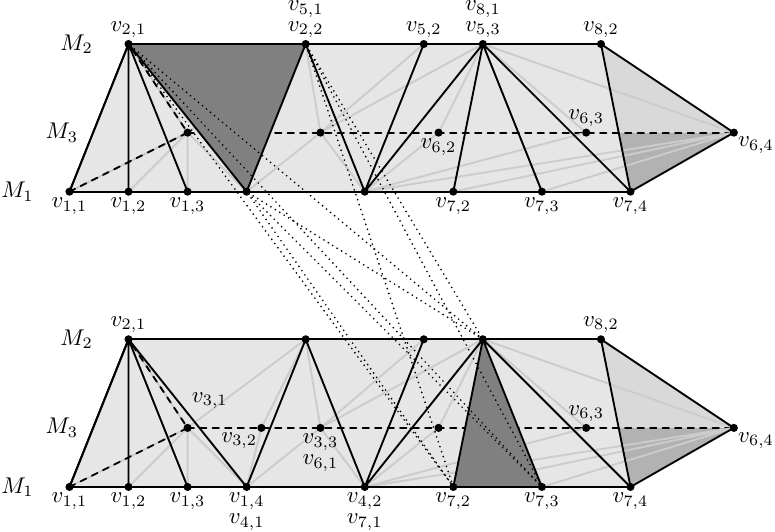}
			\caption{Prismatic diagram for $[3;1:2:1:2:3:3:1]$.}
			\label{figure: face to face}
		\end{figure}
		Consider a continued fraction
		\[
		[3;1:2:1:2:3:3:1]
		\]
		with marked faces $V=v_{2,1}v_{2,2}v_{1,4}$ and $W=v_{7,2}v_{7,3}v_{5,3}$ as in Figure~\ref{figure: face to face}.
		The geodesic $\ell(V,W)$ has exponential LR sequence $(1,2,1,2,3,2)$.
		The pairwise vertices are as follows
		\[
		\begin{aligned}
			P(V,W)&=\left(
			\begin{matrix}
				K(a_1,\ldots,a_n) & K(a_1,\ldots,a_{n-2}) & K(a_1,\ldots,a_n-1) \\
				K(a_1-1,\ldots,a_n) & K(a_1-1,\ldots,a_{n-2}) & K(a_1-1,\ldots,a_n-1) \\
				K(a_3,\ldots,a_n) & K(a_3,\ldots,a_{n-2}) & K(a_3,\ldots,a_n-1) \\
			\end{matrix}
			\right)\\
			&=\left(
			\begin{matrix}
				K(1,2,1,2,3,2) & K(1,2,1,2) & K(1,2,1,2,3,1)\\
				K(0,2,1,2,3,2) & K(0,2,1,2) & K(0,2,1,2,3,1)\\
				K(1,2,3,2) & K(1,2) & K(1,2,3,1)\\
			\end{matrix}
			\right)\\
			&=\left(
			\begin{matrix}
				218 & 21 & 112\\ 
				105 & 10 & 54\\
				41 & 4 & 21
			\end{matrix}
			\right).
		\end{aligned}
		\]
		In particular $\det P(V,W)=1$.
		
	\end{example}

	\begin{remark}
		Informally speaking there is a certain similarity between the structure of multidimensional frieze patterns and Voronoi continued fractions introduced in ~\cite{voronoi1896}: here one constructs a special polyhedron and writes elementary matrix transitions between its faces. The construction of the Voronoi polyhedra are substantially different. The question of establishing the link between Voronoi continued fractions and multidimensional frieze patterns is open.  
	\end{remark}

	\section{Suggestions for further work}
	\label{Plans for further work}
	
	We conjecture that the construction in this paper has a natural extension to
	other subtractive algorithms
	and the multidimensional subtractive algorithms as well. 
	From this we propose the following open problems for further study.
	
	\vspace{2mm}

	\begin{problem}
		Describe the edge sail duality in the higher-dimensional case ($n\ge 4$). Here corresponding tables similar to Table~\ref{duality-table} in the three-dimensional case may be built.   
	\end{problem}

	\begin{problem}
		What is the analogue of nose stretching for other algorithms. What are their geometric properties.    
	\end{problem}

	\begin{problem}
		Extend the theory of Farey simplices and their sails to other subtractive algorithms.
	\end{problem}
	
	\begin{problem}
		Compare the properties of different subtractive algorithms and their characteristic features.
	\end{problem}
	
	\begin{problem}
		Study the frieze pattern defined by prismatic diagrams in three-dimensions and above.
	\end{problem}

	\vspace{2mm}
	\noindent
	{\bf Acknowledgment:} The second author is supported by EPSRC grant EP/W002817/1.
	
	\printbibliography
	
	\bigskip 
	
	\noindent
	\footnotesize \textbf{Authors' addresses:}
	
	\bigskip

	\noindent{Department of Mathematical Sciences, University of Liverpool,
		UK} 
	\hfill \texttt{karpenk@liverpool.ac.uk}
	\medskip  
	
	\noindent{Department of Mathematical Sciences, University of Liverpool,
		UK} 
	\hfill \texttt{m.van-son@liverpool.ac.uk}
	
\end{document}